\newtheorem{theorem}{Theorem}[section]
\newtheorem{lemma}[theorem]{Lemma}
\newtheorem{remark}{Remark}[section]
\newtheorem{corollary}{Corollary}[section]
\numberwithin{equation}{section} 
\title{Discontinuous Galerkin methods for 3D--1D systems  }
\author{Rami Masri$^1$, Miroslav Kuchta$^1$, and  Beatrice Riviere$^2$}
\email{rami@simula.no, miroslav@simula.no, riviere@rice.edu}
\address{$^1$Department of Numerical Analysis and Scientific Computing, Simula Research Laboratory, Oslo, 0164 Norway. RM gratefully acknowledges support from the Research Council of Norway (RCN) via FRIPRO grant agreement 324239 (EMIx).
MK gratefully acknowledges support from the RCN grant 30336.
}
\address{$^2$Department of Computational Applied Mathematics and Operations Research, Rice University, Houston, TX 77005, USA. BR is partially funded by NSF-DMS 2111459.}
\date{ \today }
\newcommand{\DG}{\mathrm{DG}}
\newcommand{\meshg}{\mathcal{T}_{\Omega}^h}
\newcommand{\meshl}{\mathcal{T}_{\Lambda}^h}
\newcommand{\meshb}{\mathcal{T}_{B}^h}
\newcommand{\bm}[1]{\boldsymbol{#1}}
\newcommand{\dgg}{\mathbb{V}_h^{\Omega}} 
\newcommand{\dgl}{\mathbb{V}_h^{\Lambda}} 
\newcommand{\cgg}{\mathbb{V}_{h,c}^{\Omega}} 
\newcommand{\cgl}{\mathbb{V}_{h,c}^{\Lambda}} 
\newcommand{\ds}{\,\mathrm{d}s}
\newcommand{\R}{\mathbb{R}}
\newcommand{\diam}{\mathrm{diam}}
\newcommand{\Bk}{\color{black}}
\begin{document}

\maketitle

\begin{abstract}
  We propose and analyze discontinuous Galerkin (dG)  approximations to 3D-1D coupled systems which model diffusion in a 3D domain containing a small inclusion reduced to  its 1D centerline.  Convergence to weak solutions of a steady state problem is established via deriving a posteriori error estimates and bounds on residuals defined with suitable lift operators. For the time dependent problem, a backward Euler dG formulation is also presented and analysed. Further, we propose a dG method for networks embedded in 3D domains, which is, up to jump terms, locally mass conservative on bifurcation points. Numerical examples in idealized geometries portray our theoretical findings, and simulations in realistic 1D networks show the robustness of our method.

   \vspace{1em}
 \smallskip
  \noindent \textit{Key words}.  3D-1D coupled models; discontinuous Galerkin methods; 1D vessel networks 

  \smallskip 
  \noindent \textit{MSC codes.} 65N30, 65M60.
\end{abstract}

\section{Introduction}

Modeling physiological processes involving the flow and transport within a complex network of vessel-like structures embedded in a 3D domain is crucial. Examples of such processes include drug transport in vascularized tissue \cite{rohan2018modeling,cattaneo2014computational} and solute clearance through the lymphatic vessels in the body \cite{possenti2019numerical} and through the glymphatic system of the brain \cite{masri2023modelling,vinje2021brain}. This modeling setup has applications not only in physiology but also spans  areas such as geosciences \cite{gjerde2020singularity,malenica2018groundwater}.  

To account for a complex network of vessels that typically have a small diameter compared to a surrounding domain, topological model reduction techniques have been proposed \cite{d2008coupling,laurino2019derivation}. Such models reduce the equations posed in 3D vessels to 1D equations posed on their centerlines. Further, these 1D equations are suitably coupled to extended 3D equations in the surrounding. Thereby, 3D--1D models reduce the computational cost while providing a reliable approximation to the full dimensional system. Bounds on the modeling error induced by such a derivation in terms of the vessel diameter are derived for the time dependent convection diffusion 3D-1D problem in \cite{masri2023modelling}, for the steady state diffusion 3D-1D problem in \cite{laurino2019derivation}, and for the steady state 2D-0D problem in \cite{koppl2018mathematical}. 
 
We remark that the 3D-1D model derived in \cite{laurino2019derivation} and further extended in \cite{masri2023modelling} naturally uses the lateral average as a way to restrict 3D functions to  1D inclusions. This differs from the models presented in \cite{d2007multiscale,d2008coupling}, where 1D traces of 3D functions are used; therefore, the functional setting involves special weighted Hilbert spaces. The latter models can be generally viewed as elliptic problems with Dirac line sources. Several finite element schemes have been proposed and analyzed for this class of problems. In addition to the continuous Galerkin method introduced in \cite{d2012finite} and further analyzed in \cite{koppl2014optimal,drelichman2020weighted}, we mention the singularity removal method \cite{gjerde2020singularity}, the mixed approach \cite{gjerde2021mixed}, the interior penalty dG method \cite{masri2023discontinuous} or the Lagrange multiplier approach \cite{doi:10.1137/20M1329664}. It is worth noting that the  papers \cite{gjerde2020singularity, drelichman2020weighted,gjerde2021mixed,masri2023discontinuous} only analyze the 3D problem and assume a given 1D source term. 

 For the 3D-1D problem where the restriction operator is realized via lateral averages,  the continuous Galerkin method is analyzed in \cite{laurino2019derivation}, providing error estimates in  energy norms. To the best of our knowledge, a discontinuous Galerkin method for the coupled 3D--1D system and its analysis are novel. DG approximations have several favorable features such as the  local mass conservation property \cite[Section 2.7.3]{riviere2008discontinuous}. In addition, with dG approximations, local mesh refinement and local high order approximation are easily handled since there are no continuity requirements.  The analysis of dG for the coupled 3D--1D problem requires non--standard arguments as the strong consistency of the method cannot be assumed. This stems from the observation that the 3D solution does not belong to $H^{3/2+\eta}(\Omega)$ for any positive $\eta$, the natural Sobolev space for the interior penalty dG bilinear form. 

We now summarize the main contributions of the paper and give an outline of the contents. 
\begin{itemize}
\item We propose an interior penalty dG method for the coupled 3D--1D problem, and we prove convergence to weak solutions. The main result is given in Theorem \ref{thm:main_result}. 
\item We derive error estimates for regular meshes in Corollary \ref{cor:first_err_estimate} and for graded meshes in Corollary \ref{cor:second_err_estimate}. The second estimate shows that if the mesh is resolved near the boundary of the inclusion, then almost optimal error rates are recovered. 
\item We analyze a backward Euler dG discretization for the time dependent problem by introducing a suitable interpolant which is based on the elliptic projection. The main result is in Theorem \ref{thm:time_dep_estimate}. 
\item For vessel networks embedded in a 3D domain, we propose a dG method with a hybridization technique on bifurcation points. Up to jump terms, this method preserves conservation of mass on such junctions, see Section \ref{sec:network}. We show the well--posedness of this dG formulation. 
\end{itemize}
The rest of this article is organized as follows. Sections \ref{sec:model_pb} and \ref{sec:dG} introduce the model problem and the dG approximation respectively. The error analysis for the steady state problem is included in Section \ref{sec:error}. We analyze a backward Euler dG method for the transient 3D--1D model in Section \ref{sec:time_dependent}. The case of a vessel network inside a 3D domain is studied in Section \ref{sec:network}. In Section \ref{sec:numerics}, we provide numerical examples for manufactured solutions in a 3D--1D setting, for 1D vessel networks, and for realistic 1D networks in 3D tissue.  Conclusions follow in Section \ref{sec:conclusion}.  
\section{Model problem }\label{sec:model_pb}
\subsection{Notation} Given an open domain $O \subset \R^d$, $d \in \{1,2,3\}$, the usual $L^2$ inner product is denoted by  $(f,g)_{O}$ for given real functions $f$ and $g$.  Let  $L^2(O)$ be the Hilbert space with inner product $(\cdot, \cdot)_{O}$ and the usual induced norm $\|\cdot\|_{L^2(O)}$. We  drop the subscript when $O= \Omega$ and let $\|\cdot \|_{L^2(\Omega)} = \|\cdot\|$ and $(f,g)_\Omega  = (f,g)$. Recall the notation of the standard Sobolev spaces $W^{m,p}(O)$ and $H^{m}(O) = W^{m,2}(O)$ for $ m \in \mathbb{N}$ and $ 1 \leq p \leq \infty$. For a given weight $w \in L^{\infty}(O)$ and $w > 0$ a.e. in $O$, the weighted $L^2$ inner product is given  by $(f,g)_{L^2_\omega(O)} = (f,wg)_O$ with  the respective weighted $L_\omega^2$ space:
\begin{equation} \label{eq:weighted_inner_product}
\|f\|_{L^2_w(O)} =\|w^{1/2}f\|_{L^2(O)}, \,\,  L^2_{w}(O) = \{f:O \rightarrow \R \,\,  \vert \,\, \|f\|_{L_w^2(O)} < \infty\}. 
\end{equation}
Similarly, the weighted Hilbert space $H^1_w(O)$ is given by   
\begin{equation}
    H^1_{w}(O) = \{f \in L^2_{w}(O) \,\, \vert \,\,  \|\nabla f\|_{L^2_w(O)} <  \infty\}, 
\end{equation}
where the weighted inner product and norm are
\begin{equation}
(f,g)_{H^1_{w}(O)} = (f,g)_{L^2_w(O)} + (\nabla f, \nabla g)_{L^2_{w}(O)}, \,\,\,  \|f\|_{H^1_{w}(O)}^2 = \|f\|_{L^2_w(O)}^2 + \|\nabla f \|_{L^2_w(O)}^2. 
\end{equation}
We omit the subscript/weight $w$ when $w = 1$. 
Throughout the paper, we denote by $C$ a generic constant independent of mesh parameters. We use the standard notation $A \lesssim B$ for $A \leq C \, B$ and  $A \approx B$ for $A \leq C \, B$ and $B \leq C \, A$. 
\subsection{The 3D-1D model}
Let $\Omega \subset \mathbb{R}^3$ be a bounded domain with a one dimensional inclusion $\Lambda$. We assume that $\Lambda$ is parametrized  by $\bm \lambda(s), s \in [0,L]$ and is strictly included in $\Omega$, $\bm \lambda$ is $C^2$ regular, and (for simplicity) $\|\bm \lambda' (s) \| = 1$ so that the arc length and $s$ coincide.  We further define $B_{\Lambda}$ as a generalized cylinder with centerline $\Lambda$. The boundary of $B_{\Lambda}$ will be denoted by $\Gamma$. See Figure \ref{fig:domains} for an illustration of the considered geometry.
\begin{figure}[ht]
    \centering
    \includegraphics[angle=-90,scale = 0.7]{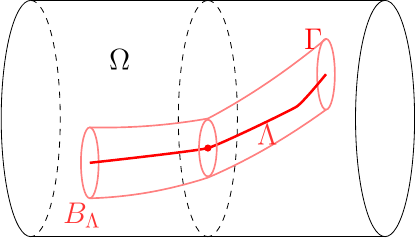}
    \caption{Illustration of the considered geometry}
    \label{fig:domains}
\end{figure}
A cross-section of $B_{\Lambda}$ at $s\in [0,L]$ is denoted by $\Theta(s)$ with area $A(s)$ and perimeter $P(s)$.  We assume that there are positive constants $a_0, a_1$ such that $a_0\leq A(s)+P(s) \leq a_1$ for all $s$.  We also assume that $A$ belongs to $\mathcal{C}^1([0,L])$. For a  function $u \in L^1(\partial \Theta(s))$, we define the lateral average $\bar{u}$ as 
\begin{equation}
  \overline{u}(s)= \frac{1}{P(s)}  \int_{\partial \Theta(s)} u.  \label{eq:average_operator}
\end{equation} 
The 3D-1D model that we  consider results from a reduction of a 3D-3D model in $\Omega \backslash B_{\Lambda}$ and in $B_{\Lambda}$ with Robin type interface conditions on $\Gamma$. This condition models the membrane $\Gamma$ as semi-permeable with permeability constant $\xi > 0$. Averaging the equations in $B_{\Lambda}$ and formally extending the equations in $\Omega \backslash B_{\Lambda}$, one obtains the following coupled system
\begin{alignat}{2}
- \Delta u + \xi (\overline{u} - \hat u ) \delta_{\Gamma} &= f, && \,\,\,  \mathrm{in} \,\,\, \Omega, \label{eq:pde_1}  \\
-\mathrm d_s (A \, \mathrm{d}_s \hat u ) + P \, \xi (\hat u -\overline{u}) & = A \, \hat f, && \,\,\, \mathrm{in} \,\,\, \Lambda. \label{eq:pde_2} 
\end{alignat}
We refer to \cite{laurino2019derivation,masri2023modelling} for  details on the derivation and on the model error analysis.  
The source terms $f \in L^2(\Omega)$ and $\hat f \in L^2_A(\Lambda)$ are given. The above equations are to be understood in the weak sense, and the functional $(\overline{u} - \hat u) \delta_{\Gamma}$ is defined over $H^1(\Omega)$ as
\begin{equation}
(\overline{u} - \hat u ) \delta_{\Gamma}(v) 
=  \int_{\Lambda} P (\overline{u} - \hat u ) \overline v , \quad \forall v \in H^1(\Omega). 
\end{equation}
The above functional is well-defined since an application of  Cauchy-Schwarz's inequality and trace theorem yields
\begin{equation}
\|\overline{v}\|_{L^2_P(\Lambda)} \leq \|v\|_{L^2(\Gamma)} \lesssim \|v\|_{H^1(\Omega)}, \quad \forall v \in H^1(\Omega). \label{eq:trace_estimate_avg}
\end{equation}
The system \eqref{eq:pde_1}--\eqref{eq:pde_2} is complemented by the following boundary conditions.
\begin{equation}
 u  = 0 ,  \,\,\, \mathrm{on} \,\, \partial \Omega,   \quad \mathrm{and} \quad 
  A \, \mathrm{d}_s \hat u  = 0,  \,\,\, \mathrm{on} \,\, s =\{0,L\}. 
\end{equation}
To introduce the weak formulation of \eqref{eq:pde_1}--\eqref{eq:pde_2}, we define the following bilinear forms. 
\begin{alignat}{2}
  a(u,v)  &= ( \nabla u ,  \nabla v ), && \quad \forall u, v \in H^1(\Omega), \\ 
  a_{\Lambda} (\hat u, \hat v)  &=  ( \mathrm{d}_s \hat u,  \mathrm d_s  \hat v)_{L^2_A(\Lambda)} ,&& \quad \forall \hat u, \hat v \in H_{A}^1(\Lambda), \\ 
  b_{\Lambda} ( \hat v,\hat w ) &=   (\xi  \hat v, \hat w  )_{L^2_P(\Lambda)},  && \quad  \forall  \hat v, \hat w  \in L^2_{P}(\Lambda). 
\end{alignat}
The weak formulation of the coupled 3D-1D problem then reads \cite{laurino2019derivation}: Find $\bm{u} = (u,\hat u) \in H_0^1(\Omega)\times H_A^1(\Lambda)$ such that 
\begin{alignat}{2}
a(u,v) + b_{\Lambda}(\overline{u} - \hat u , \overline{ v}) &= (f,v)_{\Omega}, && \quad \forall  v \in H_0^1(\Omega),   \\
a_{\Lambda} ( \hat u, \hat v ) + b_{\Lambda}( \hat u - \overline u , \hat v ) &= (\hat f, \hat v)_{L^2_A(\Lambda)}, && \quad \forall  \hat v \in H^1_{A}(\Lambda). 
\end{alignat}
The terms given in $b_{\Lambda}$ model the coupling between the 3D solution $u$ and the 1D solution $\hat u$. 
Equivalently, one can write the above as follows.  Find $\bm{u} =  (u,\hat u) \in H_0^1(\Omega)\times H_A^1(\Lambda)$ such that 
\begin{equation}
\mathcal A(\bm{u},\bm{v}) = (f,v)_{\Omega} +(\hat f, \hat v)_{L^2_A(\Lambda)} , \quad \forall \bm{v}= (v, \hat v) \in  H_0^1(\Omega)\times H_A^1(\Lambda), \label{eq:weak_form_grouped} 
\end{equation}
where we define for $\bm{u} = (u,\hat u ), \bm{v} = (v ,\hat v) \in H_0^1(\Omega) \times H^1_{A}(\Lambda)$ 
\begin{equation} 
  \mathcal{A} (\bm{u}, \bm{v}) = a(u,v) + a_{\Lambda}(\hat u , \hat v) + b_{\Lambda} (\overline{u} - \hat{u}, \overline{v} - \hat v ). 
\end{equation}
The problem  given in \eqref{eq:weak_form_grouped} is well-posed, see  \cite[Section 3.2]{laurino2019derivation}. 
\section{Discontinuous Galerkin formulation}\label{sec:dG}
\subsection{Meshes and DG spaces}We consider a family of regular partitions of $\Omega$  made of tetrahedra and  denoted by $\mathcal{T}_{\Omega}^h$. The mesh-size is $h = \max_{K \in \meshg} h_K $, where $h_K = \mathrm{diam}(K)$. We associate with  $\mathcal{T}_{\Omega}^h$, the space $H^1(\meshg)$ of broken $H^1$ functions in $\Omega$, and  a finite dimensional space $\mathbb{V}_h^{\Omega}$  of broken piecewise polynomials of order $k_1$. 
\begin{equation}
    \mathbb{V}_h^{\Omega} = \{v_h \in L^2(\Omega), \,\,\,  v_h \in \mathbb{P}_{k_1}(K), \,\,\, \forall K \in \mathcal{T}_{\Omega}^h\}. 
\end{equation}
Similarly, we let $\mathcal{T}_{\Lambda}^h   = \{ (s_{i-1}, s_{i}), \,\, i = 1,\ldots, N\} $ with $s_{0}=0$ and $s_{N} = L$ be a family of  uniform  partitions of $\Lambda$,  with mesh size $h_\Lambda = s_i - s_{i-1}$. We let $H^1(\meshl)$ be the space of broken $H^1$ functions in $\Lambda$, and we let  $\mathbb{V}^\Lambda_h$ be the respective space of broken piecewise polynomials of order $k_2$.  
\begin{equation}
\label{eq:dg_space_1D}
    \mathbb{V}_h^{\Lambda} = \{ \hat v_h \in L^2((0,L)), \,\,\,  \hat v_h \in \mathbb{P}_{k_2}((s_{i-1},s_i)), \,\,\, 1\leq i \leq N\}.  
\end{equation}
For each $1\leq i\leq N$,  we let $B_i$ be the portion of $B_{\Lambda}$ obtained when $s$ is restricted to $\Lambda_i = (s_{i-1},s_i)$. That is, we have that $$\bigcup_{1\leq i\leq N} B_i  = B_{\Lambda}. $$
For each $1\leq i\leq N$, we now define neighborhoods of $[s_{i-1},s_i]$ consisting of 3D elements in $\meshg$. Namely, we define 
\begin{equation}
    \omega_{i} = \{K \in \meshg, \,\,\, K \cap \partial B_i \neq \emptyset\}.  \label{eq:def_w_EL}
\end{equation}
We can then write 
\begin{equation}
    \meshb := \{K \in \meshg, \,\,\, K \cap \partial B_{\Lambda} \neq \emptyset\} = \bigcup_{1\leq i\leq N} \omega_i.  \label{eq:mesh_b}
\end{equation}

We assume that if $K \cap B_{\Lambda} \neq \emptyset$, then  the two-dimensional Lebesgue measure of $\partial K \cap \partial B_{\Lambda}$ is zero. This ensures that the average operator $\overline \cdot $ given in \eqref{eq:average_operator} is well defined over $H^1(\meshg)$.  

Further, 
we assume the following relation between the level of refinement for the 3D domain and that of the 1D domain. For $K \in \meshb$, let $\mathcal{I}_K$ be the set of integers $i_0$  such that $K \in \omega_{i_0}$. We assume that the cardinality of $\mathcal{I}_K$ is bounded above by a constant independent of $K$ and of $h$.
Essentially, this assumption means the 3D mesh intersecting $\partial B_{\Lambda}$ can not remain fixed while the 1D mesh is refined.

We also denote by $\Gamma_h$ the set of all interior faces in $\mathcal{T}_{\Omega}^h$. The set of edges belonging to elements in $\meshb$  is decomposed by defining \begin{equation}
    \Gamma_{i}  = \{F  \in \Gamma_h, \,\,\, F \subset \partial K, \,\,\, \mathrm{where } \,\,\, K \in \omega_i \}. \label{eq:def_Gamma_i}
\end{equation} 

For each interior face $F$, we associate a unit normal
vector $\bm{n}_F$ and we denote by $K_F^1$ and $K_F^2$ the two elements that share $F$ such that the vector $\bm{n}_F$ points from 
$K_F^1$ to $K_F^2$. We denote the average and the jump of a function $v_h \in \mathbb V_h^\Omega$ by $\{v_h\}$
and $[v_h]$ respectively.  
\begin{align}
\{v_h\} = \frac{1}{2} \left( v_h\vert_{K_F^1} + v_h \vert_{K_F^2}  \right), \quad [v_h] = v_h \vert_{K_F^1} - v_h\vert_{K_F^2},
\quad \forall F \in\Gamma_h.  
\end{align} 
If $F = \partial \Omega \cap \partial K_F^1$, then the average and the jump are given by
\begin{equation}
    [v] = \{ v\} = v\vert_{K_F^1}. 
\end{equation}
The area of $F \in \Gamma_h \cup \partial \Omega$ is denoted by $|F|$. Similar definitions are adopted for jumps and averages of $\hat{v}_h \in \mathbb{V}_h^{\Lambda}$ on the nodes $s_i$. 
\begin{alignat*}{2}   
1\leq i\leq N-1, & \quad [\hat v_h]_{s_i} && = 
\lim_{t\rightarrow 0, t>0} \hat v_h(s_i  - t)
-\lim_{t\rightarrow 0, t>0} \hat v_h(s_i +  t), \\ 
1\leq i\leq N-1,&  \quad \{\hat v_h\}_{s_i} && = 
\frac12 \lim_{t\rightarrow 0, t>0} \hat v_h(s_i+t)
+\frac12 \lim_{t\rightarrow 0, t>0} \hat v_h(s_i-t), \\
  & \,\,\,\,\,\,\,  [\hat{v}_h]_{s_0} && = - \hat{v}_h(s_0), \quad
[\hat{v}_h]_{s_N} = \hat{v}_h(s_N).
\end{alignat*}
For $v \in H^1(\meshg)$, we define the norm for $\sigma_{\Omega} > 0$
\begin{equation}
    \|v\|_{\dgg}^2 = \sum_{K \in \meshg} \|\nabla v\|_{L^2(K)}^2 + \sum_{F \in\Gamma_h \cup \partial \Omega} \frac{ \sigma_\Omega}{|F|^{1/2}}\|[v]\|^2_{L^2(F)}.  
\end{equation}
A Poincar\'e inequality holds in   $H^1(\meshg)$ (see for e.g \cite[Remark 4.15]{di2011mathematical}): 
\begin{equation}
    \|v\|_{L^2(\Omega)} \lesssim \|v\|_{\dgg}, \quad \forall v \in H^1(\meshg).    \label{eq:Poincare}
\end{equation}
For  $\hat{v} \in H^1(\meshl)$, we define the semi-norm for $\sigma_{\Lambda} > 0$
\begin{equation}
    |\hat{v}|_{\dgl}^2 = \sum_{i=1}^N \|\mathrm{d}_s \hat v\|_{L^2((s_{i-1},s_i))}^2 + \sum_{i=1}^{N-1} \frac{\sigma_\Lambda}{h_{\Lambda}} [\hat v]_{s_i}^2. \label{eq:def_dg_e}
\end{equation}
The above definitions allow us to introduce the norm $\|\cdot \|_{\mathrm{DG}}$ on $H^1(\meshg) \times H^1(\meshl)$. For $\bm{v} = (v, \hat v)$, 
\begin{equation} \label{eq:norm_dg_full}
\|\bm{v}\|_{\mathrm{DG}}^2 =  \|v\|_{\dgg}^2  +  |\hat{v}|_{\dgl}^2 + \|\overline{v} - \hat{v} \|^2_{L_P^2(\Lambda)}. 
\end{equation}
The above indeed defines a norm. 
If $\|\bm{v}\|_{\DG} = 0$, then $v = 0 $ since $\|\cdot \|_{\dgg}$ is a norm on $H^1(\meshg)$. This implies that $ \|\hat v\|_{L_P^2(\Lambda)} = 0 $ and $\hat v= 0.$
\subsection{The numerical method} We use interior penalty discontinuous Galerkin  forms \cite{riviere2008discontinuous}. Define  $a_{h}(\cdot, \cdot): \dgg \times \dgg \rightarrow \mathbb{R}$: 
\begin{align}\label{eq:dg_bilinear_form}  
a_{h}(u,v) = \sum_{K \in \meshg} \int_K \nabla u \cdot  \nabla v   - \sum_{ F   \in \Gamma_h \cup \partial \Omega} \int_F  \{\nabla u\}\cdot \bm{n}_F [v]   \\ + \epsilon_1 \sum_{F \in \Gamma_h \cup \partial \Omega} \int_F  \{\nabla v\}\cdot \bm{n}_F [u]  + \sum_{F \in \Gamma_h \cup \partial \Omega}  \frac{\sigma_\Omega}{|F|^{1/2}} \int_F [u][v].  \nonumber 
\end{align} 
For the 1D discrete solution, we introduce the form $ a_{\Lambda,h}(\cdot,\cdot): \dgl \times \dgl \rightarrow \mathbb{R} $
\begin{align}
    a_{\Lambda,h}(\hat u, \hat v) = &\sum_{i=1}^{N} \int_{s_{i-1}}^{s_i} A \frac{\mathrm d\hat u}{\ds} \frac{\mathrm d\hat v}{\ds} - \sum_{i=1}^{N - 1}  \left \{A\frac{\mathrm d\hat u}{\ds}\right \}_{s_i} [\hat v]_{s_i} \\ & + \epsilon_2   \sum_{i=1}^{N- 1}  \left \{A\frac{\mathrm d\hat v}{\ds}\right \}_{s_i} [\hat u]_{s_i} +  \sum_{i=1}^{N - 1}  \frac{\sigma_{\Lambda}}{h_\Lambda}  [\hat u]_{s_i} [\hat v]_{s_i}.  \nonumber
\end{align}
In the above, $\epsilon_1, \epsilon_2 \in \{ -1,0,1\}$ lead to 
symmetric, incomplete, or non-symmetric discretizations, and $\sigma_\Lambda, \sigma_\Omega > 0$ are penalty parameters. The dG formulation of problem \eqref{eq:weak_form_grouped} then reads as follows. Find $\bm{u}_h = (u_h, \hat{u}_h) \in \dgg \times \dgl$ such that 
\begin{align}
\mathcal{A}_h (\bm{u}_h, \bm{v}_h) = (f,v_h)_{\Omega} + (\hat f , \hat v_h)_{\Lambda}, \quad \forall \bm{v}_h = (v_h, \hat{v}_h) \in  \dgg \times \dgl,  \label{eq:discrete_form_dg}
\end{align}
where we defined the form $\mathcal{A}_h (\cdot, \cdot): (\dgg \times \dgl)^2  \rightarrow \mathbb R$: 
\begin{align} \label{eq:dg_form_combined}
    \mathcal{A}_h (\bm{u}_h, \bm{v}_h) = a_h(u_h, v_h) + a_{\Lambda, h} (\hat{u}_h, \hat{v}_h) +  b_{\Lambda} (\overline{u}_h - \hat{u}_h, \overline{v}_h - \hat{v}_h). % +  b_{\Lambda} (\hat{u}_h - \overline{u}_h , \hat v_h ) . 
\end{align}
It is important to note that the interface $\Gamma$ does not need to be resolved by the mesh to realize the coupling term $b_{\Lambda}$; identifying the elements intersecting $\Gamma$ is sufficient. To show the well--posedness of the discrete dG formulation, we first show the coercivity of $\mathcal{A}_h$ with respect to the norm defined in \eqref{eq:norm_dg_full}.
\begin{lemma}[Coercivity] \label{lemma:coercivity}
For suitably chosen penalty parameters $\sigma_\Lambda$ and $\sigma_{\Omega}$, there exists a constant $C_{\mathrm{coerc}}$ such that 
\begin{equation} \label{eq:coercivity}
\mathcal{A}_h(\bm{u}_h, \bm{u}_h) \geq C_{\mathrm{coerc}} \|\bm{u}_h\|_{\DG}^2, \quad \forall \bm u_h \in \dgg \times \dgl . 
\end{equation}
\end{lemma}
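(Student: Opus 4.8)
The plan is to exploit the fact that $\mathcal{A}_h(\bm{u}_h,\bm{u}_h)$ splits into three pieces, each of which can be bounded below by the corresponding contribution to $\|\bm{u}_h\|_{\DG}^2$. Evaluating \eqref{eq:dg_form_combined} on the diagonal gives $\mathcal{A}_h(\bm{u}_h,\bm{u}_h) = a_h(u_h,u_h) + a_{\Lambda,h}(\hat u_h,\hat u_h) + b_{\Lambda}(\overline{u}_h-\hat u_h, \overline{u}_h-\hat u_h)$. The coupling term is the easy one: since $b_{\Lambda}(\hat v,\hat w) = (\xi\hat v,\hat w)_{L^2_P(\Lambda)}$ with $\xi>0$, we have $b_{\Lambda}(\overline{u}_h-\hat u_h, \overline{u}_h-\hat u_h) = \xi\|\overline{u}_h-\hat u_h\|_{L^2_P(\Lambda)}^2\ge 0$, which reproduces exactly the third term of the $\DG$ norm (scaled by $\xi$). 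Because the form is evaluated on the diagonal, this contributes a perfect square with no cross terms to control, so the coupling cannot spoil coercivity. It therefore suffices to prove coercivity of $a_h$ with respect to $\|\cdot\|_{\dgg}$ and of $a_{\Lambda,h}$ with respect to $|\cdot|_{\dgl}$ separately, after which the claim follows with $C_{\mathrm{coerc}}=\min(C_1,C_2,\xi)$.

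For $a_h$ I would follow the classical interior penalty argument. On the diagonal the consistency and symmetrization face integrals in \eqref{eq:dg_bilinear_form} combine to $(\epsilon_1-1)\sum_{F}\int_F\{\nabla u_h\}\cdot\bm{n}_F[u_h]$. In the nonsymmetric case $\epsilon_1=1$ these cancel identically and $a_h(u_h,u_h)=\|u_h\|_{\dgg}^2$ for every $\sigma_\Omega>0$. For the incomplete ($\epsilon_1=0$) and symmetric ($\epsilon_1=-1$) variants the residual face term must be absorbed. Applying Cauchy--Schwarz, Young's inequality with a weight tuned to the penalty scaling, the discrete trace inequality $\|\nabla v_h\|_{L^2(F)}\lesssim h_K^{-1/2}\|\nabla v_h\|_{L^2(K)}$ for polynomials, and the shape-regularity relation $|F|^{1/2}\approx h_K$, one estimates
\begin{equation*}
\Bigl|\int_F\{\nabla u_h\}\cdot\bm{n}_F[u_h]\Bigr|\le \delta\bigl(\|\nabla u_h\|_{L^2(K_F^1)}^2+\|\nabla u_h\|_{L^2(K_F^2)}^2\bigr)+\frac{C}{\delta}\,\frac{1}{|F|^{1/2}}\|[u_h]\|_{L^2(F)}^2.
\end{equation*}
Summing over faces (each element appearing in boundedly many faces by regularity), then choosing $\delta$ small and $\sigma_\Omega$ large enough, a fixed fraction of both the broken-gradient energy and the penalty survives, yielding $a_h(u_h,u_h)\ge C_1\|u_h\|_{\dgg}^2$.

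The one-dimensional form $a_{\Lambda,h}$ is handled identically, with the nodes $s_i$ playing the role of faces. Here I would use that $A\in\mathcal{C}^1([0,L])$ is positive on the compact interval, hence $0<A_{\min}\le A\le A_{\max}$, so the weighted first term controls $\sum_{i}\|\mathrm{d}_s\hat u_h\|_{L^2(\Lambda_i)}^2$ from below. The nodal derivative trace obeys $|\mathrm{d}_s\hat v_h(s_i)|\lesssim h_\Lambda^{-1/2}\|\mathrm{d}_s\hat v_h\|_{L^2(\Lambda_i)}$, matching the $\sigma_\Lambda/h_\Lambda$ penalty scaling, and the same Young absorption gives $a_{\Lambda,h}(\hat u_h,\hat u_h)\ge C_2|\hat u_h|_{\dgl}^2$ for $\sigma_\Lambda$ large enough (or any $\sigma_\Lambda>0$ when $\epsilon_2=1$). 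Adding the three lower bounds then proves \eqref{eq:coercivity}.

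I expect the main obstacle to be the symmetric/incomplete case, where everything hinges on the interplay between the discrete trace inequality, the identification $|F|^{1/2}\approx h_K$ (resp. the uniform $1$D mesh), and the choice of penalty parameters large enough to absorb the residual face terms. By contrast, the coupling term needs no estimation at all, and it is precisely the positivity of $\xi$ together with the lower bound on $A$ that makes the constants $C_1,C_2,\xi$ in the three energy contributions strictly positive.
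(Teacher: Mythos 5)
Your proposal is correct and follows essentially the same route as the paper: the paper's proof likewise splits $\mathcal{A}_h$ on the diagonal, invokes the standard interior penalty coercivity bounds $a_h(u_h,u_h)\ge C_1\|u_h\|_{\dgg}^2$ and $a_{\Lambda,h}(\hat u_h,\hat u_h)\ge C_2|\hat u_h|_{\dgl}^2$ (for $\sigma_\Omega,\sigma_\Lambda$ large enough in the symmetric/incomplete cases, or any penalty in the nonsymmetric case), and observes that the coupling term contributes the nonnegative square $\xi\|\overline{u}_h-\hat u_h\|_{L^2_P(\Lambda)}^2$. The only difference is that you spell out the classical trace/Young absorption argument that the paper simply cites as standard.
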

\begin{proof}
From standard analysis of dG methods, if $\sigma_\Omega$ is large enough whenever $\epsilon_1 = -1$ or  for any $\sigma_\Omega$ when $\epsilon_1 = 1$ (same conditions apply for $\sigma_\Lambda$ and $\epsilon_2$), we have 
\begin{align}
 a_h (u_h, u_h) \geq C_1 \|u_h\|_{\dgg}^2,  \quad a_{\Lambda,h}(\hat u_h ,\hat u_h) \geq C_2 |\hat{u}_h|_{\dgl}^2.  \label{eq:coercivity_local}
\end{align}
The result then immediately follows from the above and from the definition of $\mathcal{A}_h(\cdot, \cdot)$.
\end{proof}
\begin{lemma}[Existence and uniqueness of solutions] There exists a unique pair $(u_h, \hat{u}_h) \in \dgg\times \dgl$ solving \eqref{eq:discrete_form_dg}. \label{lemma:well_posedness_1}
\end{lemma}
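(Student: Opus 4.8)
The plan is to exploit the fact that \eqref{eq:discrete_form_dg} is a linear problem posed on the finite-dimensional space $\dgg \times \dgl$, so that existence and uniqueness are equivalent and both reduce to the coercivity already established in Lemma \ref{lemma:coercivity}. After fixing a basis of $\dgg \times \dgl$, the discrete formulation \eqref{eq:discrete_form_dg} is equivalent to a square linear system $M \bm{x} = \bm{b}$, where $M$ encodes $\mathcal{A}_h$ and $\bm{b}$ the right-hand side data. For such a square system it suffices to show that the associated homogeneous problem admits only the trivial solution; surjectivity, and hence existence for the given data, then follows from injectivity by the rank–nullity theorem.

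To establish this injectivity, I would suppose $\bm{u}_h^{(1)}, \bm{u}_h^{(2)} \in \dgg \times \dgl$ are two solutions of \eqref{eq:discrete_form_dg} and set $\bm{w}_h = \bm{u}_h^{(1)} - \bm{u}_h^{(2)}$. Subtracting the two identities and using bilinearity gives $\mathcal{A}_h(\bm{w}_h, \bm{v}_h) = 0$ for every $\bm{v}_h \in \dgg \times \dgl$. Choosing $\bm{v}_h = \bm{w}_h$ and invoking the coercivity \eqref{eq:coercivity} yields
\[
0 = \mathcal{A}_h(\bm{w}_h, \bm{w}_h) \geq C_{\mathrm{coerc}} \|\bm{w}_h\|_{\DG}^2 \geq 0,
\]
so that $\|\bm{w}_h\|_{\DG} = 0$. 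Since $\|\cdot\|_{\DG}$ is a genuine norm on $H^1(\meshg) \times H^1(\meshl)$, as verified immediately after its definition in \eqref{eq:norm_dg_full}, this forces $\bm{w}_h = \bm{0}$, i.e.\ $\bm{u}_h^{(1)} = \bm{u}_h^{(2)}$. Uniqueness follows, and by the finite-dimensional argument above, so does existence.

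I do not expect a real obstacle here: the statement is a standard consequence of coercivity on a finite-dimensional space, and the only point requiring care is that the vanishing of $\|\cdot\|_{\DG}$ implies the vanishing of $\bm{w}_h$, which is precisely the norm property recorded after \eqref{eq:norm_dg_full}. One could alternatively route the argument through the Lax–Milgram lemma, for which boundedness of $\mathcal{A}_h$ would also be required; however, on a finite-dimensional space boundedness is automatic by equivalence of norms, so the direct linear-algebra reduction is the most economical presentation.
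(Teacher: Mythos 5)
Your proposal is correct and follows exactly the paper's own argument: uniqueness from the coercivity of $\mathcal{A}_h$ (Lemma \ref{lemma:coercivity}) together with the fact that $\|\cdot\|_{\DG}$ is a norm, and existence from the square finite-dimensional linear system via rank--nullity. You have simply spelled out the details that the paper's two-line proof leaves implicit.
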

\begin{proof}
From the coercivity property, it easily follows that the solution is unique. Since this is a square linear system in finite dimensions, existence follows.  
\end{proof}
\section{Error analysis}\label{sec:error}
The main difficulty in the error analysis of the dG formulation is that the strong consistency of the method can not be assumed. Indeed, under sufficient regularity assumptions on the domain, one can only show that  the 3D solution  $u$  of \eqref{eq:weak_form_grouped} belongs to $H^{3/2-\eta}(\Omega)$ for $\eta >0$ \cite{laurino2019derivation}.  However, the  form $a_h$ can not be extended to this space  since the traces of gradients for functions in $H^{3/2-\eta}(\Omega)$ are not well defined. Therefore, we adopt here a combination of a priori and a posteriori error estimates within the framework proposed by Gudi \cite{gudi2010new} to prove convergence. 
The main result is provided in Theorem \ref{thm:main_result}.
\subsection{Preliminary lemmas}We first introduce the conforming spaces  $\mathbb{V}_{h,c}^\Omega \subset H^1_0(\Omega)$ of continuous piecewise linear functions defined over $\meshg$ in $\Omega$.  Similarly, we let $\mathbb{V}_{h,c}^\Lambda \subset H^1(\Lambda)$ be the respective space of continuous piecewise linear functions defined over $\meshl$ in $\Lambda$.  
\begin{lemma} \label{lemma:enrich}
    There exists an enriching map $\bm{E} =(E,\hat{E}): \dgg \times \dgl \rightarrow \cgg \times \cgl$
such that 
\begin{align}
|E v|_{H^1(\Omega)} \lesssim \|v \|_{\dgg}, &\quad |\hat{E} \hat{v} |_{H^1(\Lambda)} \lesssim | \hat{v}  |_{\dgl}, \label{eq:stability_E}\\ 
\left(\sum_{K \in \meshg} h_K^{-2} \|E v   - v\|^2_{L^2(K)} \right)^{1/2} \lesssim \|v\|_{\dgg}, &\quad \|\hat{E} \hat{v} - \hat v \|_{L^2(\Lambda)} \lesssim h_{\Lambda}  | \hat{v}  |_{\dgl}.  \label{eq:approx_E}
\end{align}
\end{lemma}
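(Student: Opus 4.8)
The plan is to take $\bm E=(E,\hat E)$ to be the standard nodal averaging (Oswald-type) operators and to control $Ev-v$ and $\hat E\hat v-\hat v$ by comparing each to the piecewise-linear Lagrange interpolant of the argument on every element. First I would fix the construction. Since $\cgg$ consists of continuous piecewise linears vanishing on $\partial\Omega$, it suffices to prescribe vertex values: for an interior vertex $z$ of $\meshg$ set $(Ev)(z)=\frac{1}{|\mathcal T_z|}\sum_{K\in\mathcal T_z}v|_K(z)$, where $\mathcal T_z$ is the set of elements of $\meshg$ containing $z$ and $|\mathcal T_z|$ its cardinality, and for $z\in\partial\Omega$ set $(Ev)(z)=0$ to guarantee $Ev\in H^1_0(\Omega)$. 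Analogously, I would define $\hat E:\dgl\to\cgl$ by $(\hat E\hat v)(s_i)=\{\hat v\}_{s_i}$ at interior nodes and by the one-sided value at $s_0,s_N$ (no constraint, since $\cgl\subset H^1(\Lambda)$). Both are well-defined linear maps into the conforming spaces.

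For the 3D estimates I would, on each $K$, let $v_1\in\mathbb P_1(K)$ be the Lagrange interpolant of $v|_K$ at the vertices of $K$ and split $Ev-v=(Ev-v_1)+(v_1-v)$. The interpolation part is handled by the first-order estimate $\|v_1-v\|_{L^2(K)}\lesssim h_K\|\nabla v\|_{L^2(K)}$, so that $\sum_K h_K^{-2}\|v_1-v\|_{L^2(K)}^2\lesssim\sum_K\|\nabla v\|_{L^2(K)}^2\le\|v\|_{\dgg}^2$. The averaging part $Ev-v_1$ is linear on $K$ with vertex values exactly $(Ev)(z)-v|_K(z)$; using the norm equivalence $\|w\|_{L^2(K)}^2\approx h_K^3\sum_z|w(z)|^2$ for $w\in\mathbb P_1(K)$, a telescoping argument expresses each vertex difference as a bounded sum of face jumps $[v]|_F$ over faces $F$ meeting $z$, and combining the discrete trace scaling $|[v](z)|^2\lesssim|F|^{-1}\|[v]\|_{L^2(F)}^2$ with $h_K^{-1}\approx|F|^{-1/2}$ and shape regularity (bounded cardinality of $\mathcal T_z$) gives $\sum_K h_K^{-2}\|Ev-v_1\|_{L^2(K)}^2\lesssim\sum_{F\in\Gamma_h\cup\partial\Omega}|F|^{-1/2}\|[v]\|_{L^2(F)}^2\lesssim\|v\|_{\dgg}^2$. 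This proves the left estimate in \eqref{eq:approx_E}. The stability bound in \eqref{eq:stability_E} then follows by writing $\|\nabla Ev\|_{L^2(K)}\le\|\nabla(Ev-v)\|_{L^2(K)}+\|\nabla v\|_{L^2(K)}$, applying the inverse inequality $\|\nabla(Ev-v)\|_{L^2(K)}\lesssim h_K^{-1}\|Ev-v\|_{L^2(K)}$, and summing with the just-proved approximation estimate.

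The 1D estimates follow the same scheme. On $\Lambda_i$ I would split $\hat E\hat v-\hat v=(\hat E\hat v-\hat v_1)+(\hat v_1-\hat v)$ with $\hat v_1$ the linear interpolant of $\hat v$ on $\Lambda_i$. The interpolation term obeys $\|\hat v_1-\hat v\|_{L^2(\Lambda_i)}\lesssim h_\Lambda\|\mathrm d_s\hat v\|_{L^2(\Lambda_i)}$. For the averaging term, the nodal differences are $\pm\tfrac12[\hat v]_{s_i}$, so $\|\hat E\hat v-\hat v_1\|_{L^2(\Lambda_i)}^2\lesssim h_\Lambda\big([\hat v]_{s_{i-1}}^2+[\hat v]_{s_i}^2\big)$; summing over $i$ and writing $\sum_i[\hat v]_{s_i}^2=h_\Lambda\sum_i h_\Lambda^{-1}[\hat v]_{s_i}^2\lesssim h_\Lambda|\hat v|_{\dgl}^2$ yields $\|\hat E\hat v-\hat v\|_{L^2(\Lambda)}\lesssim h_\Lambda|\hat v|_{\dgl}$, the right estimate in \eqref{eq:approx_E}. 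The right stability bound in \eqref{eq:stability_E} follows again from the inverse inequality combined with this $L^2$ estimate.

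The hard part will be the 3D averaging estimate: expressing the nodal averaging difference at an interior vertex $z$ as a controlled sum of the face jumps $[v]|_F$ of $v$ across the faces meeting $z$, and checking that the scaling $h_K^{-1}\approx|F|^{-1/2}$ reproduces exactly the penalty weight $\sigma_\Omega/|F|^{1/2}$ appearing in $\|\cdot\|_{\dgg}$. Shape regularity and the bounded overlap of the vertex patches $\mathcal T_z$ are essential here, and the homogeneous Dirichlet values require that boundary faces be included in the jump sum — which they are, since on $F\subset\partial\Omega$ one has $[v]=v|_{K_F^1}$ and these faces are counted in the definition of $\|\cdot\|_{\dgg}$. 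By comparison the 1D argument is routine.
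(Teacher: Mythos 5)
Your construction is exactly the one the paper invokes: its proof consists of citing the nodal Lagrange interpolant with nodal values taken as averages (Karakashian--Pascal, Theorem 2.2; Di Pietro--Ern, Section 5.5.2), which is precisely the Oswald-type averaging operator you build, and your jump-telescoping, scaling, and inverse-inequality arguments are the standard proof of those cited results. So your proposal is correct and takes essentially the same approach as the paper, with the details written out rather than referenced.
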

\begin{proof}An enriching map with the above properties can be constructed as a nodal Lagrange interpolant with nodal values taken as averages  of $v$ ($\hat v$), see  \cite[Theorem 2.2] {karakashian2003posteriori} and 
\cite[Section 5.5.2]{di2011mathematical}. 
Another approach is to apply a Scott-Zhang interpolant to a Crouzeix-Raviart 
 correction, see  \cite[Lemma 6.2]{girault2016strong}.   
\end{proof} 
We now define $L^2$--projections.  Let $K \in \meshg$ and  $\Lambda_i = (s_{i-1},s_i)$ for $ i \in \{1,\ldots,N\}$. For any $(w, \hat w) \in L^2(K) \times L^2(\Lambda_i)$, define $(\pi_h w, \hat \pi_h \hat w) \in \mathbb{P}^{k_1}(K) \times \mathbb{P}^{k_2}(\Lambda_i)$ such that 
\begin{equation} \forall v_h \in \mathbb P^{k_1}(K), \quad 
(\pi_h w - w, v_h)_K = 0, \quad \forall \hat{v}_h \in \mathbb P^{k_2} (\Lambda_i),  \quad (\hat\pi_h \hat w - \hat w, \hat{v}_h)_{L_P^2(\Lambda_i)} = 0.  \label{eq:local_l2_projection}
\end{equation} 
\begin{lemma}[Properties of the $L^2$--projections]  Let $s \in \{0,\ldots, k+1\}$, $m \in \{0,\ldots,s\}$, $K \in \meshg$, and $\Lambda_i = (s_{i-1},s_i)$ for $i  \in \{1,\ldots, N\}$. Assume that $w \in H^{s}(K)$ and $\hat w \in H^{s}(\Lambda_i)$. Then,
\begin{align}
\|\pi_h w - w\|_{H^m(K)} \lesssim h_K^{s-m} \| w \|_{H^{s}(K)},  \quad \|\hat \pi_h \hat w - \hat w\|_{H^m(\Lambda_i)} \lesssim h_{\Lambda}^{s-m} \| \hat w \|_{H^{s}(\Lambda_i)}.  \label{eq:approximation_l2_projection}
\end{align}
   In addition, the $L^2$ projection is stable in the dG norm. Namely, 
\begin{equation}
    \|\pi_h w \|_{\dgg} \lesssim \|w\|_{H^1(\meshg)}, \quad \forall w \in H^1(\meshg).  \label{eq:stability_l2_projection}
\end{equation}
\end{lemma}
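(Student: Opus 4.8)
\emph{Overview.} The statement collects three estimates of entirely standard finite-element type, so my plan is to reduce everything to the Bramble--Hilbert lemma together with inverse and trace inequalities on shape-regular elements, treating the $3$D and $1$D cases in parallel. For the $1$D projection the only extra ingredient is that the defining inner product carries the weight $P$; since the perimeter $P$ is bounded above and below by positive constants, the weighted and unweighted $L^2(\Lambda_i)$ norms are equivalent, so I may replace $\hat\pi_h$ by its unweighted analogue at the cost of a fixed constant and argue exactly as in the $3$D case. I describe the $3$D argument below; the $1$D one is identical with $h_K$ replaced by $h_\Lambda$.

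\emph{Approximation bounds.} I would first dispose of the case $m=0$. The $L^2$ projection is the best $L^2(K)$ approximation out of $\mathbb{P}^{k_1}(K)$, so for any $p\in\mathbb{P}^{k_1}(K)$ one has $\|\pi_h w-w\|_{L^2(K)}\le\|w-p\|_{L^2(K)}$; choosing $p$ to be the averaged Taylor polynomial and invoking Bramble--Hilbert gives $\|w-p\|_{L^2(K)}\lesssim h_K^{s}\|w\|_{H^{s}(K)}$, which is valid since $s\le k_1+1$. For $m\ge 1$ I would write, for the same $p$,
\begin{equation*}
\pi_h w - w = \pi_h(w-p) - (w-p),
\end{equation*}
using $\pi_h p=p$. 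The second term is controlled directly by Bramble--Hilbert, $\|w-p\|_{H^m(K)}\lesssim h_K^{s-m}\|w\|_{H^s(K)}$. For the first term the inverse inequality on the polynomial $\pi_h(w-p)$ together with the $L^2$-contraction of the projection gives
\begin{equation*}
\|\pi_h(w-p)\|_{H^m(K)} \lesssim h_K^{-m}\|\pi_h(w-p)\|_{L^2(K)} \le h_K^{-m}\|w-p\|_{L^2(K)} \lesssim h_K^{s-m}\|w\|_{H^s(K)}.
\end{equation*}
Summing the two contributions yields the claimed bound; shape regularity is what makes the inverse inequality and the reference-element scaling uniform in $K$.

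\emph{Stability in the dG norm.} I would bound the two parts of $\|\pi_h w\|_{\dgg}^2$ separately. The volume term is the case $m=s=1$ of the approximation bound: $\|\nabla\pi_h w\|_{L^2(K)}\le\|\nabla(\pi_h w-w)\|_{L^2(K)}+\|\nabla w\|_{L^2(K)}\lesssim\|w\|_{H^1(K)}$, and summing over $K$ gives $\sum_K\|\nabla\pi_h w\|_{L^2(K)}^2\lesssim\|w\|_{H^1(\meshg)}^2$. For the face term I would use, on each face $F\subset\partial K$, the scaled trace inequality $\|\varphi\|_{L^2(F)}^2\lesssim h_K^{-1}\|\varphi\|_{L^2(K)}^2+h_K\|\nabla\varphi\|_{L^2(K)}^2$ applied to $\varphi=\pi_h w-w$; combined with the $m=0$ and $m=1$ bounds (for $s=1$) this gives $\|(\pi_h w-w)|_K\|_{L^2(F)}^2\lesssim h_K\|w\|_{H^1(K)}^2$. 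Since $|F|^{1/2}\approx h_K$ by shape regularity, the weight $\sigma_\Omega/|F|^{1/2}$ turns this into $\|w\|_{H^1(K)}^2$, and summing over faces (each element belonging to boundedly many) closes the estimate.

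\emph{Main obstacle.} The one point deserving care is the face term. Writing $[\pi_h w]=[\pi_h w-w]+[w]$, the argument above controls the jump of $\pi_h w-w$, while the remaining contribution $[w]$ vanishes for the globally $H^1$ functions to which the estimate is applied, so $\|\pi_h w\|_{\dgg}$ is genuinely bounded by the broken norm $\|w\|_{H^1(\meshg)}$. I expect this step --- matching the $h_K^{1/2}$ gain from the trace inequality against the $|F|^{-1/2}\approx h_K^{-1}$ weight, while keeping track of which jumps survive --- to be the only part beyond routine polynomial approximation; the interior approximation bounds themselves are a direct Bramble--Hilbert computation.
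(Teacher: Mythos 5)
Your proof of the approximation estimates \eqref{eq:approximation_l2_projection} is correct: it is the standard Bramble--Hilbert argument (best approximation for $m=0$; the splitting $\pi_h w - w = \pi_h(w-p)-(w-p)$ with an inverse inequality for $m\geq 1$), which is essentially the proof behind the reference the paper cites without detail. Your route to the stability estimate---scaled trace inequality, the $s=1$ approximation bounds, and $|F|^{1/2}\approx h_K$---is also exactly what the paper's one-line proof sketches. One small imprecision: you cannot literally ``replace'' $\hat\pi_h$ by the unweighted projection (they are different operators); rather, the same argument applies to $\hat\pi_h$ directly, since it is an $L^2_P$-contraction that preserves polynomials and $P\approx 1$.

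The genuine gap is your final step, where you write $[\pi_h w]=[\pi_h w - w]+[w]$ and discard $[w]$ on the grounds that the estimate ``is applied to globally $H^1$ functions.'' It is not: in the proof of Lemma \ref{lemma:trace_for_broken}, the paper invokes \eqref{eq:stability_l2_projection} for $\pi_h u$ with $u\in H^1(\meshg)$ an \emph{arbitrary broken} function, whose jumps certainly do not vanish, so your proof does not cover the case in which the lemma is actually used. Moreover, no proof can close this gap as stated, because with the jump-free broken norm on the right-hand side the inequality is false for broken functions: take $w=\chi_K$, the indicator of an interior element, so that $\pi_h w = w$; then
\begin{equation*}
\|\pi_h w\|_{\dgg}^2 \;=\; \sum_{F\subset\partial K}\frac{\sigma_\Omega}{|F|^{1/2}}\,|F|\;\approx\;\sigma_\Omega\, h_K,
\qquad
\|w\|^2_{H^1(\meshg)} \;=\; |K| \;\approx\; h_K^3,
\end{equation*}
and $h_K^{1/2}\lesssim h_K^{3/2}$ fails as $h_K\to 0$. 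The repair consistent with the paper's usage is not to restrict to $H^1(\Omega)$ but to keep the term you dropped: the contribution of $[w]$ to the penalty sum is exactly part of $\|w\|_{\dgg}^2$, so your own decomposition yields
\begin{equation*}
\|\pi_h w\|_{\dgg} \;\lesssim\; \|w\|_{H^1(\meshg)} + \Bigl(\sum_{F\in\Gamma_h\cup\partial\Omega}\frac{\sigma_\Omega}{|F|^{1/2}}\|[w]\|^2_{L^2(F)}\Bigr)^{1/2} \;\lesssim\; \|w\|_{\dgg},
\end{equation*}
where the last step absorbs $\|w\|_{L^2(\Omega)}$ via Poincar\'e's inequality \eqref{eq:Poincare}. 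This dG-norm-to-dG-norm stability is what the proof of Lemma \ref{lemma:trace_for_broken} actually needs (it is combined with Poincar\'e there anyway), and it costs nothing beyond not discarding the jump term you had already isolated.
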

\begin{proof}
Proofs of the estimates in  \eqref{eq:approximation_l2_projection} can be found in \cite[Lemma 1.58]{di2011mathematical}.  The proof of \eqref{eq:stability_l2_projection} follows from applications of trace estimates and \eqref{eq:approximation_l2_projection}. 
\end{proof}
We now state a local trace inequality on $\partial B_{\Lambda}$. The proof of the following estimate is due to Wu and Xiao \cite[Lemma 3.1]{wu2019unfitted}. 
\begin{lemma}[Local trace estimate on $\partial B_{\Lambda}$] There exists a constant $h_0$ such that for all $h \leq h_0$ and $K \in \meshb$,  the following estimates hold 
\begin{alignat}{2}
     \|v\|_{L^2(\partial B_{\Lambda} \cap  \overline{K} )} &\lesssim h_{K}^{ -1/2} \|v\|_{L^2(K)} + h_{K}^{1/2} \|\nabla v\|_{L^2(K)},&& \quad \forall v \in H^1(K), \label{eq:trace_cont_lambda} \\ 
       \|v_h\|_{L^2(\partial B_{\Lambda} \cap  \overline{K})} &\lesssim h_{K}^{-1/2} \|v_h\|_{L^2(K)},  && \quad \forall v_h \in \mathbb{P}_k(K), \quad \forall k\geq 1.  \label{eq:trace_disc_lambda}
\end{alignat}
\end{lemma}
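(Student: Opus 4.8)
The plan is to deduce the discrete inequality \eqref{eq:trace_disc_lambda} from the continuous one \eqref{eq:trace_cont_lambda}, so the core of the argument is to establish \eqref{eq:trace_cont_lambda} with a constant independent of how the smooth surface $\partial B_{\Lambda}$ cuts the element $K$. First I would record the two classical ingredients used as black boxes: the standard fitted trace inequality on an element, $\|v\|_{L^2(\partial K)}^2 \lesssim h_K^{-1}\|v\|_{L^2(K)}^2 + h_K \|\nabla v\|_{L^2(K)}^2$ for $v \in H^1(K)$, and the polynomial inverse inequality $\|\nabla v_h\|_{L^2(K)} \lesssim h_K^{-1}\|v_h\|_{L^2(K)}$ for $v_h \in \mathbb{P}_k(K)$.

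For the continuous estimate the essential geometric fact is that $\partial B_{\Lambda}$ is a compact, embedded, piecewise-$C^1$ surface with uniformly bounded curvature: this follows from $\bm\lambda$ being $C^2$, $A \in \mathcal C^1$, and $a_0 \le A(s)+P(s) \le a_1$, which together keep the generalized cylinder non-degenerate and non-self-intersecting. Hence $\partial B_{\Lambda}$ has a positive reach $\rho$, and on a one-sided tubular neighborhood of width less than $\rho$ the signed distance $d$ to $\partial B_{\Lambda}$ is $C^1$ with $\bm N := \nabla d$ a unit extension of the outward normal satisfying $|\bm N|\le 1$ and $\|\operatorname{div}\bm N\|_{L^\infty}\lesssim 1$ (the bound being the mean curvature). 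I would fix $h_0 < \rho$; this is precisely the role of the hypothesis $h \le h_0$, since for $K \in \meshb$ it forces the set $D := K \cap B_{\Lambda}$ (or, symmetrically, $K\setminus\overline{B_{\Lambda}}$) to sit inside this neighborhood, where $\bm N$ is well defined and smooth.

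With $\bm N$ in hand I would apply the divergence theorem to the vector field $v^2 \bm N$ on the Lipschitz set $D$. Splitting $\partial D$ into $\partial B_{\Lambda} \cap \overline K$, where the outward normal of $D$ coincides with $\bm N$, and the remaining part $\partial K \cap \overline{B_{\Lambda}}$, gives
\begin{equation*}
\int_{\partial B_{\Lambda} \cap \overline K} v^2 = \int_{D} \left( 2 v \, \nabla v \cdot \bm N + v^2 \operatorname{div} \bm N \right) - \int_{\partial K \cap \overline{B_{\Lambda}}} v^2 \, \bm N \cdot \bm n_{\partial D}.
\end{equation*}
The volume term is controlled by $\|v\|_{L^2(K)}\|\nabla v\|_{L^2(K)} + \|v\|_{L^2(K)}^2$ using $|\bm N|\le 1$ and the curvature bound, the element-boundary term is controlled by $\|v\|_{L^2(\partial K)}^2 \lesssim h_K^{-1}\|v\|_{L^2(K)}^2 + h_K\|\nabla v\|_{L^2(K)}^2$ via the fitted trace inequality, and a weighted Young inequality together with $h_K \le h_0 \le 1$ absorbs the cross term and the bare $\|v\|_{L^2(K)}^2$ into the right-hand side of \eqref{eq:trace_cont_lambda}.

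Finally, \eqref{eq:trace_disc_lambda} follows by applying \eqref{eq:trace_cont_lambda} to $v_h \in \mathbb{P}_k(K)$ and then using the inverse inequality to replace $h_K^{1/2}\|\nabla v_h\|_{L^2(K)}$ by $h_K^{-1/2}\|v_h\|_{L^2(K)}$. I expect the main obstacle to be entirely geometric and concentrated in the second step: one must certify that the constants entering the normal extension (the reach $\rho$ and the curvature bound) are uniform over all of $\partial B_{\Lambda}$, so that $h_0$ can be fixed independently of $K$ and the estimate does not degenerate as $\partial B_{\Lambda}$ grazes an element. The standing assumption that the two-dimensional measure of $\partial K \cap \partial B_{\Lambda}$ vanishes is what rules out tangential contact and keeps $D$ Lipschitz, so that the divergence theorem applies cleanly.
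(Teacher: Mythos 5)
The paper itself contains no proof of this lemma: the statement is quoted with the proof attributed to Wu and Xiao \cite[Lemma 3.1]{wu2019unfitted}. Your proposal is a correct, self-contained reconstruction and it rests on the same mechanism as that reference: a compact, uniformly smooth surface admits a tubular neighborhood of uniform width in which the signed distance is $C^{1,1}$, and the trace on the cut portion $\partial B_\Lambda \cap \overline K$ is controlled either by integrating along normal segments (the Wu--Xiao route) or, equivalently, by your Gauss--Green identity for $v^2\bm N$ on $K\cap B_\Lambda$, followed by the fitted trace inequality on $\partial K$ and Young's inequality. Your deduction of \eqref{eq:trace_disc_lambda} from \eqref{eq:trace_cont_lambda} via the inverse inequality $\|\nabla v_h\|_{L^2(K)}\lesssim h_K^{-1}\|v_h\|_{L^2(K)}$ is also correct (the hidden constant depends on the fixed degree $k$, which is harmless). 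Note also that your observation that $h\leq h_0<\rho$ places all of $K\in\meshb$ inside the tubular neighborhood is right, since every point of such a $K$ lies within $h_K$ of $\partial B_\Lambda$.

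Two caveats, neither fatal. First, the uniform positive reach and curvature bound for $\partial B_\Lambda$ do not literally follow from the paper's stated hypotheses ($\bm\lambda\in C^2$, $A\in\mathcal{C}^1$, $a_0\leq A+P\leq a_1$): these constrain the centerline and the cross-sectional area and perimeter, not the regularity of the cross-sectional boundary curves, and $\partial B_\Lambda$ moreover has edges where the lateral surface meets the end caps of the generalized cylinder. Uniform $C^{1,1}$-type regularity of the surface is an implicit hypothesis here (inherited from the $C^2$ interface assumption in Wu--Xiao) and should be stated as an assumption rather than derived. Second, your closing remark misattributes the role of the condition that $\partial K\cap\partial B_\Lambda$ has zero two-dimensional measure: that condition excludes neither tangential contact nor non-Lipschitz cut regions, and Lipschitz regularity of $D=K\cap B_\Lambda$ is not actually needed, since the Gauss--Green theorem holds on $D$ as a set of finite perimeter (intersection of a polyhedron with a smooth domain), whose reduced boundary is contained, up to sets of zero two-dimensional measure, in $(K\cap\partial B_\Lambda)\cup(\partial K\cap B_\Lambda)$; in the paper that measure-zero assumption instead serves to make the lateral average well defined on broken spaces. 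With these two adjustments your argument is complete.
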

Hereinafter, we assume that $h \leq h_0$. We now show a global trace inequality. 
\begin{lemma}[Trace estimate] \label{lemma:trace_for_broken}
For $u \in H^1(\meshg)$,  there holds 
\begin{equation} \|\overline{u}\|_{L^2_P(\Lambda)} \lesssim \|u\|_{\dgg}. 
    \label{eq:boundubar}
 \end{equation}
\end{lemma}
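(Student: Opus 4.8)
The plan is to prove the estimate by a conforming/non-conforming splitting, since the continuous trace inequality \eqref{eq:trace_estimate_avg} is available only for genuine $H^1(\Omega)$ functions, and a \emph{direct} application of the local trace estimate \eqref{eq:trace_cont_lambda} to $u$ would produce the term $\sum_{K \in \meshb} h_K^{-1}\|u\|_{L^2(K)}^2$, which is \emph{not} controlled by $\|u\|_{\dgg}^2$ (a slowly varying, essentially piecewise constant $u$ shows the $L^2$-mass near $\Lambda$ need not be paid for by the dG seminorm, only by jumps). I would therefore introduce the conforming function $u_c := E\pi_h u \in \cgg \subset H^1_0(\Omega)$, where $\pi_h$ is the $L^2$-projection onto $\dgg$ (needed because the enriching map $\bm E$ of Lemma \ref{lemma:enrich} and its estimates \eqref{eq:stability_E}--\eqref{eq:approx_E} act on discrete functions), and write, using linearity of the averaging operator and the triangle inequality, $\normT{\overline u} \le \normT{\overline{u_c}} + \normT{\overline{w}}$ with $w := u - u_c$.

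For the conforming term I would apply \eqref{eq:trace_estimate_avg} to $u_c \in H^1_0(\Omega)$, giving $\normT{\overline{u_c}} \lesssim \|u_c\|_{H^1(\Omega)}$. The Poincar\'e inequality on $H^1_0(\Omega)$ bounds the full norm by the seminorm, and then the stability of the enriching map \eqref{eq:stability_E} together with the $\dgg$-stability of $\pi_h$ \eqref{eq:stability_l2_projection} and the broken Poincar\'e inequality \eqref{eq:Poincare} yield
\[
\normT{\overline{u_c}} \lesssim |E\pi_h u|_{H^1(\Omega)} \lesssim \|\pi_h u\|_{\dgg} \lesssim \|u\|_{H^1(\meshg)} \lesssim \|u\|_{\dgg}.
\]

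For the non-conforming remainder $w$, which is only broken $H^1$, I would first reproduce the elementary step behind the first inequality in \eqref{eq:trace_estimate_avg}: a Cauchy–Schwarz estimate on each cross-section boundary $\partial\Theta(s)$ requires no global regularity and gives $\normT{\overline w} \lesssim \|w\|_{L^2(\Gamma)}$, where the trace on $\Gamma = \partial B_\Lambda$ is taken elementwise and is well defined by the measure-zero assumption on $\partial K \cap \partial B_\Lambda$. Decomposing $\|w\|_{L^2(\Gamma)}^2 = \sum_{K \in \meshb}\|w\|_{L^2(\partial B_\Lambda \cap \overline K)}^2$ and invoking the local trace estimate \eqref{eq:trace_cont_lambda} on each $K$, I reduce matters to controlling $\sum_{K \in \meshb}\big(h_K^{-1}\|w\|_{L^2(K)}^2 + h_K\|\nabla w\|_{L^2(K)}^2\big)$ by $\|u\|_{\dgg}^2$. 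Splitting $w = (u - \pi_h u) + (\pi_h u - E\pi_h u)$, the gradient sum is closed using $h_K \le h_0$, the triangle inequality $\nabla w = \nabla u - \nabla u_c$, the definition of $\|\cdot\|_{\dgg}$, and \eqref{eq:stability_E}; for the $h_K^{-1}$-weighted sum I would use the projection approximation \eqref{eq:approximation_l2_projection} on $u - \pi_h u$ and the enriching approximation \eqref{eq:approx_E} on $\pi_h u - E\pi_h u$, in both cases exploiting $h_K^{-1} \le h_0\, h_K^{-2}$ to match the power of $h_K$ supplied by these estimates.

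The main obstacle is conceptual rather than computational: recognizing that the continuous trace inequality genuinely fails on broken functions and that the correct device is the conforming/non-conforming splitting, so that the non-conforming part is absorbed precisely because the enriching (and projection) approximation bounds carry the extra negative power of $h_K$ demanded by the local trace inequality \eqref{eq:trace_cont_lambda}. The only mild technical wrinkle is the preliminary $L^2$-projection $\pi_h$, inserted so that the discrete enriching map is applicable to a general $u \in H^1(\meshg)$.
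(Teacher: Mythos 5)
Your proposal is correct and follows essentially the same route as the paper: both arguments hinge on the composite conforming approximation $E\pi_h u$, bound its average via the continuous trace estimate \eqref{eq:trace_estimate_avg} together with the stability bounds \eqref{eq:stability_E}, \eqref{eq:stability_l2_projection} and Poincar\'e's inequality \eqref{eq:Poincare}, and control the remainder through the local trace estimates and the approximation properties \eqref{eq:approximation_l2_projection}, \eqref{eq:approx_E}. The only difference is organizational: the paper first proves the bound for discrete $v_h$ (using the polynomial trace estimate \eqref{eq:trace_disc_lambda} on $v_h - Ev_h$) and then extends to $u \in H^1(\meshg)$ via $\pi_h$, whereas you perform the splitting $u = E\pi_h u + (u - \pi_h u) + (\pi_h u - E\pi_h u)$ in one pass, which is the same decomposition up to regrouping and yields the same three estimates.
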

\begin{proof}
We start by showing the result for $v_h \in \mathbb{V}_h^{\Omega}$.  
Let $\Lambda_i = (s_{i-1},s_i)$. We use triangle and Cauchy--Schwarz's inequalities to obtain that for any $1\leq i\leq N$, 
    \begin{multline}
         \|\overline{v_h} \|_{L^2_P(\Lambda_i)} \leq \|\overline{v_h}  - \overline{E v_h}\|_{L^2_P(\Lambda_i)} + \|\overline{E v_h}\|_{L^2_P(\Lambda_i)}\leq \|v_h -Ev_h \|_{L^2(\partial B_i)} + \|\overline{E v_h}\|_{L^2_P(\Lambda_i)}.   \label{eq:global_trace_0_red}  
    \end{multline} 
Recall the definition of $\omega_i$ in \eqref{eq:def_w_EL} and note that 
 \begin{align}
   \|v_h -Ev_h \|_{L^2(\partial B_i)}^2 = \sum_{K\in \omega_i} \|v_h -Ev_h \|^2_{L^2(\partial B_i \cap  \bar{K}  )}.
 \end{align}
 With \eqref{eq:trace_disc_lambda},  we obtain that 
 \[
        \|v_h -Ev_h \|_{L^2(\partial B_i)}^2 \lesssim \sum_{K \in \omega_i}  h_{K}^{-1} \|v_h -Ev_h \|_{L^2(K)}^2. 
 \]
 Summing over $i$  and using the global bound \eqref{eq:approx_E} yield 
 \[
 \sum_{i=1}^N \|v_h -Ev_h \|_{L^2(\partial B_i)}^2 
 \lesssim \sum_{K\in\mathcal{T}_B^h} h_{K}^{-1} \|v_h -Ev_h \|_{L^2(K)}^2 
 \lesssim h \Vert v_h \Vert_{\dgg}^2.
 \]
 Therefore, with \eqref{eq:trace_estimate_avg}, we obtain 
 \begin{align}
\|\overline{v_h}\|_{L^2_P(\Lambda)}^2 =
 \sum_{i=1}^N \|\overline{v_h} \|_{L^2_P(\Lambda_i)}^2
 \lesssim h \Vert v_h \Vert_{\dgg}^2
 + \Vert \overline{Ev_h}\Vert_{L^2_P(\Lambda)}^2
 \lesssim h \Vert v_h \Vert_{\dgg}^2 + \Vert Ev_h \Vert_{H^1(\Omega)}^2.  \label{eq:trace_0}
 \end{align}
 With Poincar\'e's inequality \eqref{eq:Poincare}, and the properties of $E$ \eqref{eq:stability_E}--\eqref{eq:approx_E}, we obtain the bound 
 \[\Vert Ev_h \Vert_{H^1(\Omega)}^2  \leq  \|Ev_h - v_h\|_{L^2(\Omega)}^2 + \|v_h\|^2_{L^2(\Omega)} + |Ev_h|_{H^1(\Omega)}^2 \lesssim \|v_h\|^2_{\dgg}. \]
Substituting the above in \eqref{eq:trace_0} yields \eqref{eq:boundubar} for $v_h \in \mathbb V_h^{\Omega}$. Consider now $u \in H^{1}(\meshg)$ and recall that $\pi_h u$ is the local $L^2$ projection on $\mathbb V_h^\Omega$. Then, by Cauchy--Schwarz's inequality and \eqref{eq:trace_cont_lambda}, we have that 
\begin{multline*}
\|\overline{u} - \overline{\pi_h u }\|^2_{L^2_P(\Lambda_i)} \leq \sum_{K \in \omega_i } \|u - \pi_h u \|^2_{L^2(\partial B_i \cap \overline K)}  \\  \lesssim  \sum_{K \in \omega_i} \left(h_{K}^{-1} \| u - \pi_h u \|^2_{L^2(K)} + h_K\| \nabla (u - \pi_h u) \|^2_{L^2(K)}\right) \lesssim 
\sum_{K \in \omega_i} h_K \|u\|^2_{H^1(K)}. 
\end{multline*}
In the above, we used the properties of the $L^2$ projection given in \eqref{eq:approximation_l2_projection}. 
 Then, using triangle inequality and  \eqref{eq:boundubar} for $\mathbb V_h^{\Omega}$, we obtain 
\begin{align}
\|\overline{u}\|_{L^2_P(\Lambda)} \leq \|\overline{u} - \overline{\pi_h u}\|_{L^2_P(\Lambda)} + \|\overline{\pi_h u}\|_{L^2_P(\Lambda)} \lesssim h^{1/2} (  
\sum_{K \in \meshg} \|u\|^2_{H^1(K)})^{1/2}  + \|\pi_h u \|_{\dgg}.
 \end{align}
 The result is concluded by Poincar\'e's inequality \eqref{eq:Poincare} and the stability of the $L^2$ projection $\pi_h$ in the $\|\cdot \|_{\dgg}$ norm, see \eqref{eq:stability_l2_projection}.   
   \end{proof}
 A consequence of \eqref{eq:boundubar} and the triangle inequality is the following bound:
\begin{equation}\label{eq:bounduhat}
\forall \bm u = (u,\hat u) \in H^1(\meshg)\times H^1(\meshl), \quad
\Vert \hat u\Vert_{L_P^2(\Lambda)} \lesssim \Vert \bm u \Vert_{\DG}.
\end{equation}

We will make use of lift operators. For a given $(u,\hat{u}) \in H^1(\meshg) \times H^1(\meshl)$, define $(L_h u, \hat{L}_h \hat{u}) \in \dgg \times \dgl  $ such that 
\begin{align}
(L_h u, w_h)_{\Omega} + (\hat{L}_h \hat u, \hat w_h)_{L^2_P(\Lambda)} = b_{\Lambda}( \overline{u} - \hat{u}, \overline{w}_h - \hat{w}_h), \quad \forall (w_h, \hat{w}_h) \in \dgg \times \dgl. \label{eq:def_lift_operator}
\end{align}  The existence of $(L_h u, \hat{L}_h \hat{u})$ easily follows from uniqueness. We show the following estimate.  
\begin{lemma}[Lift operator] \label{lemma:lift_operator}Given $(u,\hat{u}) \in H^1(\meshg) \times H^1(\meshl)$, let $(L_h u, \hat{L}_h \hat{u}) \in \dgg \times \dgl$ be defined by  \eqref{eq:def_lift_operator}. There holds 
\begin{align}
\sum_{K \in \meshg} h_K \|L_h u\|^2_{L^2(K)}+ \|\hat{L}_h \hat u\|^2_{L^2_P(\Lambda)} \lesssim \|u\|^2_{\dgg} + \|\hat u\|^2_{L^2_{P}(\Lambda)}. \label{eq:stability_lift_operator}
\end{align}
\end{lemma}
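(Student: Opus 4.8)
The plan is to argue by duality, testing the defining relation \eqref{eq:def_lift_operator} against a suitably weighted copy of the lift itself. Writing $g_h = L_h u$ and $\hat g_h = \hat L_h \hat u$, I would choose the test pair $(w_h, \hat w_h) \in \dgg \times \dgl$ with $w_h|_K = h_K\, g_h|_K$ on each $K \in \meshg$ and $\hat w_h = \hat g_h$. This is an admissible discrete test function, and with this choice the left-hand side of \eqref{eq:def_lift_operator} collapses exactly to the quantity we wish to bound, namely
\[
S := \sum_{K \in \meshg} h_K \|g_h\|_{L^2(K)}^2 + \|\hat g_h\|_{L^2_P(\Lambda)}^2 .
\]

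Next I would estimate the right-hand side. Since $b_\Lambda$ is the $L^2_P$ inner product scaled by the constant $\xi$, the Cauchy--Schwarz inequality gives
\[
S = b_\Lambda(\overline u - \hat u, \overline{w}_h - \hat w_h) \lesssim \|\overline u - \hat u\|_{L^2_P(\Lambda)}\, \|\overline{w}_h - \hat w_h\|_{L^2_P(\Lambda)},
\]
so the crux is to show $\|\overline{w}_h - \hat w_h\|_{L^2_P(\Lambda)} \lesssim S^{1/2}$. The contribution $\|\hat w_h\|_{L^2_P(\Lambda)} = \|\hat g_h\|_{L^2_P(\Lambda)}$ is already part of $S$. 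For the averaged part I would use the localized form of \eqref{eq:trace_estimate_avg}, namely $\|\overline{w}_h\|_{L^2_P(\Lambda_i)} \le \|w_h\|_{L^2(\partial B_i)}$, followed by the discrete local trace estimate \eqref{eq:trace_disc_lambda} on each $K \in \omega_i$, which yields $\|w_h\|_{L^2(\partial B_i \cap \overline K)}^2 \lesssim h_K^{-1}\|w_h\|_{L^2(K)}^2$. Substituting $w_h|_K = h_K\, g_h|_K$ then converts $h_K^{-1}\|w_h\|_{L^2(K)}^2$ into $h_K \|g_h\|_{L^2(K)}^2$, precisely the weight appearing in $S$; this matching of the $h_K^{-1}$ from the inverse trace against the $h_K^2$ from the weighting is the reason for choosing the weight $h_K$ in the test function.

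The step I expect to require the most care is the summation over the sub-intervals. Summing $\|\overline{w}_h\|_{L^2_P(\Lambda_i)}^2$ over $i$ produces a sum over the pairs $(i,K)$ with $K \in \omega_i$, and the same element $K$ may be counted in several neighborhoods $\omega_i$. Here the finite-overlap hypothesis — that the cardinality of $\mathcal I_K$ is bounded uniformly in $K$ and $h$ — is essential: it lets me bound $\sum_i \sum_{K \in \omega_i} h_K \|g_h\|_{L^2(K)}^2$ by a constant times $\sum_{K \in \meshb} h_K\|g_h\|_{L^2(K)}^2 \le S$, with a constant independent of the mesh. Combining this with the previous display gives $\|\overline{w}_h - \hat w_h\|_{L^2_P(\Lambda)} \lesssim S^{1/2}$, whence $S \lesssim \|\overline u - \hat u\|_{L^2_P(\Lambda)}\, S^{1/2}$ and therefore $S^{1/2} \lesssim \|\overline u - \hat u\|_{L^2_P(\Lambda)}$. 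A final triangle inequality together with the global trace estimate \eqref{eq:boundubar} of Lemma \ref{lemma:trace_for_broken}, which bounds $\|\overline u\|_{L^2_P(\Lambda)}$ by $\|u\|_{\dgg}$, then yields $S \lesssim \|u\|_{\dgg}^2 + \|\hat u\|_{L^2_P(\Lambda)}^2$, which is the claimed estimate \eqref{eq:stability_lift_operator}.
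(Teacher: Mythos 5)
Your proof is correct, and it rests on exactly the same ingredients as the paper's: the defining relation \eqref{eq:def_lift_operator} tested against the lift itself, the discrete trace estimate \eqref{eq:trace_disc_lambda}, the finite-overlap assumption on $\mathcal{I}_K$, and the global trace bound \eqref{eq:boundubar} of Lemma \ref{lemma:trace_for_broken}. The difference is in the organization. The paper runs the argument in two stages: it first tests with $(0,\hat L_h \hat u)$ to bound the 1D term, and then, for the 3D term, tests with the localized functions $((L_h u)\chi_K,0)$ one element at a time, obtaining the local estimate $\|L_h u\|_{L^2(K)}\lesssim h_K^{-1/2}\sum_{i_0\in\mathcal{I}_K}\bigl(\|\overline u\|_{L^2_P(\Lambda_{i_0})}+\|\hat u\|_{L^2_P(\Lambda_{i_0})}\bigr)$, which is then weighted by $h_K$, summed over $K\in\meshb$, and closed with a final Cauchy--Schwarz step. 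You instead take the single global test pair with $w_h|_K=h_K\,L_h u|_K$ and $\hat w_h=\hat L_h\hat u$, whose built-in $h_K$-weighting makes the left-hand side of \eqref{eq:def_lift_operator} equal to the target quantity $S$ and exactly cancels the $h_K^{-1}$ produced by \eqref{eq:trace_disc_lambda}; this merges the two stages and the element-wise summation into the single cancellation $S\lesssim\|\overline u-\hat u\|_{L^2_P(\Lambda)}\,S^{1/2}$. What your version buys is brevity and less bookkeeping; what the paper's version buys is an explicit per-element bound on $L_h u$, which however is not used elsewhere, so nothing essential is lost. Two points you handled correctly that deserve emphasis: the weighted test function is admissible (each $h_K$ is constant on $K$, so $w_h\in\dgg$), and the double sum $\sum_i\sum_{K\in\omega_i}h_K\|L_h u\|^2_{L^2(K)}$ collapses to $\sum_{K\in\meshb}|\mathcal{I}_K|\,h_K\|L_h u\|^2_{L^2(K)}$, which is precisely where the uniform bound on $|\mathcal{I}_K|$ must be, and is, invoked.
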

\begin{proof}
Choosing $(w_h, \hat{w}_h) = (0, \hat{L}_h \hat u)$ in \eqref{eq:def_lift_operator} and using Cauchy-Schwarz's inequality and  \eqref{eq:boundubar},   we have 
\begin{align}
\|\hat{L}_h \hat u\|^2_{L^2_P(\Lambda)} \leq  \xi  \|\overline u - \hat u \|_{L^2_P(\Lambda)} \|\hat L_h \hat u\|_{L^2_P(\Lambda)} \lesssim (\|u\|_{\dgg} + \|\hat u \|_{L^2_P(\Lambda)})\|\hat L_h\hat u\|_{L^2_P(\Lambda)} . 
\end{align}
This shows the bound on the second term in \eqref{eq:stability_lift_operator}. Next, fix $K \in \meshb$ and recall that $\mathcal{I}_K$ be the set of integers $i_0$  such that $K \in \omega_{i_0}$ where we assume that the cardinality of $\mathcal{I}_K$ is bounded above by a small constant independent of $K$. 
 In \eqref{eq:def_lift_operator}, choose $ \hat{w}_h= 0 $ and $w_h = (L_h u)\chi_K$ where $\chi_K$ is the characteristic function on $K$.  We obtain
\begin{align} \label{eq:first_exp_lift}
\|L_h u \|_{L^2(K)} ^2 \lesssim 
 \sum_{i_0\in\mathcal{I}_K} (\|\overline{u}\|_{L^2_P((s_{i_0-1} , s_{i_0}))} + \|\hat u \|_{L^2_P((s_{i_0-1} , s_{i_0}))})  \|\overline{w}_h \|_{L^2_P((s_{i_0-1} , s_{i_0}))}. 
\end{align}
 We now use Cauchy--Schwarz's inequality, the observation that $w_h$ is locally supported in $K$, and trace inequality \eqref{eq:trace_disc_lambda}. We estimate  
\begin{align}
\|\overline{w}_h \|_{L^2_P((s_{i_0-1} , s_{i_0}))}  \leq \|w_h\|_{L^2(\partial B_{i_0})}  = \|L_h u \|_{L^2( \partial B_{i_0}  \cap \bar{K} )}
& \lesssim h_{K}^{-1/2} \|L_h u \|_{L^2(K) }. 
\end{align}
Thus, we conclude that 
\begin{equation}
    \|L_h u\|^2_{L^2(K)} \lesssim h_{K}^{-1/2}  \sum_{i_0\in\mathcal{I}_K}(\|\overline{u}\|_{L^2_P((s_{i_0-1} , s_{i_0}))} + \|\hat u \|_{L^2_P((s_{i_0-1} , s_{i_0}))})  \|L_h u\|_{L^2(K)}. 
\end{equation}
Summing the above bound over $K\in \meshb$ and using Cauchy-Schwarz's inequality yield  
\begin{align*}
  \sum_{K \in \meshb} h_{K} \|L_h u \|_{L^2(K)}^2 \lesssim (\|\overline{u}\|_{L^2_P(\Lambda)} + \|\hat{u}\|_{L^2_P(\Lambda)}) \left(  \sum_{K \in \meshb} h_{K} \|L_h u \|_{L^2(K)}^2 \right)^{1/2}. 
\end{align*}
With Lemma \ref{lemma:trace_for_broken} and with noting that $L_h u|_K = 0 $ for $K \notin \meshb$, we conclude the result.
\end{proof}
\subsection{Main result and proof outline}
The main convergence result reads as follows. 
\begin{theorem}\label{thm:main_result}
Let $\bm{u} = (u, \hat{u}) \in H^1_0 (\Omega) \times H^1(\Lambda)$  be the weak solution defined by \eqref{eq:weak_form_grouped}, and let $\bm{u}_h= (u_h, \hat{u}_h) \in \dgg \times \dgl$ be the discrete solution defined by \eqref{eq:discrete_form_dg}.  
Recall that $h_B = \max_{K \in \meshb} h_{K}$. The following estimate holds.  
\begin{multline} \label{eq:main_estimate}
\|\bm u - \bm u_h\|_{\DG} \lesssim \inf_{\bm{v} \in \dgg \times \dgl} \|\bm u - \bm v\|_{\DG}\\
+ h \|f- \pi_h f\|_{ L^2(\Omega) } + h_{\Lambda} \|\hat{f} - \hat \pi_h \hat f \|_{L^2_{A}(\Lambda)} + h_{B}^{1/2}\|\overline{u} - \hat u\|_{L^2_{P}(\Lambda)}. 
\end{multline}
\end{theorem}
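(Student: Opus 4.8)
The plan is to follow the medius analysis of Gudi, combining the coercivity of Lemma~\ref{lemma:coercivity} with the enriching operator of Lemma~\ref{lemma:enrich} to circumvent the failure of strong consistency, and to use the lift operator of Lemma~\ref{lemma:lift_operator} to control the coupling contribution. Fix an arbitrary $\bm{v}_h=(v_h,\hat v_h)\in\dgg\times\dgl$ and set $\bm{\chi}=(\chi,\hat\chi):=\bm{u}_h-\bm{v}_h$. By the triangle inequality $\|\bm{u}-\bm{u}_h\|_{\DG}\le\|\bm{u}-\bm{v}_h\|_{\DG}+\|\bm{\chi}\|_{\DG}$, so it suffices to bound the discrete error $\|\bm{\chi}\|_{\DG}$; the first summand will furnish the infimum after taking the infimum over $\bm{v}_h$ at the very end. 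I would start from coercivity \eqref{eq:coercivity} and the discrete equation \eqref{eq:discrete_form_dg} to obtain
\[
C_{\mathrm{coerc}}\|\bm{\chi}\|_{\DG}^2\le\mathcal{A}_h(\bm{\chi},\bm{\chi})=\mathcal{A}_h(\bm{u}_h,\bm{\chi})-\mathcal{A}_h(\bm{v}_h,\bm{\chi})=(f,\chi)+(\hat f,\hat\chi)_{L^2_A(\Lambda)}-\mathcal{A}_h(\bm{v}_h,\bm{\chi}).
\]

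The central device is to split $\bm{\chi}=\bm{E}\bm{\chi}+(\bm{\chi}-\bm{E}\bm{\chi})=:\bm{\psi}+\bm{\rho}$, where $\bm{\psi}=(E\chi,\hat E\hat\chi)\in\cgg\times\cgl$ is conforming and $\bm{\rho}\in\dgg\times\dgl$. For the conforming part I would use the continuous weak form \eqref{eq:weak_form_grouped} to write $(f,E\chi)+(\hat f,\hat E\hat\chi)_{L^2_A(\Lambda)}=\mathcal{A}(\bm{u},\bm{\psi})$, and then subtract $\mathcal{A}_h(\bm{v}_h,\bm{\psi})$, in which every term containing a jump or penalty of $\bm{\psi}$ vanishes because $\bm{\psi}$ is continuous and vanishes on $\partial\Omega$. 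What remains are differences of the form $\bm{u}-\bm{v}_h$ tested against $\bm{\psi}$ (including the coupling piece $b_\Lambda((\overline u-\hat u)-(\overline{v_h}-\hat v_h),\overline{E\chi}-\hat E\hat\chi)$); using continuity of $a_h$, $a_{\Lambda,h}$ and $b_\Lambda$, the stability bound \eqref{eq:stability_E} for $\bm{E}$, and the trace estimate \eqref{eq:boundubar}, these are all controlled by $\|\bm{u}-\bm{v}_h\|_{\DG}\,\|\bm{\chi}\|_{\DG}$.

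The remainder part $\bm{\rho}$ carries the two delicate contributions. The data-minus-form terms $(f,\rho)-a_h(v_h,\rho)$ and $(\hat f,\hat\rho)-a_{\Lambda,h}(\hat v_h,\hat\rho)$ I would treat by elementwise integration by parts, exposing the element residuals $f+\Delta v_h$ and its one-dimensional analogue; bounding them by $(\sum_K h_K^2\|f+\Delta v_h\|_{L^2(K)}^2)^{1/2}$ and splitting $f+\Delta v_h=(f-\pi_h f)+(\pi_h f+\Delta v_h)$, together with the approximation properties $(\sum_K h_K^{-2}\|\chi-E\chi\|_{L^2(K)}^2)^{1/2}\lesssim\|\chi\|_{\dgg}$ and $\|\hat\chi-\hat E\hat\chi\|_{L^2(\Lambda)}\lesssim h_\Lambda|\hat\chi|_{\dgl}$ from \eqref{eq:approx_E}, yields the oscillation terms $h\|f-\pi_h f\|_{L^2(\Omega)}$ and $h_\Lambda\|\hat f-\hat\pi_h\hat f\|_{L^2_A(\Lambda)}$, the leftover discrete residuals and face jumps being absorbed into $\|\bm{u}-\bm{v}_h\|_{\DG}$ via inverse estimates. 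The coupling contribution $-b_\Lambda(\overline{v_h}-\hat v_h,\overline\rho-\hat\rho)$ I would rewrite through $\overline{v_h}-\hat v_h=(\overline u-\hat u)-\big((\overline u-\overline{v_h})-(\hat u-\hat v_h)\big)$: the approximation piece is again $\|\bm{u}-\bm{v}_h\|_{\DG}$-controlled, while the remaining piece $-b_\Lambda(\overline u-\hat u,\overline\rho-\hat\rho)$ is rewritten exactly as $-(L_h u,\rho)_\Omega-(\hat L_h\hat u,\hat\rho)_{L^2_P(\Lambda)}$ by the lift definition \eqref{eq:def_lift_operator} (legitimate since $\bm{\rho}\in\dgg\times\dgl$). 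A Cauchy--Schwarz split weighted by $h_K^{\pm1/2}$, the lift bound \eqref{eq:stability_lift_operator} tracked in the sharper form $\lesssim\|\overline u-\hat u\|_{L^2_P(\Lambda)}$ (evident from its proof, since the lift depends on $(u,\hat u)$ only through $\overline u-\hat u$), and the fact that $L_h u$ is supported on $\meshb$ then deliver precisely $h_B^{1/2}\|\overline u-\hat u\|_{L^2_P(\Lambda)}\,\|\bm{\chi}\|_{\DG}$, the one-dimensional lift term being of higher order in $h_\Lambda$ and hence absorbed.

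Finally I would collect all estimates, each of the form $(\text{quantity})\times\|\bm{\chi}\|_{\DG}$, divide through by $\|\bm{\chi}\|_{\DG}$, combine with the triangle inequality, and take the infimum over $\bm{v}_h\in\dgg\times\dgl$. The main obstacle is exactly the loss of strong consistency flagged in the introduction to this section: since $u\notin H^{3/2+\eta}(\Omega)$, the form $a_h$ cannot be tested against $u$ and no Galerkin orthogonality is available. The enriching operator resolves this by transferring the consistency argument onto a conforming companion $\bm{\psi}$, while the lift operator is what makes the otherwise intractable coupling term $b_\Lambda(\overline u-\hat u,\cdot)$ against the nonconforming remainder estimable; extracting the correct $h_B^{1/2}$ scaling from the local trace inequality \eqref{eq:trace_disc_lambda} inside the lift bound, and verifying that the near-$\partial B_\Lambda$ residuals do not spoil the oscillation terms, is the crux of the argument.
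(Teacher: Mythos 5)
Your overall strategy is essentially the paper's: a medius analysis combining coercivity, the enriching map $\bm{E}$, and the lift operator. Your organization differs in two harmless (indeed slightly leaner) ways: you argue directly from coercivity applied to $\bm{u}_h-\bm{v}_h$ rather than invoking the abstract lemma of Gudi as in Lemma \ref{lemma:condition_N_2}, and you convert the coupling term using the lift of the \emph{exact} solution, $b_\Lambda(\overline u-\hat u,\overline\rho-\hat\rho)=(L_h u,\rho)_\Omega+(\hat L_h\hat u,\hat\rho)_{L^2_P(\Lambda)}$, together with the sharper lift bound $\sum_K h_K\|L_h u\|^2_{L^2(K)}+\|\hat L_h\hat u\|^2_{L^2_P(\Lambda)}\lesssim\|\overline u-\hat u\|^2_{L^2_P(\Lambda)}$ (which does follow from the proof of Lemma \ref{lemma:lift_operator}), whereas the paper folds $L_h v$ into the element residuals. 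Your treatment of the conforming part coincides with the verification of (N2)--(N3) inside Lemma \ref{lemma:condition_N_2}, and your weighted Cauchy--Schwarz argument for the lift pairing correctly produces the term $h_B^{1/2}\|\overline u-\hat u\|_{L^2_P(\Lambda)}\|\bm{\chi}\|_{\DG}$.

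The genuine gap is the claim that ``the leftover discrete residuals and face jumps [are] absorbed into $\|\bm{u}-\bm{v}_h\|_{\DG}$ via inverse estimates.'' Inverse estimates are statements about discrete functions alone and never involve $u$; they cannot yield bounds of the form
\[
h_K\|\pi_h f+\Delta v_h\|_{L^2(K)}\lesssim\|\nabla(u-v_h)\|_{L^2(K)}+\cdots,
\qquad
|F|^{1/4}\,\|[\nabla v_h]\cdot\bm{n}_F\|_{L^2(F)}\lesssim\cdots,
\]
because any such bound must use the equation linking $f$ to $u$. (Nor can you write $[\nabla v_h]=[\nabla(v_h-u)]$ and cancel, precisely because $u\notin H^{3/2+\eta}(\Omega)$ so traces of $\nabla u$ are not defined --- this is the consistency failure the whole analysis is designed to circumvent.) The indispensable mechanism, which is the actual content of Lemmas \ref{lemma:first_aposteriori} and \ref{lemma:second_aposteriori}, is the Verf\"urth bubble-function efficiency argument: multiply the residual by an element (or face) bubble $\psi$, integrate by parts, and test \eqref{eq:weak_form_grouped} with $(\psi,0)$ or $(0,\hat\psi)$. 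Once this is done, the bubble also sees the interface: for $K\in\meshb$ the coupling term $b_\Lambda(\overline u-\hat u,\overline\psi)$ appears and, via the trace estimate \eqref{eq:trace_disc_lambda} and the $L^2$-projection trick, contributes $h_K^{-1/2}\|\overline u-\hat u\|_{L^2_P(\Lambda_j)}\|\psi\|_{L^2(K)}$; after scaling and summing, the residual bounds themselves carry a second $h_B^{1/2}\|\overline u-\hat u\|_{L^2_P(\Lambda)}$ contribution. Your accounting attributes that term exclusively to the lift pairing, so the near-interface residual contributions that you yourself flag as ``the crux'' are exactly what your argument leaves unproved. With the bubble lemmas supplied (for element and face residuals, in both the 3D and 1D pieces), your proof closes and is in all essentials the paper's.
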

\begin{proof}
 Here, we present the main steps of the proof. The details are given    in the next section. We have, see Lemma \ref{lemma:condition_N_2} for the proof, 
     \begin{multline}
 \|\bm{u} - \bm{u}_h\|_{\DG}  \\ \lesssim \inf_{\bm{v} \in \dgg\times \dgl}  \left ( \|\bm{u} - \bm{v}\|_{\DG}  + \sup_{\bm{\phi} \in \dgg\times \dgl } \frac{(f,\phi - E \phi)_{\Omega} + (\hat f, \hat \phi - \hat E \hat \phi)_{L^2_{A}(\Lambda)} - \mathcal{A}_h(\bm{v}, \bm{\phi} - \bm{E} \bm \phi)}{\|\bm{\phi}\|_{\DG}}\right) .  \label{eq:conditionN2}
\end{multline}
We now bound the second term above. To this end, fix $\bm v , \bm \phi \in \dgg \times \dgl$ and let $\bm w =  \bm \phi - \bm E  \bm \phi$. Define $Z = (f, w)_{\Omega} + (\hat f, \hat w  )_{L_A^2(\Lambda)}  - \mathcal{A}_h (\bm v, \bm w)$.  With the lift operator \eqref{eq:def_lift_operator}, we write 
\begin{align}
Z = (f - L_h v , w)_{\Omega} + ( A\hat f - P\hat{L}_h  \hat{v}  , \hat w)_{\Lambda} - a_h(v, w) - a_{\Lambda, h}(\hat v, \hat w). 
\end{align} 
We integrate by parts the first term in $a_h(v,w)$ and the first term in $a_{\Lambda,h}(\hat v, \hat w)$.  We obtain 
\begin{align}
Z & =  \underbrace{\sum_{K \in \meshg} \int_K (f- L_h v + \Delta v) w + \sum_{i=1}^N \int_{s_{i-1}}^{s_i} (A \hat f - P \hat L_h \hat v +\mathrm{d}_s(A \, \mathrm{d}_s \hat v) ) \hat{w} }_{Z_1} \\ 
\nonumber & \quad \underbrace{- \sum_{F\in \Gamma_h} \int_F [\nabla v] \cdot \bm{n}_F \{w\} - \sum_{i=0}^{N} [A \, \mathrm{d}_s \hat v]_{s_i}\{\hat w\}_{s_i}}_{Z_2} +  Z_3  + Z_4, 
\end{align}
where $Z_3, Z_4$ are the remaining terms in $a_{h}(v,w)$ and $a_{\Lambda,h}(\hat v, \hat w)$ respectively. Namely, 
\begin{align}
Z_3 &= -\epsilon_1 \sum_{F \in \Gamma_h \cup \partial \Omega} \int_F  \{\nabla w\}\cdot \bm{n}_e [v]  - \sum_{F \in \Gamma_h \cup \partial \Omega} \int_F \frac{\sigma_\Omega}{ \vert F\vert^{1/2} } [v][w] , \\ 
Z_4 & =  - \epsilon_2   \sum_{i=1}^{N- 1}  \{A \, \mathrm{d}_s \hat w \}_{s_i} [\hat v]_{s_i} -  \sum_{i=1}^{N - 1}  \frac{\sigma_{\Lambda}}{h_\Lambda}  [\hat v]_{s_i} [\hat w]_{s_i}.  
\end{align}
We start by bounding $Z_3$ and $Z_4$. We note that $[u]= [E\phi] = 0 $ a.e. on $F \in \Gamma_h \cup \partial \Omega$ and that $[\hat u]_{s_i} = [\hat{E}\hat{\phi}]_{s_i} = 0, \,\,\, i \in \{1, \ldots N-1\}$.  We use standard applications of trace inequality for polynomials and Cauchy-Schwarz's inequality  to obtain
\begin{align}
    |Z_3| + |Z_4|  & \lesssim 
 \Vert w\Vert_{\dgg} \Vert v-u\Vert_{\dgg}
+ \Vert \hat{w} \Vert_{\dgl} \Vert \hat{v}-\hat{u}\Vert_{\dgl} \\
& \lesssim 
(\Vert \phi\Vert_{\dgg} + \vert E\phi \vert_{H^1(\Omega)}
+ \vert \hat{\phi}\vert_{\dgl} + \vert\hat{E} \hat{\phi}\vert_{H^1(\Lambda)}) \Vert \bm u -\bm v\Vert_{\DG} \Bk \nonumber \\
&\lesssim \|\bm \phi\|_{\DG} \|\bm u - \bm v\|_{\DG}.  \nonumber
\end{align}
In the last inequality above, we used 
the stability of $\bm E$ given in \eqref{eq:stability_E} of Lemma \ref{lemma:enrich},  and the definition of $\|\cdot \|_{\DG}$. 
For the term $Z_1$, we use Cauchy--Schwarz's inequality and the approximation properties of $\bm E$ \eqref{eq:approx_E}.  We use the notation $\Lambda_i = (s_{i-1}, s_i)$ and we  estimate 
\begin{align}\label{eq:T1}
(Z_1)^2 &\leq  \left( \sum_{K \in \meshg} h_{K}^2 \|f + \Delta v - L_h v\|^2_{L^2(K)} +  \sum_{i=1}^N h_{\Lambda}^2\| A \hat{f} + \mathrm{d}_s(A \, \mathrm{d}_s \hat v) -  P\hat{L}_h \hat v\|^2_{L^2(\Lambda_i)} \right) \\ \nonumber & \quad \quad  \times \left( \sum_{K \in \meshg} h_K^{-2} \|w\|^2_{L^2(K)}  +\sum_{ i = 1}^N  h_{\Lambda}^{-2}\|\hat w\|_{L^2(\Lambda_i)}^2 \right) \\
\nonumber &  \lesssim \left( \sum_{K \in \meshg} h_{K}^2 \|f + \Delta v -  L_h v\|^2_{L^2(K)} +  \sum_{i=1}^N h_{\Lambda}^2\| A \hat{f} + \mathrm{d}_s(A \, \mathrm{d}_s \hat v) -  P\hat{L}_h \hat v\|^2_{L^2(\Lambda_i)} \right)  \|\bm \phi\|^2_{\DG} \\  & : = (R^1_{\Omega} + R^1_\Lambda ) \|\bm \phi\|^2_{\DG}.  \nonumber
\end{align} 
For the term $Z_2$, we use standard applications of trace inequalities and \eqref{eq:approx_E}  to estimate 
\begin{align}\label{eq:T2}
(Z_2)^2  & \lesssim \left( \sum_{F \in \Gamma_h } |F|^{1/2} \|[\nabla v] \cdot \bm{n}_F\|_{L^2(F)}^2 + \sum_{ i = 0 }^N h_{\Lambda} [ A \, \mathrm{d}_s \hat v ]_{s_i}^2\right)  \\ & \quad \quad \times \nonumber   \left( \sum_{K \in \meshg} h_K^{-2} \|w\|^2_{L^2(K)}  +\sum_{i=1}^N h_\Lambda^{-2}\|\hat w\|_{L^2(\Lambda_i)}^2 \right)  \\ 
 \nonumber & \quad \lesssim \left( \sum_{F \in \Gamma_h } |F|^{1/2} \|[\nabla v] \cdot \bm{n}_F \|_{L^2(F)}^2 + \sum_{ i = 0 }^N h_{\Lambda} [A \, \mathrm{d}_s \hat v ]_{s_i}^2\right)   \|\bm \phi\|_{\DG}^2 : = (R^2_\Omega + R^2_\Lambda) \|\bm \phi\|_{\DG}^2 .
\end{align}
 Combining the bounds above we have
\[
\Vert\bm u-\bm u_h\Vert_{\DG} \leq \inf_{\bm v \in \dgg\times \dgl}
\left(\Vert \bm u-\bm v\Vert_{\DG} + (R_\Omega^1+R_\Lambda^1)^{1/2} + (R_\Omega^2+R_\Lambda^2)^{1/2}\right).
\]
The proof is finished by obtaining the required bounds on the residual $(R^1_{\Omega} + R^1_\Lambda )$, see Lemma \ref{lemma:first_aposteriori} and Corollary \ref{cor:first_aposterior}, and on the residual  $(R^2_\Omega + R^2_\Lambda)$, see Lemma \ref{lemma:second_aposteriori} and Corollary \ref{cor:second_aposterior}. 
\end{proof}
\begin{corollary}[Error rate] \label{cor:first_err_estimate} Under the assumptions of \Cref{thm:main_result},  if $u \in H^{3/2-\eta}(\Omega)$  for any $\eta>0$ and $
\hat u \in H^2_A(\Lambda)$, then the following bound holds
\begin{equation}
 \|\bm u - \bm u_h\|_{\DG} \lesssim h^{1/2-\eta} (\|u\|_{H^{3/2-\eta}(\Omega)}+ \|\overline{u} - \hat u \|_{L^2_P(\Lambda)} +  \|f\|_{L^2(\Omega)}) +  h_{\Lambda} (\|\hat u\|_{H^{2}_A(\Lambda)} + \|\hat f\|_{L^2_A(\Lambda)} ). 
\end{equation}
\end{corollary}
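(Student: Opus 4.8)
The plan is to invoke Theorem~\ref{thm:main_result} and estimate each of the four terms on the right-hand side of \eqref{eq:main_estimate} under the stated regularity. As the competitor in the infimum $\inf_{\bm v}\|\bm u - \bm v\|_{\DG}$ I would take the $L^2$-projection pair $\bm v = (\pi_h u, \hat\pi_h \hat u) \in \dgg \times \dgl$ (admissible once $k_1, k_2 \ge 1$), and I would dispatch the three source/coupling terms by elementary monotonicity of exponents together with the relations $h_B \le h$ and $h \le 1$.

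The bulk of the work is the best-approximation term, which I would split according to the three pieces of $\|\cdot\|_{\DG}$ in \eqref{eq:norm_dg_full}. For the 3D energy piece $\|u - \pi_h u\|_{\dgg}$, the elementwise gradient contributes $\sum_{K \in \meshg} \|\nabla(u-\pi_h u)\|_{L^2(K)}^2 \lesssim h^{1-2\eta}\|u\|_{H^{3/2-\eta}(\Omega)}^2$ by the fractional extension of the projection estimate \eqref{eq:approximation_l2_projection} with $s=3/2-\eta$, $m=1$, using subadditivity of the $H^{3/2-\eta}$ Gagliardo seminorm over $\meshg$. For the face penalty, I would use that $u \in H^{3/2-\eta}(\Omega) \subset H^1_0(\Omega)$ has vanishing jumps, so $[u-\pi_h u] = -[\pi_h u - u]$, and then apply the element trace inequality \eqref{eq:trace_cont_lambda} to each side followed by \eqref{eq:approximation_l2_projection}; since $|F| \approx h_K^2$, both resulting contributions scale like $h_K^{1-2\eta}\|u\|^2_{H^{3/2-\eta}(K)}$, so summing gives $\|u-\pi_h u\|_{\dgg} \lesssim h^{1/2-\eta}\|u\|_{H^{3/2-\eta}(\Omega)}$. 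The 1D seminorm $|\hat u - \hat\pi_h \hat u|_{\dgl}$ is treated identically with $s=2$, $m=1$ for the derivative part and with a one-dimensional trace at the nodes $s_i$ for the node jumps (using $\hat u \in H^2(\Lambda)\subset C^0$, hence $[\hat u]_{s_i}=0$), giving $\lesssim h_\Lambda \|\hat u\|_{H^2_A(\Lambda)}$. Finally the coupling piece $\|\overline{u-\pi_h u} - (\hat u - \hat\pi_h \hat u)\|_{L^2_P(\Lambda)}$ I split by the triangle inequality: the average term is controlled exactly as in the proof of Lemma~\ref{lemma:trace_for_broken} (summing $h_K^{-1}\|u-\pi_h u\|_{L^2(K)}^2 + h_K\|\nabla(u-\pi_h u)\|_{L^2(K)}^2$ over $\omega_i \subset \meshb$), yielding $\lesssim h_B^{1-\eta}\|u\|_{H^{3/2-\eta}(\Omega)}$, while $\|\hat u - \hat\pi_h\hat u\|_{L^2_P(\Lambda)} \lesssim h_\Lambda^2\|\hat u\|_{H^2_A(\Lambda)}$ by \eqref{eq:approximation_l2_projection} with $m=0$.

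For the remaining three terms I would use that the $L^2$-projections are contractions, so that $h\|f-\pi_h f\|_{L^2(\Omega)} \le h\|f\|_{L^2(\Omega)} \le h^{1/2-\eta}\|f\|_{L^2(\Omega)}$, that $h_\Lambda\|\hat f - \hat\pi_h\hat f\|_{L^2_A(\Lambda)} \le h_\Lambda\|\hat f\|_{L^2_A(\Lambda)}$, and that $h_B^{1/2}\|\overline u - \hat u\|_{L^2_P(\Lambda)} \le h^{1/2-\eta}\|\overline u - \hat u\|_{L^2_P(\Lambda)}$, where I repeatedly invoke $h_B \le h \le 1$ together with $1-\eta,\ 1/2 \ge 1/2-\eta$. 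Collecting all contributions, and noting $h_B^{1-\eta}\|u\|_{H^{3/2-\eta}(\Omega)} \le h^{1/2-\eta}\|u\|_{H^{3/2-\eta}(\Omega)}$, produces exactly the claimed estimate.

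The main obstacle I anticipate is the careful bookkeeping of fractional Sobolev exponents in the face-penalty and coupling contributions: one must justify the fractional-order version of the projection estimate \eqref{eq:approximation_l2_projection}, exploit subadditivity of the $H^{3/2-\eta}$ seminorm over the partition, and verify that the two terms coming from the element trace inequality \eqref{eq:trace_cont_lambda} combine to the same power $h_K^{1/2-\eta}$ rather than a worse one. The rest is routine, modulo the standing assumption $k_1, k_2 \ge 1$ needed for the interpolant to be admissible.
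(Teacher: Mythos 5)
Your proposal is correct in outline and the exponent bookkeeping checks out, but it takes a genuinely different route from the paper. The paper's proof is much shorter: it chooses as competitor the \emph{conforming} Scott--Zhang pair $(S_h u, \hat S_h \hat u) \in \cgg \times \cgl \subset \dgg \times \dgl$. Conformity makes every jump term in $\|\cdot\|_{\DG}$ vanish identically (both $u - S_h u$ and $\hat u - \hat S_h \hat u$ are single-valued, and $S_h$ preserves the homogeneous boundary condition), so the DG norm of the error collapses to $\|u - S_h u\|_{H^1(\Omega)} + |\hat u - \hat S_h \hat u|_{H^1_A(\Lambda)} + \|\hat u - \hat S_h \hat u\|_{L^2_P(\Lambda)}$, where the coupling term is dispatched by the \emph{continuous} trace estimate \eqref{eq:trace_estimate_avg} applied to $u - S_h u \in H^1(\Omega)$; the fractional rate then comes from the standard Scott--Zhang approximation theory, which is valid for fractional Sobolev indices out of the box. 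Your choice of the discontinuous $L^2$-projection forces you to do three extra pieces of work that the paper avoids entirely: (i) estimating the face-penalty jumps via scaled element trace inequalities (note that \eqref{eq:trace_cont_lambda} is stated for $\partial B_\Lambda \cap \overline K$, not for mesh faces, so you need the classical face version, which is fine but is not the cited inequality); (ii) estimating $\|\overline{u - \pi_h u}\|_{L^2_P(\Lambda)}$ by hand, with the bounded-overlap assumption on $\mathcal{I}_K$; and (iii) most substantively, a fractional-order extension of \eqref{eq:approximation_l2_projection}, since the paper states that lemma only for integer $s \in \{0,\dots,k+1\}$ — you correctly flag this, and the extension (Bramble--Hilbert in $H^{3/2-\eta}$ plus subadditivity of the Gagliardo seminorm over the partition, which does hold since the sum of double integrals over $K \times K$ is dominated by the double integral over $\Omega \times \Omega$) is known but is a real technical addition, not a one-line remark. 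Both routes handle the three data/coupling terms of \eqref{eq:main_estimate} the same way, via $L^2_P$-contractivity of the projections, norm equivalence from $A, P$ bounded above and below, and $h_B \le h \le 1$. In short: your argument works and is self-contained at the price of heavier machinery; the paper buys brevity by exploiting conformity of the interpolant so that no jump or discrete-trace estimates are ever needed.
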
 
\begin{proof}
Let  $\bm S_h \bm u = (S_h u, \hat S_h \hat u) \in \dgg \times \dgl$ where  $S_h$ and $\hat S_h$ are Scott--Zhang interpolants of $u$ and $\hat u$ respectively \cite{scott1990finite}. With the triangle inequality,  \eqref{eq:trace_estimate_avg}, and approximation properties,  we bound 
\begin{multline}
%\inf_{\bm{v} \in \dgg \times \dgl} 
\|\bm u - \bm S_h \bm u \|_{\DG}  \lesssim \|u - S_h u \|_{H^1(\Omega)} + |\hat u -\hat S_h \hat u |_{H^{1}_A(\Lambda)} + \|\hat u -\hat S_h \hat u \|_{L^2_P(\Lambda)}\\  \lesssim h^{1/2-\eta} \|u\|_{H^{3/2-\eta}(\Omega)}+  h_\Lambda \|\hat u\|_{H^{2}_A(\Lambda)}. 
\end{multline}
Using the above bound in \eqref{eq:main_estimate} yields the desired estimate.
\end{proof} 
We now show that if the mesh is refined near the boundary of $B_{\Lambda}$, (namely the mesh size is of the order $h^{2 k_1}$ where we recall $k_1$ is the polynomial degree for the space $\dgg$) then almost optimal error estimates can be recovered. To this end, we use the definitions of graded meshes \cite{apel1996graded,d2012finite,koppl2018mathematical} in order to obtain the required estimates. 
\begin{corollary}[Graded meshes] \label{cor:second_err_estimate} Let $r_K = \mathrm{dist}(K, \partial B_{\Lambda})$ and  recall that $h_K= \mathrm{diam}(K)$.  Suppose that the mesh satisfies the following grading property. 
\begin{equation} \label{eq:grading}
h_{K} \approx \begin{cases}
    h \, \,  r_K^{1-\frac{1}{2 k_1}}, & \mbox{ if } r_K > \frac12 h_K,  \\ 
    h^{2 k_1},  & \mathrm{otherwise} .  
\end{cases}
\end{equation}
Let $h_{\Lambda} \approx h_{K}, $ for $K \in \meshb$.  Assume that the assumptions of \Cref{thm:main_result} hold. Further,  assume that $u \in H^{k_1+1}(\Omega \backslash \overline{B_{\Lambda}}) \cap H^{k_1+1}( B_{\Lambda})$,  $\hat u \in H^{2}(\Lambda)$, $f \in H^{k_1-1}(\Omega)$, and $\hat{f} \in L^2(\Lambda)$.  Then,
\begin{align} \label{eq:improved_graded}
    \|\bm u - \bm u_h\|_{\DG} \lesssim  & h^{k_1 - 2\eta} (\|u\|_{H^{k_1+1}(B_{\Lambda})} +  \|u\|_{H^{k_1+1}(\Omega \backslash \overline{B_{\Lambda}})}+\|u\|_{H^{3/2-\eta}(\Omega)} + \|f\|_{H^{k_1-1}(\Omega)})   \\ \nonumber &+ h^{2k_1} (\|\hat u \|_{H^{2}(\Lambda)} + \|\hat f \|_{L^2(\Lambda)})  . 
\end{align}
\end{corollary}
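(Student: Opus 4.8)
The plan is to insert the abstract bound \eqref{eq:main_estimate} of Theorem \ref{thm:main_result} and estimate its four terms by exploiting two consequences of the grading \eqref{eq:grading}. First, every element $K\in\meshb$ touches $\partial B_{\Lambda}$, so $r_K=0$ and hence $h_K\approx h^{2k_1}$; this gives $h_B\approx h^{2k_1}$ and, since $h_\Lambda\approx h_K$ on $\meshb$, also $h_\Lambda\approx h^{2k_1}$. Second, for \emph{every} $K$ one has $h_K\lesssim h$: in the outer branch $r_K\le\diam(\Omega)=O(1)$ and the exponent $1-\tfrac{1}{2k_1}$ lies in $(0,1)$, while in the inner branch $h_K\approx h^{2k_1}\le h$. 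With these facts the three non-best-approximation terms are immediate. Using \eqref{eq:approximation_l2_projection} with $s=k_1-1$ together with $h_K\lesssim h$ gives $h\,\|f-\pi_h f\|_{L^2(\Omega)}\lesssim h^{k_1}\|f\|_{H^{k_1-1}(\Omega)}$; stability of $\hat\pi_h$ and $h_\Lambda\approx h^{2k_1}$ give $h_\Lambda\|\hat f-\hat\pi_h\hat f\|_{L^2_A(\Lambda)}\lesssim h^{2k_1}\|\hat f\|_{L^2(\Lambda)}$; and $h_B^{1/2}\approx h^{k_1}$ with the trace bound \eqref{eq:trace_estimate_avg} and boundedness of the weights yields $h_B^{1/2}\|\overline u-\hat u\|_{L^2_P(\Lambda)}\lesssim h^{k_1}(\|u\|_{H^{3/2-\eta}(\Omega)}+\|\hat u\|_{H^2(\Lambda)})$.

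The crux is the best-approximation term $\inf_{\bm v}\|\bm u-\bm v\|_{\DG}$, which I would bound by choosing the \emph{continuous} Scott--Zhang pair $\bm v=\bm S_h\bm u$ as in Corollary \ref{cor:first_err_estimate}. Since $u$ and its interpolant are continuous, all jump contributions in $\|u-S_h u\|_{\dgg}$ vanish, leaving only $\sum_K\|\nabla(u-S_h u)\|^2_{L^2(K)}$. I would then split the elements according to their Scott--Zhang patch $\tilde\omega_K$: let $\mathcal G_1$ consist of those whose patch meets $\partial B_{\Lambda}$ and $\mathcal G_2$ the remainder. By the grading, every $K\in\mathcal G_1$ lies within one element-layer of $\partial B_{\Lambda}$, so $r_K\lesssim h_K$ and $h_K\approx h^{2k_1}$, whereas on $\mathcal G_2$ the patch lies entirely in $B_{\Lambda}$ or in $\Omega\setminus\overline{B_{\Lambda}}$, where $u\in H^{k_1+1}$.

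On $\mathcal G_1$ only the global regularity $u\in H^{3/2-\eta}$ is available, so the element-wise Scott--Zhang estimate gives $\|\nabla(u-S_h u)\|_{L^2(K)}\lesssim h_K^{1/2-\eta}|u|_{H^{3/2-\eta}(\tilde\omega_K)}$; summing with $h_K\approx h^{2k_1}$ and bounded patch overlap yields a contribution $\lesssim h^{2k_1(1/2-\eta)}\|u\|_{H^{3/2-\eta}(\Omega)}=h^{k_1-2k_1\eta}\|u\|_{H^{3/2-\eta}(\Omega)}$, which is of the claimed form once the arbitrary $\eta>0$ is rescaled (replacing $k_1\eta$ by $\eta$). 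On $\mathcal G_2$ the full-order estimate $\|\nabla(u-S_h u)\|_{L^2(K)}\lesssim h_K^{k_1}|u|_{H^{k_1+1}(\tilde\omega_K)}$ combined with $h_K\lesssim h$ and bounded overlap gives $\lesssim h^{k_1}(\|u\|_{H^{k_1+1}(B_{\Lambda})}+\|u\|_{H^{k_1+1}(\Omega\setminus\overline{B_{\Lambda}})})$. The $1$D and coupling parts of $\|\bm u-\bm S_h\bm u\|_{\DG}$ are controlled by $h_\Lambda\|\hat u\|_{H^2(\Lambda)}\approx h^{2k_1}\|\hat u\|_{H^2(\Lambda)}$ and, through Lemma \ref{lemma:trace_for_broken}, by the $3$D bound already obtained. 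Collecting the four estimates produces \eqref{eq:improved_graded}.

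The main obstacle I anticipate is the bookkeeping of the graded sums rather than any single inequality: one must verify that the two-branch definition \eqref{eq:grading} is self-consistent at the crossover $r_K\approx h_K\approx h^{2k_1}$ (which is precisely what the exponent $1-\tfrac{1}{2k_1}$ guarantees, since $h\,r_K^{1-1/(2k_1)}\approx r_K$ forces $r_K\approx h^{2k_1}$), and that the number of elements in each dyadic band of distance to $\partial B_{\Lambda}$ stays controlled, so that the patch overlaps remain uniformly bounded and the sums over $\mathcal G_1$ and $\mathcal G_2$ converge with the stated powers of $h$. The secondary care is the $\eta$-relabelling on $\mathcal G_1$, which turns the geometric loss $h^{2k_1(1/2-\eta)}$ into the advertised $h^{k_1-2\eta}$.
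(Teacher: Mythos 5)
Your proposal is correct and follows the paper's global strategy---insert \eqref{eq:main_estimate}, observe that every $K\in\meshb$ falls in the inner branch of \eqref{eq:grading} so that $h_B\approx h_\Lambda\approx h^{2k_1}$, and bound the data and coupling terms exactly as you do---but your handling of the best-approximation term is a genuinely different implementation of the same near/far dichotomy. The paper builds a \emph{mixed, discontinuous} interpolant $I_h u$: Scott--Zhang on elements with $r_K\le\tfrac12 h_K$ (where only $H^{3/2-\eta}$ regularity is usable) and the elementwise $L^2$ projection $\pi_h u$ on elements with $r_K>\tfrac12 h_K$ (each such element lies entirely in $B_\Lambda$ or in $\Omega\setminus\overline{B_\Lambda}$, so elementwise $H^{k_1+1}$ regularity applies); because $I_h u$ is discontinuous, the jumps in $\|u-I_h u\|_{\dgg}$ are absorbed through the bound $\|u-I_h u\|^2_{\dgg}\lesssim\sum_{K}\bigl(|u-I_h u|^2_{H^1(K)}+h_K^{-2}\|u-I_h u\|^2_{L^2(K)}\bigr)$, which is why the scaled $L^2$ terms appear there. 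You instead keep the globally continuous Scott--Zhang interpolant everywhere (so all jump terms vanish, including on $\partial\Omega$ provided $S_h$ preserves the homogeneous Dirichlet condition) and move the splitting from elements to Scott--Zhang \emph{patches}: full-order estimates on $\mathcal{G}_2$, low-regularity estimates on $\mathcal{G}_1$. The price is the extra geometric step you correctly identify: an element whose patch meets $\partial B_\Lambda$ need not satisfy $r_K\le\tfrac12 h_K$ itself, so you need $r_K\lesssim h_K$ (local comparability of graded element sizes) together with the crossover consistency $h\,r_K^{1-1/(2k_1)}\approx r_K\Rightarrow r_K\approx h^{2k_1}$ to get $h_K\lesssim h^{2k_1}$ on $\mathcal{G}_1$; the paper's route avoids any statement about neighboring elements because the $L^2$ projection is purely local, while yours avoids discontinuous interpolants and jump bookkeeping altogether. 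Two shared imprecisions are worth noting, neither of which counts against you: both arguments produce $h^{k_1-2k_1\eta}$ near the boundary, and your explicit $\eta$-relabelling is exactly what the paper does silently when it writes $h^{-2\eta}$ where its derivation gives $h^{-2k_1\eta}$; similarly, the coupling term yields $h^{k_1}\|\hat u\|_{L^2_P(\Lambda)}$, which does not literally fit the $h^{2k_1}$ bracket of \eqref{eq:improved_graded}---this is inherited from the paper's own final step ("along with \eqref{eq:trace_estimate_avg}, this concludes the proof") rather than a defect of your argument.
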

\begin{proof}
Define an interpolant $I_h u \in \dgg$ such that $I_h u|_K = S_h u \vert_K$, the Scott--Zhang interpolant restricted to $K$, if $r_{K} \leq \frac12 h_K$ and $I_h u|_K = \pi_h u$, the local $L^2$ projection, otherwise. We use the local approximation properties of the Scott--Zhang interpolant. Namely,  we have that \cite{scott1990finite}
\begin{equation}
|u - S_h u |_{H^1(K)} + h_K^{-1} \|u - S_h u \|_{L^2(K)}   \lesssim h_K^{\min(k_1+1,s) - 1} \|u\|_{H^s(\Delta_K)},  \quad  1 \leq s \leq k_1+1.   
\end{equation}
In the above, $\Delta_K$ is the union of elements sharing a face with $K$. Hence, we obtain that 
\begin{multline}
\sum_{K \in \meshg, r_K \leq \frac12 h_K} \left(  |u - I_h u|^2_{H^1(K)} + h_K^{-2} \|u - I_h u \|^2_{L^2(K)} \right) \\ \lesssim \sum_{K \in \meshg, r_K \leq \frac12 h_K} h_K^{1 - 2\eta}  \|u\|^2_{H^{3/2-\eta}(\Delta_K)}   \lesssim h^{2k_1(1-2\eta)} \|u\|^2_{H^{3/2-\eta}(\Omega)}. 
\end{multline}
Further, using the approximation properties of the $L^2$ projection, we obtain 
\begin{multline}
\sum_{K \in \meshg, r_K >  \frac12 h_K} \left( |u - I_h u |^2_{H^1(K)} + h_K^{-2} \|u - I_h u \|^2_{L^2(K)} \right)  \\ 
\lesssim \sum_{K \in \meshg, r_K >  \frac12 h_K } h^{2k_1} \|u\|^2_{H^{k_1+1}(K) } \lesssim h^{2k_1} (\|u\|^2_{H^{k_1+1}(B_{\Lambda})} +  \|u\|^2_{H^{k_1+1}(\Omega \backslash \overline{B_{\Lambda}})}).   
\end{multline}
In the above, we also used that $r_K \lesssim \diam(\Omega)$. Now, note that 
\begin{align}
\|u -I_h u \|^2_{\dgg} \lesssim \sum_{K \in \meshg } \left(  |u - I_h u |^2_{H^1(K)} + h_K^{-2} \|u - I_h u \|^2_{L^2(K)} \right). 
\end{align}
Define $\bm{I}_h \bm u = (I_h u, \hat S_h \hat u)$. We use the above bounds, triangle inequality, \eqref{eq:boundubar}, and approximation properties of $\hat S_h$ to obtain that 
\begin{align}
\|\bm u &- \bm I_h \bm u\|_{\mathrm{DG}} \lesssim \|u -I_h u \|_{\dgg}  + |\hat u - \hat S_h \hat u |_{\dgl}+ \|\hat u - \hat S_h \hat u\|_{L_P^2(\Lambda)} \\ &\lesssim h^{k_1} (\|u\|_{H^{k_1+1}(B_{\Lambda})} +  \|u\|_{H^{k_1+1}(\Omega \backslash \overline{B_{\Lambda}})} + h^{-2\eta} \|u\|_{H^{3/2-\eta}(\Omega)})  + h^{2k_1} \|\hat u \|_{H^{2} (\Lambda)} . \nonumber  
\end{align}
In the above, we used the assumption that $h_{\Lambda} \approx h_{K}$ for $K \in \meshb$ and thus $h_{\Lambda} \approx h^{2k_1}. $
The above bound estimates the first term of \eqref{eq:main_estimate}. The second and third terms in \eqref{eq:main_estimate} are bounded by the approximation properties of the $L^2$ projections, see \eqref{eq:approximation_l2_projection}. Finally, the last term in \eqref{eq:main_estimate} is controlled by observing that $r_K \lesssim \frac12 h_K, \,\, \forall K \in \mathcal{T}^h_{B}$. Thus, using \eqref{eq:grading}, $h_B \lesssim h^{2k_1}. $ Along with \eqref{eq:trace_estimate_avg}, this concludes the proof.  
\end{proof}
\subsection{Proof details} We now provide details for the steps given in the proof of Theorem \ref{thm:main_result}. \begin{lemma}\label{lemma:condition_N_2}
Let $\bm{u}$ and $\bm{u}_h$ be the solutions of  \eqref{eq:weak_form_grouped} and \eqref{eq:discrete_form_dg} respectively.  Then, \eqref{eq:conditionN2} holds. 
\end{lemma}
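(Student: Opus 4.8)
The plan is to establish \eqref{eq:conditionN2} by a medius-type argument: I would combine the triangle inequality with the coercivity of $\mathcal{A}_h$ (Lemma \ref{lemma:coercivity}) and recover consistency \emph{only on the conforming part} of the test function, using the enriching operator $\bm{E}$ of Lemma \ref{lemma:enrich}. First I fix an arbitrary $\bm{v} \in \dgg \times \dgl$ and write
$$\|\bm{u} - \bm{u}_h\|_{\DG} \leq \|\bm{u} - \bm{v}\|_{\DG} + \|\bm{v} - \bm{u}_h\|_{\DG},$$
so that it suffices to bound the purely discrete quantity $\bm{\phi} := \bm{v} - \bm{u}_h \in \dgg \times \dgl$. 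Applying coercivity to $\bm{\phi}$ gives $C_{\mathrm{coerc}}\|\bm{\phi}\|_{\DG}^2 \leq \mathcal{A}_h(\bm{\phi}, \bm{\phi})$, and since $\bm{u}_h$ solves \eqref{eq:discrete_form_dg} one has $\mathcal{A}_h(\bm{u}_h, \bm{\phi}) = (f,\phi)_\Omega + (\hat{f}, \hat{\phi})_{L^2_A(\Lambda)}$, so by bilinearity
$$\mathcal{A}_h(\bm{\phi}, \bm{\phi}) = \mathcal{A}_h(\bm{v}, \bm{\phi}) - (f,\phi)_\Omega - (\hat{f}, \hat{\phi})_{L^2_A(\Lambda)}.$$

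Next I insert the enriching operator by writing $\bm{\phi} = (\bm{\phi} - \bm{E}\bm{\phi}) + \bm{E}\bm{\phi}$ in every occurrence above, splitting the right-hand side into two groups. The group collecting the terms in $\bm{\phi} - \bm{E}\bm{\phi}$ is exactly the negative of the numerator appearing in \eqref{eq:conditionN2}; hence it is bounded by $\big(\sup_{\bm{\psi} \in \dgg\times\dgl}(\cdot)\big)\|\bm{\phi}\|_{\DG}$, where the supremum is over a dummy $\bm{\psi}$ to avoid clashing with $\bm{\phi}=\bm{v}-\bm{u}_h$. The group collecting the terms in $\bm{E}\bm{\phi}$ is treated using exact consistency: since $\bm{E}\bm{\phi} \in \cgg \times \cgl \subset H_0^1(\Omega)\times H_A^1(\Lambda)$ is conforming, I may test the weak formulation \eqref{eq:weak_form_grouped} against it to replace $(f,E\phi)_\Omega + (\hat{f}, \hat{E}\hat{\phi})_{L^2_A(\Lambda)}$ by $\mathcal{A}(\bm{u}, \bm{E}\bm{\phi})$, reducing this group to $\mathcal{A}_h(\bm{v}, \bm{E}\bm{\phi}) - \mathcal{A}(\bm{u}, \bm{E}\bm{\phi})$.

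Finally I would show $|\mathcal{A}_h(\bm{v}, \bm{E}\bm{\phi}) - \mathcal{A}(\bm{u}, \bm{E}\bm{\phi})| \lesssim \|\bm{u} - \bm{v}\|_{\DG}\,\|\bm{\phi}\|_{\DG}$. Because $\bm{E}\bm{\phi}$ is single-valued, its jumps vanish, so the penalty term and the symmetric consistency term carrying $[\bm{E}\bm{\phi}]$ drop out of $a_h$ and $a_{\Lambda,h}$, leaving the broken volume/gradient contributions, the $b_\Lambda$-coupling term, and the adjoint-consistency terms weighted by $\epsilon_1,\epsilon_2$. The volume and coupling parts are handled by Cauchy--Schwarz together with \eqref{eq:trace_estimate_avg}, \eqref{eq:bounduhat}, and the stability of $\bm{E}$ in \eqref{eq:stability_E}; the crucial observation is that $u\in H^1_0(\Omega)$ and $\hat{u}\in H^1(\Lambda)$ are single-valued, so each jump of $\bm{v}$ coincides with a jump of $\bm{v}-\bm{u}$ and therefore contributes to $\|\bm{u}-\bm{v}\|_{\DG}$. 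The step I expect to be the main obstacle is controlling the adjoint-consistency terms $\sum_{F}\int_F \{\nabla E\phi\}\cdot\bm{n}_F [v-u]$ (and their $1$D analogue): this needs a discrete trace inequality to bound $\sum_{F}|F|^{1/2}\|\{\nabla E\phi\}\cdot\bm{n}_F\|_{L^2(F)}^2 \lesssim |E\phi|_{H^1(\Omega)}^2$, which is then paired with the weighted jump seminorm inside $\|\cdot\|_{\dgg}$ and closed via $\bm{E}$-stability. Collecting the two groups, dividing by $\|\bm{\phi}\|_{\DG}$, and taking the infimum over $\bm{v}$ yields \eqref{eq:conditionN2}.
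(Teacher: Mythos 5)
Your proposal is correct and is essentially the paper's own argument: the paper proves \eqref{eq:conditionN2} by verifying hypotheses (N1)--(N3) of the abstract framework in \cite[Lemma 2.1]{gudi2010new} and then invoking that lemma, while you re-derive that abstract lemma inline --- your coercivity step is (N1), your final estimate $|\mathcal{A}_h(\bm{v},\bm{E}\bm{\phi})-\mathcal{A}(\bm{u},\bm{E}\bm{\phi})|\lesssim\|\bm{u}-\bm{v}\|_{\DG}\,\|\bm{\phi}\|_{\DG}$ is the paper's (N2) composed with its enrichment-stability bound (N3), and your treatment of the adjoint-consistency face terms via the polynomial trace inequality matches the paper's verification of (N2). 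The only detail to make explicit in a full write-up is that the coupling and volume terms require the full-norm bound $\|E\phi\|_{H^1(\Omega)}\lesssim\|\phi\|_{\dgg}$ (Poincar\'e \eqref{eq:Poincare} together with the approximation property \eqref{eq:approx_E}, not the seminorm stability \eqref{eq:stability_E} alone), exactly as in the paper's verification of (N3).
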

\begin{proof}
The proof follows from the abstract framework given in \cite[Lemma 2.1]{gudi2010new}. In the notation of this Lemma, we set $V = H_0^1(\Omega) \times H^1_A(\Lambda)$, $\|\bm{v}\|^2_{V} = \|v\|^2_{H^1(\Omega)} + \|v\|^2_{H^1_A(\Lambda)}$, and $\|\cdot \|_{h} = \|\cdot\|_{\DG}$. We verify assumptions (N1)--(N3) of \cite{gudi2010new}. Observe that assumption (N1) is the coercivity estimate of Lemma \ref{lemma:coercivity}. We now verify assumption (N3) which states that 
\begin{equation}
\|\bm E \bm v\|_{V} \lesssim \|\bm{v}\|_{\DG}, \quad \forall \bm v \in  \dgg \times \dgl.  \label{eq:estimate_N3}
\end{equation}
Let $\bm{v} = (v, \hat v ) \in \dgg \times \dgl$. From Lemma \ref{lemma:enrich}, we have 
\begin{equation}
\|E  v\|_{H^1(\Omega) } + \|\hat E \hat v\|_{H^1_A(\Lambda)} \lesssim \|v\|_{L^2(\Omega)} + \|\hat v\|_{L_A^2(\Lambda)} + \|v\|_{\dgg} + |\hat v|_{\dgl}. 
 \end{equation}
For the first term above, we use Poincar\'e's inequality \eqref{eq:Poincare}. For the second term, we use the fact that $A,P > 0 $, triangle inequality, and trace inequality (Lemma \ref{lemma:trace_for_broken}): 
\begin{equation} \label{eq:poinccare_type_hatv}
\|\hat v\|_{L^2_A(\Lambda)} \lesssim \|\hat v - \overline{v}\|_{L^2_P(\Lambda)} + \|\overline{v}\|_{L^2_P(\Lambda)} \lesssim \|\hat v - \overline{v}\|_{L^2_P(\Lambda)} + \|v\|_{\dgg}.
\end{equation}
Therefore, we obtain that 
\begin{align}
\|E  v\|_{H^1(\Omega) } + \|\hat E \hat v\|_{H^1_A(\Lambda)} \lesssim  \|v\|_{\dgg} + \|\hat v\|_{\dgl} + \|\overline{v} - \hat v\|_{L^2_P(\Lambda)} \lesssim \|\bm{v}\|_{\DG}.
 \end{align}
Hence, \eqref{eq:estimate_N3} is verified.  It remains to verify (N2). We show that for $\bm{v} \in H_0^1(\Omega) \times H^1(\Lambda)$, $\bm{v}_h \in \dgg\times \dgl$, and $\bm{w} \in \cgg \times \cgl $, there holds
\begin{equation}
\mathcal{A}(\bm{v}, \bm{w}) - \mathcal{A}_h( \bm v_h, \bm{w}) \lesssim \|\bm{v} - \bm{v}_h\|_{\DG} (\|w\|_{H^1(\Omega)}^2 + \|\hat w\|_{H^1_A(\Lambda)}^2)^{1/2}. \label{eq:N2}
\end{equation}
For this, observe that $[v]= [w] = 0$ a.e. on $e \in \Gamma_h \cup \partial \Omega$. Thus, we have that
\begin{align*}
a(v,w) - a_h(v_h,w) = \sum_{K \in \meshg} \int_K \nabla (v- v_h) \cdot \nabla w -  \epsilon_1 \sum_{F \in \Gamma_h \cup \partial \Omega}  \int_F \{\nabla w\} \cdot \bm{n}_F [v_h - v]. 
\end{align*} 
With the trace estimate for polynomials  
$$|F|^{1/4} \|\{\nabla w\}\cdot \bm{n}_F\|_{L^2(F)} \lesssim \|\nabla w \|_{L^2(K_F^1 \cup K_F^2)}, \quad F =  \partial K^1_F \cap \partial K_F^2, $$ and  Cauchy-Schwarz's inequality, we obtain that 
\begin{equation}
a(v,w) - a_h(v_h,w) \lesssim \|v_h - v\|_{\dgg} |w|_{H^1(\Omega)}.
\end{equation} 
A similar argument shows that 
\begin{equation}
a_{\Lambda}(\hat v , \hat w) - a_{\Lambda, h}(\hat v_h, \hat{w}) \lesssim |\hat{v}_h - \hat{v}|_{\dgl}| \hat{w}  |_{H^1_A(\Lambda)}.   
\end{equation}
For the remainder terms, we simply use Cauchy-Schwarz's inequality and the trace estimate \eqref{eq:trace_estimate_avg}. Indeed, we have that 
\begin{align*}
b_{\Lambda}(\overline{v}& - \hat v , \overline{w} - \hat w ) - b_{\Lambda} (\overline{v}_h - \hat{v}_h, \overline{w} -\hat w) \leq \xi  \|\overline{v} - \overline{v}_h - ( \hat  v - \hat v_h) \|_{L^2_{P}(\Lambda)} 
\|\overline{w} - \hat w \|_{L^2_P(\Lambda)}\\ & \lesssim   \|\bm{v} - \bm{v}_h\|_{\DG} (\|w\|_{H^1(\Omega)} + \|\hat w \|_{L^2_{P}(\Lambda)}) \lesssim  \|\bm{v} - \bm{v}_h\|_{\DG} (\|w\|_{H^1(\Omega)} + \|\hat w \|_{H^1_A(\Lambda)}). 
\end{align*}
Estimate \eqref{eq:N2} follows  by combining the above bounds. The proof is finished by applying \cite[Lemma 2.1]{gudi2010new}. 
\end{proof}
We now show the first residual bound.
We recall that for any $K\in\mathcal{T}_B^h$, the set
$\mathcal{I}_K$ denotes the set of integers $i_0$  such that $K \in \omega_{i_0}$.
\begin{lemma}[Bound on local residuals over elements] \label{lemma:first_aposteriori}  Fix  $1 \leq i \leq N$ and recall that $\Lambda_i = (s_{i-1},s_i)$.  For all $v_h \in \dgg$ and any $K \in \omega_i$, there holds 
\begin{multline}
 \|f + \Delta v -  L_h v\|^2_{L^2(K)}  \lesssim  h_{K}^{-2} \|\nabla (u-v)\|_{L^2(K)}  + h_{K}^{-1}  \sum_{j \in\mathcal{I}_K}  \| \overline{u} - \hat u \|^2_{L^2_{P}(\Lambda_j)} \\ +  \|L_h (u -v)\|^2_{L^2(K)}  + \|\pi_h f - f\|^2_{L^2(K)}. \label{eq:local_aposteriori_1}
 \end{multline}
For any $\hat v_h \in \dgl$, there holds 
\begin{multline}
 \|A\hat{f} + \mathrm{d}_s( A \, \mathrm{d}_s \hat v) - P \hat{L}_h \hat v\|^2_{L^2(\Lambda_i)} \lesssim  h_{\Lambda}^{-2}\|\mathrm{d}_s (\hat{u} - \hat{v})\|^2_{L_A^2(\Lambda_i)}  + \|\overline{u} - \hat u\|^2_{L^2_{P}(\Lambda_i)}\\  + \|\hat L_h (\hat u - \hat v)  \|^2_{L^2_P(\Lambda_i)} + \|\hat \pi_h \hat f - \hat f\|^2_{L_A^2(\Lambda_i)}. \label{eq:local_aposteriori_2_element}
\end{multline}
\end{lemma}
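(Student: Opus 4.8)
The plan is to bound the residual functionals by inserting the weak-form information that $\bm{u}$ satisfies, so that the source terms $f$ and $A\hat f$ get replaced by differential operators applied to the exact solution. The key observation is that equations \eqref{eq:pde_1}--\eqref{eq:pde_2} hold in a suitable weak sense, and that the lift operator $(L_h u, \hat L_h \hat u)$ defined by \eqref{eq:def_lift_operator} encodes precisely the coupling term $b_\Lambda(\overline{u}-\hat u, \cdot)$. Concretely, I would first test the weak formulation \eqref{eq:weak_form_grouped} against discrete functions supported on a single element $K$ (respectively on a single interval $\Lambda_i$) and integrate by parts locally. On an interior element $K$ that does not meet $\partial B_\Lambda$, the coupling term vanishes and one gets $f = -\Delta u$ in $L^2(K)$; on elements in $\omega_i$ one picks up the coupling contribution, which by the definition of the lift is exactly $L_h u$. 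This yields the pointwise identity $f + \Delta u - L_h u = 0$ on each $K$ (in the appropriate distributional/$L^2$ sense on element interiors), and analogously $A\hat f + \mathrm{d}_s(A\,\mathrm{d}_s \hat u) - P\hat L_h \hat u = 0$ on each $\Lambda_i$.

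With these consistency identities in hand, the second step is to write
\[
f + \Delta v - L_h v = (f - \pi_h f) + (\pi_h f + \Delta u - L_h u) + \Delta(v - u) - L_h(v-u),
\]
where the middle parenthesis is controlled by combining the exact identity with $\|\pi_h f - f\|_{L^2(K)}$, the $\Delta(v-u)$ term is bounded after noting that for polynomials $v_h$ an inverse inequality gives $\|\Delta v_h\|_{L^2(K)} \lesssim h_K^{-1}\|\nabla v_h\|_{L^2(K)}$ (so one must be slightly careful: the statement has $\|\nabla(u-v)\|_{L^2(K)}$ with exponent $h_K^{-2}$ under a square, meaning after squaring we get $h_K^{-2}\|\nabla(u-v)\|^2$, produced by the inverse estimate applied to $\Delta(v-u)$ against the polynomial part), and the lift difference $L_h(v-u)$ appears directly. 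The coupling residual that survives from the exact identity, after splitting, contributes the $h_K^{-1}\sum_{j\in\mathcal{I}_K}\|\overline u - \hat u\|^2_{L^2_P(\Lambda_j)}$ term: this is obtained by testing the coupling functional on $K$ and using the discrete trace inequality \eqref{eq:trace_disc_lambda} to pass from $L^2(\partial B_i \cap \overline K)$ norms to $h_K^{-1/2}$ times $L^2(K)$ norms, exactly as in the proof of Lemma \ref{lemma:lift_operator}. The $1$D estimate \eqref{eq:local_aposteriori_2_element} is entirely parallel, using $\mathrm{d}_s(A\,\mathrm{d}_s \cdot)$ in place of $\Delta$, the inverse estimate on $\Lambda_i$ giving the $h_\Lambda^{-2}\|\mathrm{d}_s(\hat u - \hat v)\|^2_{L^2_A(\Lambda_i)}$ factor, and the projection error $\|\hat\pi_h \hat f - \hat f\|_{L^2_A(\Lambda_i)}$.

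The step I expect to be the main obstacle is making the local consistency identity $f + \Delta u - L_h u = 0$ on $K$ rigorous, because the exact solution $u$ only lies in $H^{3/2-\eta}(\Omega)$ and the full PDE \eqref{eq:pde_1} holds only in the weak sense tested against $H^1(\Omega)$ functions, not elementwise. The resolution is that one never needs gradient traces of $u$: testing \eqref{eq:weak_form_grouped} with functions compactly supported in the interior of a single element $K$ shows $-\Delta u + L_h u = f$ holds as an $L^2(K)$ (indeed distributional) identity on each open element, which is all that is required since $Z_1$ only integrates against the bulk $L^2$ function $w$. The delicate bookkeeping is tracking how the Dirac-type coupling term $\xi(\overline u - \hat u)\delta_\Gamma$ distributes across the elements of $\omega_i$ and matches the definition of $L_h u$; here the assumption that $\partial K \cap \partial B_\Lambda$ has measure zero and the bound on $\#\mathcal{I}_K$ ensure the sum $\sum_{j\in\mathcal{I}_K}$ is finite and the trace passage is clean. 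Once the identity is established, the remaining estimates are routine applications of the triangle inequality, the inverse and trace inequalities \eqref{eq:trace_disc_lambda}, and the approximation property \eqref{eq:approximation_l2_projection} of the $L^2$ projections.
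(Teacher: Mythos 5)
Your proposal contains a genuine gap, and it sits exactly at the step you yourself flagged as the main obstacle. The claimed local consistency identity $f + \Delta u - L_h u = 0$ on $K$ is false for the elements $K \in \omega_i$ that the lemma is actually about. Testing \eqref{eq:weak_form_grouped} with $\psi \in C_c^\infty(K^\circ)$ gives
\begin{equation*}
\int_K \nabla u \cdot \nabla \psi + \int_\Lambda \xi P (\overline{u} - \hat u)\, \overline{\psi} = \int_K f \psi,
\end{equation*}
and the coupling term here is a distribution supported on the two-dimensional surface $\Gamma \cap K$ (it pairs $\psi$ only through its values on $\Gamma$). It is a surface-Dirac type measure, not an $L^2(K)$ density, so the distributional Laplacian $\Delta u$ restricted to $K^\circ$ has a singular part and does \emph{not} belong to $L^2(K)$. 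Crucially, this singular distribution cannot be identified with the lift $L_h u$: the defining relation \eqref{eq:def_lift_operator} equates $b_\Lambda(\overline{u}-\hat u, \overline{w}_h)$ with $(L_h u, w_h)_K$ only for $w_h$ in the \emph{discrete} space $\dgg$, and compactly supported smooth test functions are not discrete polynomials. For a general $\psi$ one only has $b_\Lambda(\overline{u}-\hat u, \overline{\psi}) = b_\Lambda(\overline{u}-\hat u, \overline{\psi}-\overline{\pi_h \psi}) + (L_h u, \psi)_K$, and the projection-error term is exactly what cannot be dropped. This is the strong-consistency failure the paper emphasizes from the outset; your proposed resolution assumes it away. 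The same defect propagates to your treatment of $\Delta(v-u)$: since $\Delta u \notin L^2(K)$, that term is not even well defined, and in any case an inverse inequality cannot be applied to $v-u$, which is not a polynomial.

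The paper's proof circumvents this by never forming $\Delta u$. It multiplies the \emph{discrete} residual $R = (\pi_h f + \Delta v - L_h v)|_K$ by the element bubble $b_K$, sets $\psi = R\, b_K$ (a polynomial vanishing on $\partial K$), uses $\|R\|^2_{L^2(K)} \lesssim \int_K R \psi$, integrates by parts, and only then invokes the weak form tested with $(\psi,0)$. The inverse estimate is applied to $\psi$ (legitimately, since $\psi$ is a polynomial), giving the $h_K^{-1}\|\nabla(u-v)\|_{L^2(K)}\|\psi\|_{L^2(K)}$ contribution, and the coupling term is handled by inserting $\pi_h \psi$ and bounding the projection error $b_\Lambda(\overline{u}-\hat u, \overline{\psi}-\overline{\pi_h\psi})$ with the trace inequality \eqref{eq:trace_disc_lambda}; that is precisely where the $h_K^{-1}\sum_{j\in\mathcal{I}_K}\|\overline{u}-\hat u\|^2_{L^2_P(\Lambda_j)}$ term originates. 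Your final bound has the right shape, and you correctly identified the role of the lift and the trace inequality, but without the bubble-function mechanism (or an equivalent device that keeps all manipulations at the level of integrals against polynomial test functions) the argument as proposed cannot be made rigorous.
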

\begin{proof}
Let $b_K$ be the bubble function associated to $K$ \cite{verfurth1999review}. 
Define the residuals $R =(\pi_h f + \Delta v  - L_h v) \vert_K $ and   $\psi = R \, b_K $. 
Owing to the properties of the bubble functions, we estimate   
\begin{align} \label{eq:expansion}
\|R\|_{L^2(K)}^2 \lesssim \int_{K} R \psi = \int_{K} (f + \Delta v - L_h v) \psi + \int_{K} (\pi_h f - f) \psi= T_1  + T_2.  \nonumber
\end{align} 
Since $\psi$ vanishes on the boundary of $K$, we integrate by parts and obtain 
\begin{align}
T_1  =   \int_{K } (f  \psi - \nabla v  \cdot \nabla \psi - L_h v \, \psi) .  
\end{align}
Testing  \eqref{eq:weak_form_grouped} with $(\psi, 0)$ and substituting in the above gives 
\begin{align}
T_1  = \int_K \nabla (u - v) \cdot \nabla \psi +  b_{\Lambda}(\overline{u} - \hat u, \overline{\psi}) - \int_K L_h v \, \psi . 
\end{align}
The first term is bounded by Cauchy-Schwarz's inequality and inverse estimates since $\psi$ belongs to a finite dimensional space.  
\begin{equation*}
 T_1 \lesssim h_K^{-1} \|\nabla (u-v)\|_{L^2(K)} \|\psi\|_{L^2(K)}+  b_{\Lambda}(\overline{u} - \hat u, \overline{\psi}) - \int_{K} L_h v \, \psi  .     
\end{equation*} 
For the second term above, we use the definition of the lift operator \eqref{eq:def_lift_operator} and write
\[
b_{\Lambda}(\overline{u} - \hat u, \overline{\psi}) = b_{\Lambda}(\overline{u} - \hat u, \overline{\psi} - \overline{\pi_h \psi}) +  (L_h u,  \pi_h\psi)_K = b_{\Lambda}(\overline{u} - \hat u, \overline{\psi} - \overline{\pi_h \psi}) +  (L_h u,  \psi)_K.  
\]
Here we used the definition of the $L^2$ projections in \eqref{eq:local_l2_projection}
and the fact that $L_h u \in \dgg $. 
Since $\psi$ is locally supported on one element $K$, with Cauchy--Schwarz's inequality,  trace estimate \eqref{eq:trace_disc_lambda}, and stability of the $L^2$ projection,  we obtain the bound  
\begin{multline} \nonumber
    \sum_{j\in\mathcal{I}_K} \|\overline \psi - \overline{\pi_h \psi}\|^2_{L^2_P(\Lambda_j)} \leq  \|\psi - \pi_h \psi\|^2_{L^2(\partial B_{\Lambda} \cap K) } \lesssim h_K^{-1} \|\psi - \pi_h \psi\|^2_{L^2(K)} \lesssim   h_K^{-1} \|\psi\|^2_{L^2(K)} . 
\end{multline}
Thus, with Cauchy-Schwarz's and triangle inequalities, we obtain that 
\begin{equation}
 \label{eq:to_bound_blambda}
b_{\Lambda}(\overline{u} - \hat u, \overline{\psi} - \overline{\pi_h \psi}) \lesssim h_K^{-1/2}  (\sum_{j\in\mathcal{I}_K} \|\overline{u} - \hat{u}\|^2_{L^2_P(\Lambda_j)} )^{1/2} \|\psi\|_{L^2(K)}.
\end{equation}
Thus, we obtain 
\begin{align*}
 T_1 \lesssim & 
\Vert \psi\Vert_{L^2(K)} \left( h_K^{-1} \Vert \nabla (u-v)\Vert_{L^2(K)} + 
\Vert L_h(u-v)\Vert_{L^2(K)} + h_K^{-1/2} (\sum_{j\in\mathcal{I}_K} \Vert \bar{u}-\hat{u}\Vert^2_{L_P^2(\Lambda_j)})^{1/2}\right). 
\end{align*} 
The term $T_2$ is simply handled  by Cauchy-Schwarz's inequality.  Collecting the resulting bounds  in \eqref{eq:expansion},  noting that $\|\psi\|_{L^2(K)} 
 \lesssim \|R\|_{L^2(K)}$, and using the triangle inequality  yields estimate \eqref{eq:local_aposteriori_1}. 
 
 To show \eqref{eq:local_aposteriori_2_element}, let $\hat b_i$ denote the bubble functions associated to $\Lambda_i$,  $\hat R = (A\hat{\pi}_h \hat{f}  + \mathrm{d}_s(A \, \mathrm{d}_s  \hat v) - P \hat{L}_h \hat v)\vert_{\Lambda_i}$, and $\hat \psi = \hat R \hat b_i$. We have  
 \begin{align}
 \|\hat R \|_{L^2(\Lambda_i)}^2  &\lesssim  \int_{\Lambda_i} \hat R \hat{\psi}  \nonumber =  \int_{\Lambda_i} (A \hat f +  \mathrm{d}_s(A \, \mathrm{d}_s \hat v) - P \hat L_h \hat v) \hat \psi \nonumber +\int_{\Lambda_i}A(\hat{\pi}_h \hat f - \hat f) \hat{\psi} = T_3 +T_4.  \nonumber
 \end{align} 
 Testing \eqref{eq:weak_form_grouped} with $(0,\hat \psi)$ and performing the same computation as before, we obtain 
 \begin{align}
    T_3 =  \int_{\Lambda_i} A \,\mathrm{d}_s (\hat u - \hat v ) \, \mathrm{d}_s \hat \psi  - b_{\Lambda}(\overline{u} - \hat u,  \hat \psi - \hat \pi_h \hat \psi) + \int_{\Lambda_i} P \hat L_h (\hat u - \hat v) \hat \psi .  
 \end{align}
With Cauchy--Schwarz's and inverse inequalities, the stability of the $L^2$ projection,  and  the fact that $\hat \psi$ is locally supported in $\Lambda_i$, we obtain 
\begin{equation*}
    T_3 \lesssim   \|\hat \psi\|_{L^2_P(\Lambda_i)}(h_{\Lambda}^{-1}\|\mathrm{d}_s (\hat{u} - \hat{v})\|_{L_A^2(\Lambda_i)} \\  + \|\hat L_h (\hat u - \hat v)  \|_{L^2_P(\Lambda_i)}  + \|\overline{u} - \hat u \|_{L^2_P(\Lambda_i)}). 
\end{equation*}
Bounding $T_4$ with Cauchy--Schwarz's inequality and using that $\|\hat \psi\|_{L^2_P(\Lambda_i)} \lesssim \|\hat R\|_{L^2(\Lambda_i)}$,  estimate \eqref{eq:local_aposteriori_2_element} is obtained.  
\end{proof}
An immediate corollary to the above Lemmas is the following global bound. \begin{corollary}\label{cor:first_aposterior}
Recall that $h_{B} = \max_{K \in \meshb} h_K$. The following bound on $R^1_{\Omega} + R^1_{\Lambda}$ (as defined in \eqref{eq:T1}) holds.  
\begin{align} \label{eq:bound_R1}
(R^1_\Omega + R^1_\Lambda) 
\lesssim &   \|\bm u -\bm v \|^2_{ \DG }  + (h_B + h_{\Lambda}^2)  \|\overline{u} - \hat{u}\|_{L^2_P(\Lambda)}^2  
\\ &+  h^2 \|f - \pi_h f \|^2_{L^2(\Omega)} + h_{\Lambda}^2 \|\hat f -\hat \pi_h \hat f \|_{L^2_A(\Lambda)}. \nonumber
  \end{align}
\end{corollary}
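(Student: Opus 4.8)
The plan is to derive \eqref{eq:bound_R1} by multiplying the two local residual estimates of Lemma \ref{lemma:first_aposteriori} by the appropriate mesh weights and summing, then matching each resulting contribution against a term on the right-hand side of \eqref{eq:bound_R1}. Concretely, I would multiply \eqref{eq:local_aposteriori_1} by $h_K^2$ and sum over $K \in \meshg$ to control $R^1_\Omega$, and multiply \eqref{eq:local_aposteriori_2_element} by $h_\Lambda^2$ and sum over $i$ to control $R^1_\Lambda$. The four types of terms appearing on the right of the local bounds --- the consistency (gradient/derivative) terms, the lift terms, the data-oscillation terms, and the coupling terms $\|\overline u - \hat u\|_{L^2_P(\Lambda)}$ --- are then treated separately.

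The consistency and data-oscillation terms are immediate. After multiplication, $h_K^{-2}\|\nabla(u-v)\|^2_{L^2(K)}$ becomes $\|\nabla(u-v)\|^2_{L^2(K)}$, whose sum is bounded by $\|u-v\|^2_{\dgg} \leq \|\bm u - \bm v\|^2_{\DG}$, while $h_\Lambda^{-2}\|\mathrm{d}_s(\hat u - \hat v)\|^2_{L^2_A(\Lambda_i)}$ becomes a summand of $|\hat u - \hat v|^2_{\dgl}$. The oscillation terms give exactly $\sum_K h_K^2\|\pi_h f - f\|^2_{L^2(K)} \leq h^2\|f - \pi_h f\|^2_{L^2(\Omega)}$ and $\sum_i h_\Lambda^2\|\hat\pi_h\hat f - \hat f\|^2_{L^2_A(\Lambda_i)}$, matching the last two terms of \eqref{eq:bound_R1}. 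For the lift terms I would invoke Lemma \ref{lemma:lift_operator}: since $L_h(u-v)$ is supported on $\meshb$, where $h_K \leq h_B$, one has $\sum_K h_K^2\|L_h(u-v)\|^2_{L^2(K)} \leq h_B \sum_K h_K\|L_h(u-v)\|^2_{L^2(K)} \lesssim h_B(\|u-v\|^2_{\dgg} + \|\hat u - \hat v\|^2_{L^2_P(\Lambda)})$ by \eqref{eq:stability_lift_operator}, and similarly the one-dimensional lift term carries a factor $h_\Lambda^2$; using \eqref{eq:bounduhat} and that the mesh size is bounded, both are absorbed into $\|\bm u - \bm v\|^2_{\DG}$.

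The remaining, and genuinely delicate, contributions are the coupling terms. From $R^1_\Lambda$ the term $\sum_i h_\Lambda^2\|\overline u - \hat u\|^2_{L^2_P(\Lambda_i)}$ sums immediately to $h_\Lambda^2\|\overline u - \hat u\|^2_{L^2_P(\Lambda)}$, since the $\Lambda_i$ partition $\Lambda$. From $R^1_\Omega$, however, one is left with $\sum_{K \in \meshb} h_K\sum_{j \in \mathcal{I}_K}\|\overline u - \hat u\|^2_{L^2_P(\Lambda_j)}$, and the key step is to bound $h_K \leq h_B$, interchange the order of summation --- each interval $\Lambda_j$ now being counted once for every $K \in \omega_j$ --- and invoke the bounded-overlap structure of the neighborhoods $\{\omega_i\}$ to collapse the double sum into $h_B\|\overline u - \hat u\|^2_{L^2_P(\Lambda)}$. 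Adding the two contributions produces the factor $(h_B + h_\Lambda^2)$ in \eqref{eq:bound_R1}. I expect this last accounting to be the main obstacle: one must verify that after extracting $h_B$ the overlaps of the 3D neighborhoods with the 1D partition remain uniformly controlled, so that exactly one power of $h_B$ survives, which is precisely where the standing assumption relating the 3D and 1D refinement levels enters.
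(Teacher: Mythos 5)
Your proposal is correct and is essentially the paper's own proof: weight the local bounds \eqref{eq:local_aposteriori_1} and \eqref{eq:local_aposteriori_2_element} by $h_K^2$ and $h_\Lambda^2$ respectively, sum, absorb the lift terms via Lemma \ref{lemma:lift_operator} together with \eqref{eq:bounduhat}, and treat the coupling contribution to $R^1_\Omega$ by extracting $h_B$ and interchanging the double sum. The only differences are cosmetic: the paper first splits $\meshg$ into $\meshb$ and its complement --- on $K \notin \meshb$ one has $L_h v = 0$ and $\mathcal{I}_K = \emptyset$, so the standard interior residual estimate applies, which is what your ``sum over all $K \in \meshg$'' implicitly requires since Lemma \ref{lemma:first_aposteriori} is stated only for $K \in \omega_i$ --- and the overlap accounting you flag as the main obstacle is resolved in the paper at exactly that step by invoking the standing assumption on the cardinality of $\mathcal{I}_K$ relating the 3D and 1D refinement levels.
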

\begin{proof}
First note that  if $K \notin \meshb$, then $L_h v = 0 $ on $K$,
and $\mathcal{I}_K = \emptyset$.  We can write
\begin{align*}
R_{\Omega}^1 = 
\sum_{K\in\mathcal{T}_B^h} 
h_K^2 \Vert f+\Delta v-L_h v\Vert_{L^2(K)}^2 
 +\sum_{K\in\meshg\setminus \mathcal{T}_B^h} 
h_K^2 \Vert f+\Delta v\Vert_{L^2(K)}^2.
\end{align*}
For the first term in the right-hand side, we use Lemma~\ref{lemma:first_aposteriori} and the assumption that $|\mathcal{I}_K| \lesssim C$ for all $K \in \meshb$  to obtain the bound:
\begin{align*}
R_{\Omega}^1  \lesssim &\Vert u-v\Vert_{\dgg}^2 
+ h_B \Vert \bar{u} - \hat u \Vert^2_{L_P^2(\Lambda)}
+ h^2 \Vert f - \pi_h f \Vert_{L^2(\Omega)}^2
\\ &+\sum_{K\in\mathcal{T}_B^h} h_K^2 \Vert L_h (u-v)\Vert^2_{L^2(K)} 
+\sum_{K\in\meshg\setminus \mathcal{T}_B^h} 
h_K^2 \Vert f+\Delta v\Vert_{L^2(K)}^2.
\end{align*}
If $K \notin \meshb$, then  standard a posteriori estimates \cite[Section 5.5.1]{di2011mathematical} yield  
\[ h_K^2 \|f+\Delta v\|^2_{L^2(K)} \lesssim \|\nabla (u - v)\|_{L^2(K)}^2 + h_{K}^2  \|f - \pi_h f\|_{L^2(K)}^2, \,\, \forall K \in \meshg \, \backslash \, \meshb. \]
With the bound above and Lemma \ref{lemma:lift_operator}, we can conclude that bound \eqref{eq:bound_R1} holds on $R_{\Omega}^1$. The same bound holds on $R_{\Lambda}^1$ which follows immediately from Lemma~\ref{lemma:first_aposteriori} and Lemma \ref{lemma:lift_operator}. 
\end{proof}

We proceed to bound on $R_\Omega^2 + R_{\Lambda}^2$. For any face $F$, let $S_F = K_F^1\cup K_F^2$ where $K_F^1$ and $K_F^2$ are the elements sharing the face $F$.  We also define
\[
\forall 1\leq i\leq N-1, \quad \hat{S}_i = \Lambda_{i-1}\cup\Lambda_i, \quad
\hat{S}_0 = \Lambda_0, \quad \hat{S}_N = \Lambda_{N-1}, \quad \hat{S}_{N+1} = \emptyset.
\]
\begin{lemma}[Bound on local residual over faces]\label{lemma:second_aposteriori}
Fix $1 
\leq i \leq N$. Then, for any $F \in 
\Gamma_i$,   and any $v \in \dgg$, there holds  
\begin{align}
\label{eq:local_aposteriori_2}
\|[\nabla v] \cdot \bm{n}_F\|^2_{L^2(F)}  \lesssim & \, |F|^{-1/2} \ \sum_{K\subset S_F}  \|\nabla (u-v) \|^2_{L^2(K)}     + |F|^{1/2}  \sum_{K\subset S_F} \|f+ \Delta v -  L_h v\|^2_{L^2(K)} \\
& + |F|^{1/2} \|L_h(u-v)\|_{L^2(S_F)}^2  
 +  \|\overline{u} - \hat{u}\|^2_{L_P^2(  \hat{S}_i \cup \hat{S}_{i+1})}. \nonumber
\end{align}
For any $\hat v \in  \dgl$, there holds  
\begin{align}
\label{eq:local_aposteriori_2b}
  [A\,  \mathrm{d}_s \hat{v}]_{s_i}^2    \lesssim  & \, h_{\Lambda}^{-1} \|\mathrm{d}_s (\hat u- \hat v) \|^2_{L_A^2(\hat{S}_i)}   +  h_{\Lambda}  \sum_{\Lambda_\ell\subset \hat{S}_i} \|A \hat f + \mathrm{d}_s(A \, \mathrm{d}_s \hat v) - P L_h \hat v\|^2_{L^2(\Lambda_\ell)} \\
  & +  h_\Lambda \|\hat L_h (\hat u - \hat v) \|^2_{L^2(\hat{S}_i)} 
 +  h_{\Lambda}\|\overline{u} - \hat{u}\|^2_{L_P^2(\hat{S}_i)}. \nonumber
\end{align}
\end{lemma}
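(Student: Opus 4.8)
The proof mirrors the structure of \Cref{lemma:first_aposteriori}, but now the residual lives on a face (resp. a node), so the bubble function must be supported on the face-patch $S_F$ (resp. the node-patch $\hat S_i$) rather than on a single element. The plan is to introduce an edge/face bubble $b_F$ supported in $S_F$, set $\psi = [\nabla v]\cdot\bm n_F\, b_F$ extended suitably into $S_F$, and exploit $\int_F \psi \,[\nabla v]\cdot\bm n_F \gtrsim \|[\nabla v]\cdot\bm n_F\|_{L^2(F)}^2$ together with the inverse/scaling estimates $\|\psi\|_{L^2(S_F)}\lesssim |F|^{1/4}\|[\nabla v]\cdot\bm n_F\|_{L^2(F)}$ and $\|\nabla\psi\|_{L^2(S_F)}\lesssim |F|^{-1/4}\|[\nabla v]\cdot\bm n_F\|_{L^2(F)}$, which follow from standard bubble-function properties \cite{verfurth1999review} and the fact that on a regular mesh $|F|^{1/2}\approx h_K$.

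First I would write $\int_F [\nabla v]\cdot\bm n_F\,\psi$ and integrate by parts element-by-element over the two elements $K\subset S_F$. Since $\psi$ vanishes on $\partial S_F$, the boundary contributions collapse to the single face $F$, producing
\[
\|[\nabla v]\cdot\bm n_F\|_{L^2(F)}^2 \lesssim \sum_{K\subset S_F}\int_K\big(\nabla v\cdot\nabla\psi + \Delta v\,\psi\big) + \int_F [\nabla v]\cdot\bm n_F\,\psi .
\]
Next I would add and subtract $f$ and the lift $L_h v$ inside the volume integral, then test the weak formulation \eqref{eq:weak_form_grouped} with $(\psi,0)$ exactly as in the element estimate. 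This converts $\int_K(\nabla v\cdot\nabla\psi+f\psi)$ into $-\int_K\nabla(u-v)\cdot\nabla\psi$ plus the coupling term $b_\Lambda(\overline u-\hat u,\overline\psi)$, and produces a residual $\int_K(f+\Delta v-L_h v)\psi$ together with lift terms. The coupling term is handled precisely as in \eqref{eq:to_bound_blambda}: insert $\overline{\pi_h\psi}$, use the $L^2$-projection orthogonality and the lift definition \eqref{eq:def_lift_operator}, and bound $\|\overline\psi-\overline{\pi_h\psi}\|_{L^2_P}$ via the discrete trace inequality \eqref{eq:trace_disc_lambda}. The only subtlety is bookkeeping: the patch $S_F$ touches the 1D cross-sections indexed by $\hat S_i\cup\hat S_{i+1}$, which is why that combination appears on the right-hand side. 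Collecting terms, applying Cauchy–Schwarz, cancelling one factor of $\|[\nabla v]\cdot\bm n_F\|_{L^2(F)}$, and rescaling the inverse estimates by the powers of $|F|$ yields \eqref{eq:local_aposteriori_2}.

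The 1D estimate \eqref{eq:local_aposteriori_2b} is strictly analogous, with the node $s_i$ playing the role of the face $F$, the nodal jump $[A\,\mathrm d_s\hat v]_{s_i}$ playing the role of $[\nabla v]\cdot\bm n_F$, and a nodal bubble supported on $\hat S_i=\Lambda_{i-1}\cup\Lambda_i$. Here one tests \eqref{eq:weak_form_grouped} with $(0,\hat\psi)$, integrates by parts on each subinterval, and uses that on a uniform 1D mesh $|F|^{1/2}$ is replaced throughout by $h_\Lambda$. I expect the main obstacle to be the integration-by-parts bookkeeping across the patch: one must verify that after summing over the two elements of $S_F$ the interior faces other than $F$ contribute nothing (since $\psi$ and its relevant derivatives vanish there by the bubble construction) and that the element residual and lift terms attach to the correct patch with the correct power of $|F|$. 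Once the bubble is chosen so that $\psi$ is a fixed polynomial multiple of $b_F$ on each element of $S_F$, the inverse estimates are routine and the scaling falls out. No strong-consistency assumption on $u$ is needed, because $u$ enters only through $\nabla(u-v)$ and the coupling residual $\overline u-\hat u\in L^2_P(\Lambda)$, both of which are well defined.
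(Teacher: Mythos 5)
Your proposal is correct and takes essentially the same route as the paper's proof: a face bubble $\psi = ([\nabla v]\cdot\bm{n}_F)\,b_F$ (resp.\ a nodal hat function $\hat\psi_i = \hat r_i \hat b_i$), testing the weak formulation \eqref{eq:weak_form_grouped} with $(\psi,0)$ (resp.\ $(0,\hat\psi_i)$), insertion of the $L^2$-projection together with the lift operator \eqref{eq:def_lift_operator} to convert the coupling term, the discrete trace inequality \eqref{eq:trace_disc_lambda}, and the scaling $|F|^{1/2}\approx h_K$ to absorb the powers of $|F|$ — exactly the ingredients and bookkeeping (including the patch $\hat S_i\cup\hat S_{i+1}$) the paper uses. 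The only blemish is the redundant face integral in your first display (integration by parts makes it equal to the element sum, so the inequality still holds but the term should not appear); this is cosmetic and does not affect the argument.
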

\begin{proof}
Fix $1\leq i\leq N$ and fix $F$ in $\Gamma_i$. Denote by $b_F$ the face bubble associated to $F$; this means that $b_F$ vanishes on the boundary of $S_F$ and $b_F$ takes the value one at the barycenter of $F$. Fix $v$ in $\dgg$. 
  We set $r = [\nabla v] \cdot \bm {n}_F $, extend $r$ by constant values along $\bm{n}_F$,  and set $\psi = r b_F$. 
  From  \cite[proof of Lemma 5.7 (ii)]{di2011mathematical}, we have 
\begin{equation}
\|\psi\|_{L^2(S_F)} \lesssim |F|^{1/4} \|r\|_{L^2(F)}. \label{eq:prop_bubble_faces}
\end{equation}
With the properties of the bubble function and integration by parts, we have 
\begin{align}
\|r\|^2_{L^2(F)} \lesssim  \int_F r \psi = \int_{F} [\nabla v] \cdot \bm{n}_F \psi = \sum_{K\subset S_F} \int_{K} \Delta v \, \psi + \sum_{K\subset S_F} \int_{K} \nabla v \cdot \nabla \psi.  
\end{align}
Choose the test function $\bm{v} = (\psi, 0)$ in \eqref{eq:weak_form_grouped} 
\begin{equation}
\sum_{K\subset S_F}\int_{K} \nabla u \cdot \nabla \psi  +  b_\Lambda(\overline u -\hat u, \overline{\psi})= \int_{S_F}  f \psi. 
\end{equation}
We introduce the $L^2$ projection and rewrite  the second term above as
\begin{align*}
  b_\Lambda(\overline u -\hat u, \overline{\psi})  & =   b_\Lambda(\overline u -\hat u, \overline{\psi} - \overline{\pi_h \psi} ) + (L_h u , \pi_h \psi)_\Omega   
   = b_\Lambda(\overline u -\hat u, \overline{\psi} - \overline{\pi_h \psi}  ) + \int_{S_F} L_h u  \, \psi.  
\end{align*}  
After some manipulation, we obtain 
\begin{align*}
 \|r\|^2_{L^2(F)}  \lesssim  &
\sum_{K\subset S_F} \int_{K} (f + \Delta v -  L_h v) \psi  + \int_{S_F} L_h (v-u) \, \psi \\
& + \sum_{K\subset S_F} \int_{K} \nabla (v-u) \cdot \nabla \psi  
  - b_\Lambda(\overline u -\hat u,  \overline{\psi} - \overline{\pi_h \psi}  )  = W_1 +\ldots + W_4.   \nonumber
\end{align*}
With \eqref{eq:prop_bubble_faces}, the terms $W_1$ and $W_2$ are bounded as:
\begin{align*}
    W_1 +W_2 \lesssim & |F|^{1/4} \Vert r \Vert_{L^2(F)}
    \left( ( \sum_{K\subset S_F} 
    \|f+ \Delta v -  L_h v\|_{L^2(K)}^2)^{1/2}
    +   \|L_h(u-v)\|_{L^2(S_F)}\right). 
\end{align*}
With inverse estimates and \eqref{eq:prop_bubble_faces} and the observation  that $h_{K_F^\ell}^{-1} |F|^{1/4} \lesssim |F|^{-1/4}$ for $\ell = 1,2$, we bound 
\begin{equation*} 
    W_3 \lesssim  |F|^{-1/4} \|r\|_{L^2(F)} (\sum_{K\subset S_F} \|\nabla (u-v) \|_{L^2(K)}^2)^{1/2}.
\end{equation*} 
Let $K_F^1$ and $K_F^2$ denote the elements that share the face $F$
and let  $\mathcal{J}_F$ denote the set of indices $i_0$ such that $K_F^1$ belongs to $\omega_{i_0}$ or such that $K_F^2$ belongs to $\omega_{i_0}$.
In reality, the set $\mathcal{J}_F$ is either the singleton $\{i\}$  (recall that $F$ belongs to $\Gamma_i$) or the pair $\{i, i+1\}$ or the pair
$\{i-1, i\}$ or the triplet  $\{i-1,i,i+1\}$.  
\begin{align*}
W_4 = (\xi P (\overline u-\hat u),\overline{\psi} & -\overline{\pi_h\psi})_\Lambda
=\sum_{\ell\in\mathcal{J}_F} (\xi  P(\overline u-\hat u),\overline{\psi}-\overline{\pi_h\psi})_{\Lambda_\ell}
\\
&\leq \xi (\sum_{\ell\in\mathcal{J}_F} \Vert \overline u-\hat u\Vert_{L^2_P(\Lambda_\ell)}^2)^{1/2}
 (\sum_{\ell\in\mathcal{J}_F} \Vert \overline{\psi}-\overline{\pi_h\psi} \Vert_{L^2_P(\Lambda_\ell)}^2)^{1/2}
\\
&\leq \xi (\sum_{\ell\in\mathcal{J}_F} \Vert \overline u-\hat u\Vert_{L^2_P(\Lambda_\ell)}^2)^{1/2}
\Vert \psi - \pi_h\psi \Vert_{L^2(S_F\cap \partial B_\Lambda) }.
\end{align*} 
With Cauchy-Schwarz inequality and trace estimate \eqref{eq:trace_disc_lambda}, we obtain 
\begin{align*}
  W_4  &\lesssim \xi (\sum_{\ell\in\mathcal{J}_F} \Vert \overline u-\hat u\Vert_{L^2(\Lambda_\ell)}^2)^{1/2}
(\sum_{K\subset S_F} h_K^{-1} \Vert \psi - \pi_h\psi 
\Vert_{L^2(K)}^2)^{1/2}
\\
& \lesssim h_F^{-1/2}
\Vert \overline u-\hat u\Vert_{L^2(\hat S_i\cup\hat S_{i+1})}
\Vert \psi \Vert_{L^2(S_F)},
\end{align*}
where $h_F = \min(h_{K_F^1}, h_{K_F^2})$. 
Therefore, with \eqref{eq:prop_bubble_faces}, we have
\begin{equation}
 W_4 
\lesssim   h_F^{-1/2} \vert F \vert^{1/4} \Vert r \Vert_{L^2(F)}
\Vert \overline u-\hat u\Vert_{L^2(\hat S_i\cup \hat S_{i+1})}
 \lesssim 
\Vert r \Vert_{L^2(F)}
\Vert \overline u-\hat u\Vert_{L^2(\hat S_i\cup\hat S_{i+1})}.
\end{equation} 
Collecting the above bounds and using appropriate Young's inequalities yield \eqref{eq:local_aposteriori_2}.
%%%
%%%
%%%%
To prove the bound \eqref{eq:local_aposteriori_2b}, we  denote by $\hat{b}_i$ the typical hat function associated to the node $s_i$; this means that $\hat{b}_i$ is piecewise linear, takes the value $1$ at $s_i$ and the value $0$ at all the other nodes $s_\ell$ for $\ell\neq i$. Denote by $\hat r_i = [A \, \mathrm{d}_s \hat{v}]_{s_i}$ and let $\hat \psi_i = \hat r_i \hat b_i$. It easily follows that
\begin{equation}
 \|\hat{\psi}_i\|_{L^2(\hat S_i)} \lesssim h_{\Lambda}^{1/2} |\hat r_i|. \label{eq:prop_bubble_faces2}
\end{equation}
 Using integration by parts, it is easy to check that
\begin{equation}
    \hat r_i^2 =  [A \, \mathrm{d}_s \hat{v}]_{s_i} \hat \psi_i(s_i) = \sum_{\Lambda_\ell\subset \hat S_i} \left(\int_{\Lambda_\ell} \mathrm{d}_s (A \, \mathrm{d}_s \hat v) \,  \hat \psi_i  + \int_{\Lambda_\ell} A \, \mathrm{d}_s \hat v \, \mathrm{d}_s \hat \psi_i\right).
\end{equation} 
This time, we choose for test function $\bm{v} = (0, \hat{\psi}_i)$ in \eqref{eq:weak_form_grouped} to obtain
\begin{equation}
 \sum_{\Lambda_\ell\subset\hat S_i} \int_{\Lambda_{\ell}} A \, \mathrm{d}_s \hat u \,\, \mathrm{d}_s \hat \psi_i -  b_\Lambda(\overline u -\hat u,  \hat \psi_i )= 
  \int_{\hat S_i} A \hat f \hat \psi_i. 
\end{equation}
We rewrite it as
\begin{equation}
 \sum_{\Lambda_\ell\subset\hat S_i} \int_{\Lambda_{\ell}} A \, \mathrm{d}_s \hat u \,\, \mathrm{d}_s \hat \psi_i -  b_\Lambda(\overline u -\hat u,  \hat \psi_i -\hat \pi_h \hat\psi_i) + \int_{\hat S_i} P \hat L_h \hat u \, \hat \psi_i = 
  \int_{\hat S_i} A \hat f \hat \psi_i. 
\end{equation}
After some manipulation, we obtain 
\begin{align}
  \hat r_i^2 \lesssim &
  \sum_{\Lambda_\ell\subset\hat S_i}\int_{\Lambda_\ell} ( A \hat f + \mathrm{d}_s(A \, \mathrm{d}_s  \hat v) - P \hat L_h \hat v) \hat \psi_i   +  \int_{\hat S_i}  P\hat L_h (\hat v- \hat u) \hat \psi_i   \nonumber \\
& +  \sum_{\Lambda_\ell\subset\hat S_i} \int_{\Lambda_\ell}  A \, \mathrm{d}_s (\hat v - \hat u)\, \mathrm{d}_s \hat \psi_i   + b_\Lambda(\overline u -\hat u,   \hat \psi_i - \hat \pi_h \hat  \psi_i )  = W_5 +\ldots + W_8.   \nonumber
\end{align}
We easily bound the terms $W_5, W_6$ and $W_7$ by \eqref{eq:prop_bubble_faces2}
\begin{align*}
    W_5 +W_6 &\lesssim 
     h_{\Lambda}^{1/2}  |\hat r_i| \left((\sum_{\Lambda_\ell\subset\hat S_i} \| A \hat f + \mathrm{d}_s(A \, \mathrm{d}_s  \hat v) - P \hat L_h \hat v\|_{L^2(\Lambda_\ell)}^2)^{1/2}
    + 
 \|\hat L_h (\hat u - \hat v) \|_{L_P^2(\hat{S}_i)}\right),\\
 W_7 &\lesssim   h_{\Lambda}^{-1/2} |\hat r_i| (\sum_{\Lambda_\ell\subset\hat S_i} \|\mathrm{d}_s (\hat u-\hat v) \|_{L_A^2(\Lambda_\ell)}^2)^{1/2}.  
\end{align*}
For the term $W_8$, we have by Cauchy-Schwarz and stability of the $L^2$--projection that 
\begin{multline*}
W_8  
 \lesssim 
\Vert \overline u-\hat u\Vert_{L^2_P(\hat S_i)}
\Vert \hat \psi_i - \hat \pi_h \hat  \psi_i\Vert_{L_P^2(\hat S_i)}
\\ \lesssim \Vert \overline u-\hat u\Vert_{L_P^2(\hat S_i)}
\Vert \hat \psi_i\Vert_{L^2_P(\hat S_i)}
 \lesssim h_\Lambda^{1/2} \Vert \overline u-\hat u\Vert_{L^2_P(\hat S_i)} \vert \hat r_i\vert.
\end{multline*}
Collecting the above bounds yield the desired result.
\end{proof}
The bound on $(R^2_\Omega + R^2_\Lambda)$ easily follows.  
\begin{corollary}\label{cor:second_aposterior}
 The following bound on $R^2_\Omega + R^2_\Lambda$ as defined in \eqref{eq:T2} holds.
\begin{align} \label{eq:R2_global}
(R^2_\Omega + R^2_\Lambda) 
\lesssim &   \|\bm u -\bm v \|^2_{ \DG }  + (h_B + h_{\Lambda}^2)  \|\overline{u} - \hat{u}\|_{L^2_P(\Lambda)}^2  
\\ &+  h^2 \|f - \pi_h f \|^2_{L^2(\Omega)} + h_{\Lambda}^2 \|\hat f -\hat \pi_h \hat f \|_{L^2_A(\Lambda)}. \nonumber
  \end{align}
\begin{proof}
Recalling the definition of \eqref{eq:def_Gamma_i}, we have 
\[
R_\Omega^2  = \sum_{i=1}^N \sum_{F\in\Gamma_i}
\vert F\vert^{1/2} \Vert [\nabla v]\cdot\bm{n}_F\Vert_{L^2(F)}^2
+ \sum_{F\in\Gamma_h\setminus \bigcup_{i=1}^N \Gamma_i}
\vert F\vert^{1/2} \Vert [\nabla v]\cdot\bm{n}_F\Vert_{L^2(F)}^2.
\]
The first part is bounded using Lemma~\ref{lemma:second_aposteriori}. 
\begin{align*}
 \sum_{i=1}^N \sum_{F\in\Gamma_i}
& \vert F\vert^{1/2} \Vert [\nabla v]\cdot\bm{n}_F\Vert_{L^2(F)}^2 \lesssim \|u-v\|_{\dgg}^2 \\& + \sum_{K \in \meshg} h_K^2 (\|f + \Delta v - L_h v\|^2_{L^2(K)} + \|L_h(u-v)\|^2_{L^2(K)}) + h_B^2 \|\overline{u} - \hat u\|^2_{L^2_{P}(\Lambda)} .
\end{align*}
If $F$ does not belong to $\bigcup_{i=1}^N \Gamma_{i}$, then  $L_h v = 0$ on $S_F$ and  standard a posteriori estimates are used. We omit the details for brevity. Following \cite[Lemma 5.27]{di2011mathematical}, we have 
\begin{equation}
|F|^{1/2}\|[\nabla v]\cdot \bm{n}_F\|^2_{L^2(F)} \lesssim \sum_{K\subset S_F} \|\nabla (u - v)\|_{L^2(K)}^2 + h_{S_F}^2 \|f - \pi_h f \|_{L^2(S_F)}^2 ,\quad \forall F \in \Gamma_h \backslash \bigcup_{i=1}^N \Gamma_{i}  \nonumber
\end{equation}
Combining the above estimates with Lemma \ref{lemma:lift_operator} and Corollary \ref{cor:first_aposterior} yields the bound \eqref{eq:R2_global} on $R_\Omega^2$. For 
$R_{\Lambda}^2$, we  have from Lemma \ref{lemma:second_aposteriori} that 
\begin{align*}
R_\Lambda^2 = \sum_{i=0}^N h_\Lambda [ A \, \mathrm{d}_s \hat v]_{s_i}^2 \lesssim
\sum_{i=0}^N
 \|\mathrm{d}_s (\hat u- \hat v) \|^2_{L_A^2(\hat{S}_i)}  
  +  \sum_{i=0}^N
  h_{\Lambda}^2  \sum_{\Lambda_\ell\subset \hat{S}_i} \|A \hat f + \mathrm{d}_s(A \, \mathrm{d}_s \hat v) - P L_h \hat v\|^2_{L^2(\Lambda_\ell)} \\
  +  h_\Lambda^2    \|\hat L_h (\hat u - \hat v) \|^2_{L_P^2(\Lambda)} 
 +   h_\Lambda^2   \|\overline{u} - \hat{u}\|^2_{L_P^2( \Lambda)}. 
\end{align*}
Applying Corollary~\ref{cor:first_aposterior} and Lemma \ref{lemma:lift_operator} yields the bound on $R_{\Lambda}^2$. 
  \end{proof}
\end{corollary}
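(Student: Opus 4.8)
The plan is to reduce the face and jump residuals $R^2_\Omega$ and $R^2_\Lambda$ to the element residuals $R^1_\Omega,R^1_\Lambda$ already controlled in Corollary~\ref{cor:first_aposterior}, together with lift terms controlled by Lemma~\ref{lemma:lift_operator}. First I would split the 3D face sum according to proximity to the inclusion,
\[
R^2_\Omega = \sum_{i=1}^N\sum_{F\in\Gamma_i} |F|^{1/2}\|[\nabla v]\cdot\bm{n}_F\|_{L^2(F)}^2 + \sum_{F\in\Gamma_h\setminus\bigcup_i\Gamma_i} |F|^{1/2}\|[\nabla v]\cdot\bm{n}_F\|_{L^2(F)}^2,
\]
and treat the two groups separately. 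On faces $F\in\Gamma_i$ I would insert the local bound \eqref{eq:local_aposteriori_2}; after multiplying by $|F|^{1/2}$ the term $|F|^{-1/2}\|\nabla(u-v)\|^2$ becomes part of $\|\bm u-\bm v\|_{\DG}^2$, while the factor $|F|\approx h_K^2$ in front of $\|f+\Delta v-L_h v\|^2$ and $\|L_h(u-v)\|^2$ reproduces exactly the summands of $R^1_\Omega$ and of the lift estimate.

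Next I would sum over $i$. The key point is that the finite-overlap hypotheses---the uniformly bounded cardinality of $\mathcal{I}_K$ and the fact that each $\hat S_i$ meets only $\hat S_{i-1}$ and $\hat S_{i+1}$---guarantee that every element and every subinterval is counted a bounded number of times, so no constant degenerates as $h\to 0$. Collecting the coupling contributions $|F|^{1/2}\|\overline u-\hat u\|^2_{L^2_P(\hat S_i\cup\hat S_{i+1})}$ and using $|F|^{1/2}\lesssim h_B$ yields a term $\lesssim h_B\|\overline u-\hat u\|^2_{L^2_P(\Lambda)}$. For the far faces $F\notin\bigcup_i\Gamma_i$ the lift vanishes, $L_h v|_{S_F}=0$, so the standard residual estimate of \cite[Lemma 5.27]{di2011mathematical} applies and produces only $\|\nabla(u-v)\|^2$ and $h_{S_F}^2\|f-\pi_h f\|^2$ contributions. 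At this stage every remaining element-residual summand is precisely an $R^1_\Omega$ summand, so I would invoke Corollary~\ref{cor:first_aposterior} and Lemma~\ref{lemma:lift_operator} to arrive at \eqref{eq:R2_global} for $R^2_\Omega$.

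The 1D part $R^2_\Lambda=\sum_{i=0}^N h_\Lambda[A\,\mathrm{d}_s\hat v]_{s_i}^2$ is handled analogously but more directly, since there is no ``far node'' distinction to make. I would apply the nodewise bound \eqref{eq:local_aposteriori_2b} at every $s_i$; the crucial cancellation is that the leading factor $h_\Lambda$ multiplies the $h_\Lambda^{-1}\|\mathrm{d}_s(\hat u-\hat v)\|^2_{L^2_A}$ term to give exactly a $|\hat v|^2_{\dgl}$-type summand, while $h_\Lambda\cdot h_\Lambda\,\|A\hat f+\mathrm{d}_s(A\,\mathrm{d}_s\hat v)-P\hat{L}_h\hat v\|^2$ reproduces the summands of $R^1_\Lambda$. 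Summing over the nodes with the bounded overlap of the $\hat S_i$, and then applying Corollary~\ref{cor:first_aposterior} to the $R^1_\Lambda$ part and Lemma~\ref{lemma:lift_operator} to the lift part, yields the claimed bound for $R^2_\Lambda$.

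The step I expect to be most delicate is the bookkeeping of the powers of $h_B$ and $h_\Lambda$ and of the overlap multiplicities when summing the local face bounds: one must verify that the coupling term $\|\overline u-\hat u\|^2_{L^2_P(\Lambda)}$ accumulates with a factor no worse than $h_B+h_\Lambda^2$ (using $h_B^2\lesssim h_B$ for $h\leq h_0$), and that the element-residual summands line up one-to-one with those of Corollary~\ref{cor:first_aposterior} rather than picking up spurious overlap factors. Everything else is a routine combination of the discrete trace estimate \eqref{eq:trace_disc_lambda}, inverse estimates, and the stability of the $L^2$ projection.
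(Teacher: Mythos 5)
Your proposal is correct and follows essentially the same route as the paper's proof: the same near/far splitting of the face sum, the same insertion of the local bounds \eqref{eq:local_aposteriori_2} and \eqref{eq:local_aposteriori_2b}, the same standard residual estimate for faces away from $B_{\Lambda}$, and the same final combination with Corollary~\ref{cor:first_aposterior} and Lemma~\ref{lemma:lift_operator}. Your bookkeeping of the coupling term (accumulating with factor $|F|^{1/2}\lesssim h_B$) is in fact slightly sharper than the paper's intermediate display, which records $h_B^2$; both are compatible with the stated bound \eqref{eq:R2_global} since $h_B^2\lesssim h_B$.
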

\section{Time dependent 3D-1D model }\label{sec:time_dependent}
We now consider the following time dependent model. For further details on the derivation, well--posedness, and regularity properties of the system, we refer to \cite{masri2023modelling}. The weak formulation of the time--dependent problem reads as follows.  Find $ \bm u = (u,\hat u)  \in \bm V = L^2(0,T;H^1_0(\Omega)) \times L^2(0,T; H^1_A(\Lambda))$ with $(\partial_t u, \partial_t \hat u)  \in L^2(0,T;L^2(\Omega)) \times L^2(0,T; L^2_A(\Lambda))$ such that 
\begin{align}
(\partial_t u , v) +  (\partial_t (A\hat u), \hat v)_{\Lambda} + \mathcal{A}(\bm u , \bm  v) & = (f,v) + (A \, \hat f, \hat v)_{\Lambda} , \quad \forall \bm v \in   \bm V.  \\ 
\bm u(0)  & = (u^0, \hat u^0) \in L^2(\Omega) \times L^2_A(\Lambda). 
\end{align}
Here, we recall that $\mathcal{A}$ is given in \eqref{eq:weak_form_grouped} and assume that $f \in L^2(0,T;L^2(\Omega))$ and $ \hat f \in L^2(0,T; L^2_A(\Lambda))$ are given. We retain the assumptions on $A$ and $P$ from the previous sections, and we assume that they are independent of time.  Consider a uniform partition of the time interval $[0,T]$ into $N_T$ sub-intervals with time step size $\tau$. We use the notation $g^n(\cdot) = g(t^n, \cdot) = g(n \tau, \cdot)$ for any function $g$.   Let $(u_h^0, \hat u_h^0) \in \dgg \times \dgl$ be the $L^2$ projection of $(u^0, \hat u^0)$. 
\[
u_h^0 = \pi_h u^0, \quad \hat u_h^0 = \hat \pi_h \hat u^0.
\]
A backward Euler dG approximation then reads as follows. Find $ \bm u_h = (u_h^n ,\hat u_h^n)_{1\leq n \leq N_T} \in  \dgg \times \dgl$ such that 
\begin{multline}
\frac{1}{\tau} (u_h^{n} - u_h^{n-1}, v_h) + \frac{1}{\tau} ( A (\hat u_h^n - \hat u_h^{n-1}), \hat v_h)_{\Lambda} + \mathcal{A}_h(\bm u_h^n, \bm v_h) \\  = (f^n,v_h) + (A \hat f^n, \hat v_h)_{\Lambda} , \quad \forall \bm v_h \in   \dgg \times \dgl. 
\end{multline}
The form $\mathcal{A}_h$ is given in \eqref{eq:dg_form_combined}. To analyse the above scheme, we define the following elliptic projection: $\Pi_h(t): H^1 
 (0, T; H^1_0(\Omega)) \times  H^1(0,T;H^1_A(\Lambda)) \rightarrow H^1(0,T; \dgg ) \times H^1(0,T; \dgl)  $ such that for a given $\bm g(t) = (g(t), \hat g(t))$
\begin{equation*}
    \mathcal{A}_h ( \Pi_h \bm g (t)  , \bm v_h) =  
    (g(t), v_h)_{\Omega} + (A \, \hat{g}(t), \hat v_h)_{\Lambda}. 
   % (f(t)  - \partial_t u (t) , v_h) 
%+ (A (\hat f(t) - \partial_t \hat u(t) ) , \hat v_h)_{\Lambda},  
\quad \forall \bm{v}_h \in \dgg \times \dgl.
\end{equation*} 
From the analysis of the previous section,  for any $t>0$, $\Pi_h \bm g(t)$ is well defined. Since $\mathcal{A}_h$ is linear and coercive and $\Pi_h$ is continuous, $\partial_t (\Pi_h \bm g(t)) = \Pi_h \partial_t \bm g(t).$  Now, for $\bm u(t) = (u(t), \hat u(t) )$ and $\bm f(t) = (f(t) , \hat f(t))$,  we define the interpolant $\bm \eta_h(t) = (\eta_h (t), \hat \eta_h(t)) \in \dgg \times \dgl$ such that 
$$\bm \eta_h(t)  = \Pi_h (( f(t) - \partial_t u(t) ,\, A \hat f(t) - A\partial_t \hat u(t))).$$
Therefore, we have that 
\begin{equation*}
    \mathcal{A}_h (  \bm \eta_h(t)  , \bm v_h)  =  (f(t)  - \partial_t u (t) , v_h) 
+ (A \, \hat f(t) - A \, \partial_t \hat u(t)  , \hat v_h)_{\Lambda}, \quad \forall \bm v_h \in \dgg \times \dgl.
\end{equation*}
Since $$\mathcal{A}(\bm u(t), \bm v) = (f(t)  - \partial_t u (t) , v)  
+ (A \hat f(t) - A \partial_t \hat u(t)  , \hat v)_{\Lambda}, \quad \forall \bm v \in H^1_0(\Omega) \times H^1_A(\Lambda),$$ we apply the error analysis of the previous section to obtain that for any $\eta>0$  
\begin{multline}  
\|\bm \eta_h(t) - \bm u(t)\|_{\DG} \lesssim h^{1/2 -\eta} (\|u(t)\|_{H^{3/2-\eta}(\Omega)} + \|\hat u(t) \|_{H^2_A(\Lambda)}) \\ + 
h( \|f(t) - \partial_t u(t) \|_{L^2(\Omega)} + \|\hat f (t) - \partial_t u (t) \|_{L^2_A(\Lambda)}) .  \label{eq:error_projection}
\end{multline}
Here, for simplicity, we let $h_{\Lambda} \approx h $. It is also easy to see that
\begin{align*}
    \partial_t \bm{\eta}_h(t) &= \partial_t  \Pi_h (( f(t)-\partial_t u(t),\, A \, \hat f(t) - A\,\partial_t \hat u(t))) \\ & = \Pi_h (( \partial_t f(t)- \partial_{tt} u(t),\, A \, \partial_t \hat f(t) - A\, \partial_{tt} \hat u(t)).  
\end{align*}
Therefore, 
\[ \mathcal{A}_h(\partial_t \bm \eta_h(t) , \bm v_h) =
(\partial_t f(t)  - \partial_{tt} u (t) , v_h)  
+ (A\, \partial_t \hat f(t) -A \, \partial_{tt} \hat u(t)  , \hat v_h)_{\Lambda}, \quad \forall \bm v_h \in \dgg \times \dgl.
\]
Observing that $$\mathcal{A}(\partial_t \bm u(t), \bm v) = (\partial_t f(t)  - \partial_{tt} u (t) , v)  
+ (A\, \partial_t \hat f(t) - A \, \partial_{tt} \hat u(t) , \hat v)_{\Lambda}, \quad \forall \bm v \in H^1_0(\Omega) \times H^1_A(\Lambda),$$ we apply the previous analysis to obtain a bound on $\|\partial_t \bm \eta_h (t) - \partial_t  \bm u(t) \|_{\DG}$ that is a similar to \eqref{eq:error_projection}: 
\begin{multline}  
\|\partial_t  \bm \eta_h(t) - \partial_t \bm u(t)\|_{\DG} \lesssim h^{1/2 -\eta} (\|\partial_t u(t)\|_{H^{3/2-\eta}(\Omega)} + \|\partial_t \hat u(t) \|_{H_A^2(\Lambda)}) \\ + 
h( \|\partial_t f(t) - \partial_{tt} u(t) \|_{L^2(\Omega)} + \|\partial_t \hat f (t) - \partial_{tt}  u (t) \|_{L^2_A(\Lambda)}) .  \label{eq:error_projection_derivative}
\end{multline}
This interpolant allows us to prove the following result. 
\begin{theorem}\label{thm:time_dep_estimate}
For any $1 \leq m \leq N_T$, there holds 
\begin{equation}
\|u_h^m- u^m\|^2 + \|\hat u_h^m - \hat u^m \|_{L^2_{A }(\Lambda)}^2 + \frac{C_{\mathrm{coerc}}}{4}  \tau \sum_{n=1}^{m} \|\bm u_h^n - \bm u^n \|^2_{\DG} \lesssim \tau^2 + h^{1-2\eta} .
\end{equation} 
The above estimate holds under the assumption that $(u,\hat u) \in H^1(0,T; H^{3/2-\eta}(\Omega)) \times H^1(0,T;H^{2}(\Lambda)), \\ (\partial_{tt}u ,\partial_{tt} (A\hat u)) \in L^2(0,T;L^2(\Omega)) \times L^2(0,T;L^2(\Lambda))$, and $(f,\hat f) \in  H^1(0,T;L^2(\Omega)) \times H^1(0,T;L^2(\Lambda))$.
\end{theorem}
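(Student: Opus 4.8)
The plan is to introduce the elliptic-projection interpolant $\bm\eta_h$ already constructed before the statement and to run a discrete energy argument on the split error. I write
\[
\bm u_h^n-\bm u^n=\bm\theta^n+\bm\rho^n,\qquad
\bm\theta^n=\bm u_h^n-\bm\eta_h(t^n),\quad
\bm\rho^n=\bm\eta_h(t^n)-\bm u(t^n).
\]
The projection part is controlled directly: the pointwise bounds \eqref{eq:error_projection} and \eqref{eq:error_projection_derivative} give $\|\bm\rho^n\|_{\DG}\lesssim h^{1/2-\eta}$ at each $t^n$ and, after squaring and summing in time, $\tau\sum_{n}\|\bm\rho^n\|_{\DG}^2\lesssim h^{1-2\eta}$; the corresponding $L^2$ and $L^2_A(\Lambda)$ pieces follow from Poincar\'e \eqref{eq:Poincare} and from \eqref{eq:bounduhat}. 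So the work reduces to estimating $\bm\theta^n$.

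First I would derive the error equation for $\bm\theta^n$. Subtracting the defining identity of $\bm\eta_h$ at $t^n$ from the discrete scheme and splitting the discrete time difference via $\bm u_h^n=\bm\theta^n+\bm\eta_h(t^n)$, the data and stiffness terms cancel and one is left with
\begin{multline*}
\tfrac1\tau(\theta^n-\theta^{n-1},v_h)+\tfrac1\tau(A(\hat\theta^n-\hat\theta^{n-1}),\hat v_h)_\Lambda+\mathcal{A}_h(\bm\theta^n,\bm v_h)\\
=(G^n,v_h)+(A\,H^n,\hat v_h)_\Lambda,
\end{multline*}
where $G^n=\big(\partial_t u^n-\tfrac{u^n-u^{n-1}}{\tau}\big)-\tfrac{\rho^n-\rho^{n-1}}{\tau}$ and $H^n$ is its one-dimensional analogue (the factor $A$ being time-independent). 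The essential point, and the reason the elliptic projection is introduced, is that both the scheme and the projection are tested \emph{only} against discrete functions, so the rough form $\mathcal{A}_h$ never acts on the low-regularity solution $\bm u$; the entire consistency error is isolated in $G^n,H^n$, which are ordinary $L^2$ pairings.

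Next I would take $\bm v_h=\bm\theta^n$. Using the elementary inequality $(a^n-a^{n-1},a^n)\ge\tfrac12(\|a^n\|^2-\|a^{n-1}\|^2)$ on the time-difference terms, coercivity (Lemma~\ref{lemma:coercivity}) on $\mathcal{A}_h(\bm\theta^n,\bm\theta^n)$, and on the right Cauchy--Schwarz together with \eqref{eq:Poincare} and \eqref{eq:bounduhat} to bound $\|\theta^n\|$ and $\|\hat\theta^n\|_{L^2_A(\Lambda)}$ by $\|\bm\theta^n\|_{\DG}$, a Young inequality absorbs a small multiple of $\|\bm\theta^n\|_{\DG}^2$ into the coercivity term. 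Multiplying by $\tau$, summing over $n=1,\dots,m$, and telescoping gives
\begin{multline*}
\|\theta^m\|^2+\|\hat\theta^m\|_{L^2_A(\Lambda)}^2+\tfrac{C_{\mathrm{coerc}}}{4}\,\tau\sum_{n=1}^m\|\bm\theta^n\|_{\DG}^2\\
\lesssim\|\theta^0\|^2+\|\hat\theta^0\|_{L^2_A(\Lambda)}^2+\tau\sum_{n=1}^m\big(\|G^n\|^2+\|H^n\|_{L^2_A(\Lambda)}^2\big).
\end{multline*}
The consistency sum splits into the backward-Euler truncation, $\tau\sum_n\|\partial_t u^n-\tfrac{u^n-u^{n-1}}{\tau}\|^2\lesssim\tau^2\|\partial_{tt}u\|_{L^2(0,T;L^2)}^2$, and the projection-derivative part, $\tau\sum_n\|\tfrac{\rho^n-\rho^{n-1}}{\tau}\|^2\le\int_0^T\|\partial_t\rho\|^2\lesssim h^{1-2\eta}$ by \eqref{eq:error_projection_derivative}; the $\Lambda$-terms are identical since $A$ is bounded above and below and time-independent. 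The initial term is handled through $\bm\theta^0=\pi_h\bm u^0-\bm\eta_h(0)=(\pi_h u^0-u^0)-\rho(0)$, the projection contribution being of higher order and $\|\rho(0)\|\lesssim h^{1/2-\eta}$. Combining the $\bm\theta$ and $\bm\rho$ contributions by the triangle inequality yields the claimed $\tau^2+h^{1-2\eta}$.

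I expect the main obstacle to be conceptual rather than computational: the lack of strong consistency of $\mathcal{A}_h$ on $\bm u\in H^{3/2-\eta}(\Omega)\times H^2(\Lambda)$ forbids a direct Galerkin-orthogonality error equation, and it is precisely the elliptic projection that circumvents this. The price is that the whole argument rests on the steady-state theory of Section~\ref{sec:error}, now applied to the data $(f-\partial_t u,\,A\hat f-A\partial_t\hat u)$ and to its time derivative, in order to secure \eqref{eq:error_projection}--\eqref{eq:error_projection_derivative}. A secondary technical point is closing the energy estimate on the one-dimensional component, where $\|\hat\theta^n\|_{L^2_A(\Lambda)}$ must be dominated by $\|\bm\theta^n\|_{\DG}$ via \eqref{eq:bounduhat} rather than by a direct Poincar\'e inequality on $\Lambda$.
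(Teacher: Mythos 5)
Your proposal is correct and follows essentially the same route as the paper: your $\bm\theta^n$ is exactly the paper's error $\bm e_h^n=\bm u_h^n-\bm\eta_h^n$, and the paper likewise derives the error equation against the elliptic projection, tests with $\bm e_h^n$, uses coercivity plus Young's inequality, splits the consistency term into the backward-Euler truncation (via Taylor) and the projection time-derivative (via \eqref{eq:error_projection_derivative}), handles $\bm e_h^0$ through the $L^2$-projection initial data, and concludes with the triangle inequality and \eqref{eq:error_projection}. The only difference is cosmetic: you split the right-hand side into $G^n,H^n$ before the energy estimate, whereas the paper performs this split inside the bounds for its terms $T_1,T_2$.
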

\begin{proof}
We  derive the error equation for $\bm e_h^n = (e^n_h, \hat e^n_h) = \bm u_h^n - \bm \eta_h^n.$ For all $\bm v_h \in \dgg \times \dgl$, 
\begin{multline}
\frac{1}{\tau}(e_h^n -e_h^{n-1}, v_h) + \frac{1}{\tau} (A( \hat e_h^n - \hat e_h^{n-1}), \hat v_h)_{\Lambda} +  \mathcal{A}_h  (\bm e^n_h , \bm v_h) \\  = \frac{1}{\tau } (\tau (\partial_t u)^n -(\eta_h^n - \eta_h^{n-1}), v_h ) + \frac{1}{\tau} (\tau A( \partial_t  \hat u)^n - A ( \hat \eta_h^n - \hat \eta_h^{n-1} ), \hat v_h)_{\Lambda}. \label{eq:error_equation_time}
\end{multline}
The proof is based on energy arguments. We  test \eqref{eq:error_equation_time}  with $\bm v_h = \bm e_h^n$ and multiply by $\tau$. 
With the coercivity property \eqref{eq:coercivity}, we obtain 
\begin{align} \label{eq:error_eq_after_testing}
& \frac12 (\|e_h^{n}\|^2 -\|e_h^{n-1}\|^2) + \frac12 (\|\hat e_h^{n}\|_{L^2_{A}(\Lambda)}^2 -\|\hat e_h^{n-1}\|_{L^2_{A}(\Lambda)}^2)  + \frac{C_{\mathrm{coerc}}}{2} \tau \|\bm e_h ^n\|_{\DG}^2 \\ \nonumber  & \lesssim   (\tau (\partial_t u)^n -(\eta_h^n - \eta_h^{n-1}), e_h^n ) \nonumber   + (A( \tau  (\partial_t \hat u)^n - (\hat \eta_h^n - \hat \eta_h^{n-1})) , \hat e_h^{n})_{\Lambda} =T_1 +T_2. 
\end{align} It is standard to show (with Cauchy-Schwarz's inequality, Taylor's theorem, and Poincar\'e's inequality \eqref{eq:Poincare}) that 
%\Rd how do we control the error $\partial_t (u-\Pi_h u)$?\Bk
\begin{align} \nonumber
T_1  \lesssim (  \tau^{3/2} \|\partial_{tt} u\|_{L^2(t^{n-1}, t^{n}; L^2(\Omega))} + \tau^{1/2} \|\partial_t (u -  \eta_h)\|_{L^2(t^{n-1}, t^{n}; L^2(\Omega))})  \|e_h^n\|_{\dgg} .
\end{align}
With Young's inequality, we then obtain 
\begin{align}
T_1  \leq C    \tau^{2} \|\partial_{tt} u\|_{L^2(t^{n-1}, t^{n}; L^2(\Omega))}^2 + C \|\partial_t (u -  \eta_h )\|_{L^2(t^{n-1}, t^{n}; L^2(\Omega))}^2 +  \tau  \frac{C_{\mathrm{coerc}}}{8}  \|\bm e_h^n\|^2_{\DG}.  \nonumber
\end{align}
Similarly, we bound $T_2$ with 
\begin{align}
T_2 \leq C    \tau^{2} \|A \partial_{tt} 
\hat u\|_{L^2(t^{n-1}, t^{n}; L^2(\Lambda))}^2 + C \|A \partial_t  (\hat u -\hat \eta_h)  \|_{L^2(t^{n-1}, t^{n}; L^2(\Omega))}^2 +  \tau  \frac{C_{\mathrm{coerc}}}{8}  \|\bm e_h^n\|^2_{\DG}.  \nonumber
\end{align}
We use the above bounds in \eqref{eq:error_eq_after_testing}, and we sum the resulting bound over $n$. We obtain that 
\begin{multline} \nonumber
\|e_h^m\|^2 + \|\hat e_h^m \|_{L^2_{A}(\Lambda)}^2 + \frac{C_{\mathrm{coerc}}}{4}  \tau \sum_{n=1}^{m} \|\bm e_h^n\|^2_{\DG} \lesssim \tau^2 ( \|\partial_{tt} u\|_{L^2(0, T;  L^2(\Omega))}^2  + \|A \partial_{tt}   \hat u 
 \|_{0,T; L^2(\Omega))}^2 ) \\ + \|\partial_t (u - \eta_h)\|_{L^2(0, T; L^2(\Omega))}^2   
+ \| A \partial_t  ( \hat u -\hat \eta_h) \|_{L^2(0,T;L^2(\Omega))}^2 + \|e_h^0\|^2 + \|\hat e^0_h\|^2_{L^2_{A}(\Lambda)}. 
\end{multline} 
Then, the result follows by using the error  estimates \eqref{eq:error_projection} and \eqref{eq:error_projection_derivative}, approximation properties of the $L^2$ projections \eqref{eq:approximation_l2_projection}, and the triangle inequality. 
\end{proof}  
\begin{remark}
In the case of graded meshes, i.e. under the same mesh assumptions as Corollary~\ref{cor:second_err_estimate}, almost optimal spatial convergence rates in the dG norm hold. For example, for $k_1 = 1$, we have that for any $1 \leq m \leq N_T$,
\begin{equation}
\|u_h^m- u^m\|^2 + \|\hat u_h^m - \hat u^m \|_{L^2_{A }(\Lambda)}^2 + \frac{C_{\mathrm{coerc}}}{4}  \tau \sum_{n=1}^{m} \|\bm u_h^n - \bm u^n \|^2_{\DG} \lesssim \tau^2 + h^{2(1-2\eta)} .
\end{equation} 
The above estimate holds under the additional assumption that  $u \in H^1(0,T;H^{2}(\Omega \backslash \overline B_{\Lambda}) \cap H^{2}(B_{\Lambda}))$. The proof follows from the same argument as before where similar estimates to \eqref{eq:improved_graded} are used for the dG norm of $\bm \eta_h - \bm u$ and of $\partial_t (\bm \eta_h - \bm u)$. For $k_1 > 1$, one can also derive almost optimal rates under additional regularity requirements on the solution. We omit the details for brevity.  
\end{remark}
\section{Extension to 1D networks embedded in a 3D domain}\label{sec:network}
We extend the above numerical method and model to a 1D network in a 3D domain.  We adopt the notation of \cite{egger2023hybrid} where a hybridized dG method is used for convection diffusion problems in a network. Here, we only introduce Lagrange multipliers on the bifurcation nodes, and we couple the network model to the 3D equations. We do not analyze this dG method for the 3D-1D network model beyond showing well--posedness and local mass conservation at bifurcation points. The error analysis will be the object of future work.

A network is represented by a finite, directed, and connected oriented graph $\mathcal{G}(\mathcal{V}, \mathcal{E})$ where $\mathcal{V}$ is the  set of vertices and $\mathcal{E}$ is the set of edges. 
We let $
\mathcal{E}(\mathsf{v})$ denote the set of edges sharing a vertex $\mathsf{v}$.  The boundary of the graph is then defined by $\mathcal{V}_{\partial} = \{\mathsf{v} \in \mathcal{V}, \,\, \mathrm{card}(\mathcal{E}(\mathsf{v})) = 1\}$. For a given edge $\mathsf e = (\mathsf{v}_\mathrm{in}^{\mathsf e}, \mathsf{v}_{\mathrm{out}}^{\mathsf e})$, we define the function $n_\mathsf e: \mathcal{V} \rightarrow \{-1,0,1\}$ with $$n_\mathsf e(\mathsf{v}_{\mathrm{in}}^\mathsf e) = 1 , \,\, n _\mathsf e(\mathsf{v}_{\mathrm{out}}^ \mathsf e) = -1, \,\, \mathrm{and} \,\,\, n_\mathsf e(\mathsf{v}) = 0, \quad \forall \mathsf{v} \in 
\mathcal{V} \,  \backslash \,  \{ \mathsf{v}_{\mathrm{in}}^\mathsf e,\mathsf{v}_{\mathrm{out}}^\mathsf e \}.  $$
The collection of bifurcation points is denoted by $\mathcal{B} = \{ \mathsf{v} \in \mathcal{V}, \,\,  \mathrm{card}(\mathcal{E}(\mathsf{v})) \geq  3\}$. For each $\mathsf e \in \mathcal{E}$, we define a surrounding cylinder $B_\mathsf e$ of cross--section $
\Theta_\mathsf e$ with area $A_\mathsf e$ and perimeter $P_\mathsf e$.   The $L_P^2$ space over the graph is defined by 
\begin{equation}
L_P^2(\mathcal{G}) = \{u: \,\, u_\mathsf e =  u|_{\mathsf e} \in L^2_{P_\mathsf e}(\mathsf e),  \,\,\, \forall \mathsf e \in \mathcal{E}\}.
\end{equation}

This 1D-network is embedded in a 3D domain $\Omega$. The surrounding cylinders $B_\mathsf e$ are all strictly included in $\Omega$. In $\Omega$, we solve for $u$ satisfying (in the distributional sense) 
\begin{equation} \label{eq:3D_equation_network}
 - \Delta u + \xi (\overline{u} - \hat{ u} ) \delta_{\mathcal G} = f, \quad \mathrm{in} \,\, \Omega, \quad u = 0 \,\, \mathrm{on } \,\, \partial \Omega,
\end{equation}
 and for each $\mathsf e \in \mathcal{E}$, we solve for a 1D solution $\hat u_\mathsf e$ satisfying 
\begin{equation} \label{eq:network_1d_eq}
-\mathrm d_s (A_\mathsf e \, \mathrm{d}_s \hat u_\mathsf e ) + P_\mathsf e(\hat u_\mathsf e - \overline{u}_\mathsf e) = \hat f_\mathsf e \,\, \mathrm{in} \,\,\mathsf e.
\end{equation}
The coefficient $\xi$ is a piecewise positive constant on each edge of the graph. 
The function $\overline{u}$ is defined by
$$ \overline{u}|_\mathsf e  = \overline{u}_\mathsf e = \frac{1}{P_\mathsf e} \int_{\partial \Theta_\mathsf e } u , \quad \forall \mathsf e \in \mathcal{E}.  $$ 
 The functional $ \xi (\overline{u} - \hat{u}) \delta_\mathcal{G}$ is defined  by 
\begin{equation*}
  \xi (\overline{u} - \hat{u})  \delta_\mathcal{G} (v) %= \sum_{e \in \mathcal{E}}  b_e(\overline{u}_e - \hat{u}_e, \overline{v}_e) 
  = \sum_{\mathsf e \in \mathcal{E}} \int_\mathsf e \xi_\mathsf e  \, P_\mathsf e (\overline{u}_\mathsf e - \hat{u}_\mathsf e) \overline{v}_\mathsf e,  \quad \forall v \in H^1(\Omega). 
\end{equation*}
We supplement the above system with the following boundary conditions which impose conservation of fluxes and continuity at bifurcation points. On the boundary, we impose homogeneous Neumann conditions.
\begin{align} \label{eq:b_c_network1}
    \sum_{\mathsf e \in \mathcal{E}(\mathsf{v})} A_\mathsf e \, \mathrm d_s \hat{u}_\mathsf e (\mathsf{v}) n_\mathsf e(\mathsf{v}) & = 0, 
    \,\,\, \mathrm{and} \,\,\, \hat{u}_\mathsf e(\mathsf{v}) = \hat{u}_{\mathsf e'}(\mathsf{v}), 
     && \forall \, \mathsf{v} \in \mathcal{B}, \,\, \forall \, \mathsf e, \mathsf e' \in \mathcal{E}(\mathsf{v}), \\ 
     A_\mathsf e \, \mathrm{d}_s \hat{u}_\mathsf e (\mathsf{v}) & = 0, && \forall \,  \mathsf{v} \in \mathcal{V}_{\partial}, \, \mathsf e \in \mathcal{E}(\mathsf{v}).  \label{eq:bc_network_2}
\end{align} 
To summarise, the 3D-1D network model consists of \eqref{eq:3D_equation_network}-\eqref{eq:network_1d_eq} with boundary conditions \eqref{eq:b_c_network1}-\eqref{eq:bc_network_2}. The above model can also be found in \cite[Section 2.5]{laurino2019derivation}. We now introduce a dG formulation for this model.
\subsection{DG for the 3D-1D network model} For each $\mathsf e \in \mathcal E$,  we denote by $h_\mathsf e$ the length of the edge $\mathsf e$ and  we introduce a mesh and a space $\mathbb V_h^\mathsf e$ of degree $k_\mathsf e$ similar to \eqref{eq:dg_space_1D}. Then, we define the broken polynomial space 
\[ \mathbb{V}_h^{\mathcal{G}} = \{\hat v_h: \,\,\, \hat v_h|_\mathsf e = \hat v_{\mathsf e,h} \in \mathbb V_h^\mathsf e \}. \]
We will use a hybridization technique to handle the values of the discrete solution at the bifurcation points. Thus, we define 
\begin{equation}
\mathbb{V}_h^{\mathcal{B}} = \{ 
\tilde{w}_h = (\tilde{w}_{\mathsf{v},h})_{\mathsf{v}\in\mathcal{B}},  \,\,\, \sum_{\mathsf{v} \in \mathcal{B}} \tilde{w}_{\mathsf{v},h}^2 < \infty \}. 
\end{equation}
 We now define the form $b_\mathsf v: (\mathbb V_h^{\Omega} \times \mathbb{V}_h^{\mathcal{G}} \times \mathbb{V}_h^{\mathcal{B}})^2 \rightarrow \mathbb R$ which enforces conditions at the bifurcation points, see Remark \ref{remark:local_conservation}. For $\mathsf{v} \in \mathcal{B}$, define 
\begin{multline}
    b_{\mathsf{v}}( (u_h, \hat{ u}_h, \tilde{ u}_h), (w_h, \hat{w}_h, \tilde{w}_h)) =   \sum_{\mathsf e \in \mathcal{E}(\mathsf{v})}A_\mathsf e \,  \mathrm d_s \hat{u}_{\mathsf e,h} (\mathsf{v}) n_ \mathsf e(\mathsf{v}) 
    \, (
    \hat{w}_{\mathsf e,h}(\mathsf{v}) - \tilde{w}_{\mathsf{v},h})\\
    +\sum_{\mathsf e \in \mathcal{E}(\mathsf{v})} A_\mathsf e \,  \mathrm d_s \hat{w}_{\mathsf e,h} (\mathsf{v}) n_\mathsf e(\mathsf{v}) 
    \, (
    \hat{u}_{\mathsf e,h}(\mathsf{v}) - \tilde{u}_{\mathsf{v},h})  +  \sum_{\mathsf e \in \mathcal{E}(\mathsf{v})}  \frac{\sigma_{\mathsf{v}}}{h_{\mathsf e}} (\hat{u}_{\mathsf e,h}(\mathsf{v}) - \tilde{u}_{\mathsf{v},h}) \,  (
    \hat{w}_{\mathsf e,h}(\mathsf{v}) - \tilde{w}_{\mathsf{v},h}) .
\end{multline}
The full dG formulation reads as follows. Find $(u_h, \hat{{u}}_h, \tilde{u}_h) \in \mathbb V_h^{\Omega} \times \mathbb{V}_h^{\mathcal{G}} \times \mathbb{V}_h^{\mathcal{B}} $ such that for all $(w_h, \hat{w}_h, \tilde{w}_h) \in \mathbb V_h^{\Omega} \times \mathbb{V}_h^{\mathcal{G}} \times \mathbb{V}_h^{\mathcal{B}}$, there holds
\begin{alignat}{2}
a_h (u_h, w_h) + \sum_{\mathsf e \in \mathcal{E}} b_\mathsf e(\overline{u}_{\mathsf e,h} &  - \hat{u}_{\mathsf e,h}, \overline{w}_{\mathsf e,h}) = (f,w_h), \label{eq:dG_network_0}\\ 
\sum_{\mathsf e \in \mathcal{E}} a_{\mathsf e,h}(\hat{u}_{\mathsf e,h}, \hat{w}_{\mathsf e,h}) + \sum_{\mathsf e \in \mathcal{E}} &b_\mathsf e(\hat{u}_{\mathsf e,h} - \overline{u}_{\mathsf e,h}, \hat{w}_{\mathsf e,h}) \label{eq:dG_network_1} \\ &   + \sum_{\mathsf{v} \in \mathcal{B}} b_{\mathsf{v}}( (u_h, \hat{ u}_h, \tilde{ u}_h), (w_h, \hat{ w}_h, \tilde{ w}_h)) = \sum_{\mathsf e \in \mathcal{E}} (\hat{f}_\mathsf e, \hat{w}_{\mathsf e,h})_{ L_{A_\mathsf e}^2(\mathsf e) }. \nonumber
\end{alignat}
 In the scheme above, the form $a_h$ is the same one defined by \eqref{eq:dg_bilinear_form} and the forms $a_{\mathsf e,h}$ and $b_\mathsf e$ correspond to the forms $a_{\Lambda,h}$ and $b_\Lambda$
with $\Lambda =  \mathsf e$. For instance, we  write
\[
b_\mathsf e(\hat v, \hat w) = (\xi_\mathsf e \hat v, \hat w)_{L_{P_\mathsf e}^2(\mathsf e)}, \quad
\forall \hat v, \hat w \in L_{P_\mathsf e}^2(\mathsf e).
  \]
\begin{remark}[Bifurcation conditions]\label{remark:local_conservation} For a given $\mathsf{v} \in \mathcal{B}$, let $\tilde w_h \in \mathbb V_h^{\mathcal{B}}$ be such that $\tilde w_{\mathsf{v},h}  = 1$ and zero otherwise. Choosing $(w_h, \hat w_h, \tilde w_h) = (0 , 0, \tilde w_h)$ in \eqref{eq:dG_network_1} yields: 
\begin{equation}
\sum_{\mathsf e \in \mathcal{E}(\mathsf{v})} A_\mathsf e \,  \mathrm{d}_s \hat u_{\mathsf e,h}(\mathsf{v}) n_\mathsf e(\mathsf{v}) + \sum_{\mathsf e \in \mathcal{E}(\mathsf{v})}
\frac{\sigma_{\mathsf{v}} }{h_\mathsf e} (\hat u_{\mathsf e,h}(\mathsf{v}) - \tilde u_{\mathsf{v},h})  = 0 , \quad \forall \mathsf{v}\in \mathcal{B}. 
\end{equation}
That is, up to jump terms, the discrete dG scheme locally conserves the fluxes, see \eqref{eq:b_c_network1},  at each bifurcation point. 
 \end{remark}
\begin{lemma}[Well--posedness]
There exists a unique solution for the problem given in \eqref{eq:dG_network_0} and \eqref{eq:dG_network_1}.
\end{lemma}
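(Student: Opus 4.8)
The plan is to exploit that \eqref{eq:dG_network_0}--\eqref{eq:dG_network_1} is a square linear system posed over the finite-dimensional space $\mathbb V_h^{\Omega}\times\mathbb V_h^{\mathcal G}\times\mathbb V_h^{\mathcal B}$, so that existence follows from uniqueness exactly as in Lemma~\ref{lemma:well_posedness_1}. I would therefore set $f=0$ and $\hat f=0$ and show that the only solution is trivial. The natural test is to take $w_h=u_h$ in \eqref{eq:dG_network_0}, to take $(\hat w_h,\tilde w_h)=(\hat u_h,\tilde u_h)$ in \eqref{eq:dG_network_1}, and to add the two resulting identities.

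Upon adding, the two coupling contributions telescope in the now-familiar way. Since $b_\mathsf e(\hat v,\hat w)=(\xi_\mathsf e \hat v,\hat w)_{L^2_{P_\mathsf e}(\mathsf e)}$, one has
\[
\sum_{\mathsf e\in\mathcal E}\Big(b_\mathsf e(\overline u_{\mathsf e,h}-\hat u_{\mathsf e,h},\overline u_{\mathsf e,h})+b_\mathsf e(\hat u_{\mathsf e,h}-\overline u_{\mathsf e,h},\hat u_{\mathsf e,h})\Big)=\sum_{\mathsf e\in\mathcal E}\xi_\mathsf e\,\|\overline u_{\mathsf e,h}-\hat u_{\mathsf e,h}\|^2_{L^2_{P_\mathsf e}(\mathsf e)}\geq 0,
\]
so the coupling is coercive on the difference $\overline u_{\mathsf e,h}-\hat u_{\mathsf e,h}$. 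By the same argument as in Lemma~\ref{lemma:coercivity}, for suitably chosen penalties there are constants with $a_h(u_h,u_h)\geq C_1\|u_h\|_{\dgg}^2$ and, on each edge, $a_{\mathsf e,h}(\hat u_{\mathsf e,h},\hat u_{\mathsf e,h})\geq C_2|\hat u_{\mathsf e,h}|^2_{\dgl}$ (the seminorm \eqref{eq:def_dg_e} read with $\Lambda=\mathsf e$, controlling the element gradients and interior-node jumps of the edge).

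The delicate term is $\sum_{\mathsf v\in\mathcal B} b_\mathsf v((u_h,\hat u_h,\tilde u_h),(u_h,\hat u_h,\tilde u_h))$. Here the two consistency contributions do \emph{not} cancel when tested against the solution itself; they instead add to
\[
2\sum_{\mathsf v\in\mathcal B}\sum_{\mathsf e\in\mathcal E(\mathsf v)}A_\mathsf e\,\mathrm d_s\hat u_{\mathsf e,h}(\mathsf v)\,n_\mathsf e(\mathsf v)\,\big(\hat u_{\mathsf e,h}(\mathsf v)-\tilde u_{\mathsf v,h}\big)+\sum_{\mathsf v\in\mathcal B}\sum_{\mathsf e\in\mathcal E(\mathsf v)}\frac{\sigma_{\mathsf v}}{h_\mathsf e}\big(\hat u_{\mathsf e,h}(\mathsf v)-\tilde u_{\mathsf v,h}\big)^2.
\]
The hard part will be absorbing the first, sign-indefinite sum. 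I would bound each nodal derivative by the polynomial inverse/trace estimate $|\mathrm d_s\hat u_{\mathsf e,h}(\mathsf v)|\lesssim h_\mathsf e^{-1/2}\|\mathrm d_s\hat u_{\mathsf e,h}\|_{L^2(\mathsf e)}$ (and $A_\mathsf e\lesssim 1$), then apply Young's inequality to split the product into a part proportional to $\|\mathrm d_s\hat u_{\mathsf e,h}\|^2_{L^2(\mathsf e)}$ and a part proportional to $h_\mathsf e^{-1}\big(\hat u_{\mathsf e,h}(\mathsf v)-\tilde u_{\mathsf v,h}\big)^2$. Choosing the Young parameter small enough that the gradient part is absorbed by the edge coercivity $C_2|\hat u_{\mathsf e,h}|^2_{\dgl}$, and then $\sigma_\mathsf v$ large enough that the remaining penalty contribution stays nonnegative, yields the combined coercivity estimate. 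As in the interior-penalty theory, this lower bound on $\sigma_\mathsf v$ is needed only for the symmetric variant; in the nonsymmetric choice the indefinite sum disappears entirely.

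Collecting all contributions, the homogeneous problem forces
\[
\|u_h\|_{\dgg}^2+\sum_{\mathsf e\in\mathcal E}|\hat u_{\mathsf e,h}|^2_{\dgl}+\sum_{\mathsf e\in\mathcal E}\|\overline u_{\mathsf e,h}-\hat u_{\mathsf e,h}\|^2_{L^2_{P_\mathsf e}(\mathsf e)}+\sum_{\mathsf v\in\mathcal B}\sum_{\mathsf e\in\mathcal E(\mathsf v)}\frac{1}{h_\mathsf e}\big(\hat u_{\mathsf e,h}(\mathsf v)-\tilde u_{\mathsf v,h}\big)^2=0.
\]
Since $\|\cdot\|_{\dgg}$ is a norm on $H^1(\meshg)$, the first term gives $u_h=0$, whence $\overline u_{\mathsf e,h}=0$ on every edge; the vanishing coupling term then forces $\hat u_{\mathsf e,h}=0$ on each $\mathsf e$ (as $P_\mathsf e>0$), and finally the vanishing bifurcation penalty gives $\tilde u_{\mathsf v,h}=\hat u_{\mathsf e,h}(\mathsf v)=0$ at every $\mathsf v\in\mathcal B$. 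The kernel is thus trivial and the scheme is well posed. I expect the main obstacle to be precisely the treatment of the non-cancelling bifurcation consistency terms, which is where the restriction to suitably large $\sigma_\mathsf v$ (or the nonsymmetric variant) becomes essential.
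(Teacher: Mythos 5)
Your proof is correct and follows essentially the same route as the paper: test the homogeneous system with the solution itself, use coercivity of $a_h$ and $a_{\mathsf e,h}$, observe that the coupling terms combine into $\sum_{\mathsf e}\xi_\mathsf e\|\overline u_{\mathsf e,h}-\hat u_{\mathsf e,h}\|^2_{L^2_{P_\mathsf e}(\mathsf e)}$, absorb the sign-indefinite bifurcation consistency terms via a nodal inverse/trace estimate, Young's inequality, and sufficiently large $\sigma_\mathsf v$, and then conclude uniqueness (hence existence, by finite dimensionality) since the resulting lower bound is a norm. You in fact spell out the steps the paper dismisses as ``standard'' (the trace-plus-Young absorption and the verification that the coercivity bound's right-hand side vanishing forces the trivial solution).
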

\begin{proof}
For any $(u_h, \hat u_h , \tilde u_h) \in \mathbb V_h^{\Omega} \times \mathbb{V}_h^{\mathcal{G}} \times \mathbb{V}_h^{\mathcal{B}} $,  let 
\begin{multline}  \nonumber 
    \mathcal{X} = a_h (u_h, u_h) +   \sum_{\mathsf e \in \mathcal{E}} a_{\mathsf e,h}(\hat{u}_{\mathsf e,h}, \hat{u}_{\mathsf e,h}) \\ +  \sum_{e \in \mathcal E} b_\mathsf e (\overline{u}_{\mathsf e,h} - \hat u_{\mathsf e,h} , \overline{u}_{\mathsf e,h} - \hat u_{\mathsf e,h})  +   \sum_{\mathsf{v} \in \mathcal{B}} b_{\mathsf{v}}( (u_h, \hat{ u}_h, \tilde{ u}_h), (u_h, \hat{u}_h, \tilde{u}_h)). 
\end{multline}
It suffices to show that   
\begin{align}\label{eq:coercivity_graph}
  \mathcal{X}  \gtrsim \|u_h\|_{\dgg}^2 + \sum_{\mathsf e \in \mathcal{E}} ( |\hat u_h |^2_{\mathbb{V}_h^{\mathsf e} } + \| \overline{u}_{\mathsf e,h} - \hat{u}_{e,h} \|^2_{L^2_{P}(\mathsf e)}) + \sum_{\mathsf{v} \in \mathcal{B} } \sum_{\mathsf e \in \mathcal{E}(\mathsf{v})}  \frac{\sigma_\mathsf e}{h_{\mathsf e}} (\hat{u}_{\mathsf e,h}(\mathsf v) - \tilde{u}_{\mathsf{v},h})^2,
\end{align}
since the right hand side above defines a norm. Here, $|\cdot |_{\mathbb{V}_h^{\mathsf e}}$ is defined in the same way as \eqref{eq:def_dg_e}. From application of 
trace estimates, it is standard to show that for $\sigma_{\mathsf{v}} $ large enough, there exists a constant $C_3>0$ such that 
    \begin{align*}
\sum_{\mathsf{v} \in \mathcal{B}} b_{\mathsf{v}}( (u_h, \hat{ u}_h, \tilde{ u}_h), (u_h, \hat{ u}_h, \tilde{ u}_h)) 
+ \frac{1}{2}\sum_{\mathsf e \in \mathcal{E}} C_\mathsf e |\hat u_h|^2_{\mathcal{T}_{\mathsf e}^h }\geq  \sum_{\mathsf{v} \in \mathcal{B} } \sum_{\mathsf e \in \mathcal{E}(\mathsf{v})}  \frac{C_3}{h_{\mathsf e}} (\hat{u}_{\mathsf e,h}(\mathsf{v}) - \tilde{u}_{\mathsf{v},h})^2, 
    \end{align*}
where $C_\mathsf e$ is the coercivity constant of $a_{\mathsf e,h}$, similar to \eqref{eq:coercivity_local}. It then follows that 
\begin{align*}
    \sum_{\mathsf e \in \mathcal{E}} a_{\mathsf e,h}(\hat{u}_{\mathsf e,h}, \hat{u}_{ \mathsf e,h}) & +  \sum_{\mathsf{v} \in \mathcal{B}} b_{\mathsf{v}}(u_h, \hat{ u}_h, \tilde{ u}_h), (u_h, \hat{ u}_h, \tilde{ u}_h))  \\ & \geq \frac{1}{2}\sum_{\mathsf e \in \mathcal{E}} C_\mathsf e  |\hat u_h|^2_{\mathcal{T}_{\mathsf e}^h }  +  \sum_{\mathsf{v} \in \mathcal{B} } \sum_{\mathsf e \in \mathcal{E}(\mathsf{v})}  \frac{C_3}{h_{\mathsf e}} (\hat{u}_{\mathsf e,h}(\mathsf{v}) - \tilde{u}_{\mathsf{v},h})^2.
\end{align*}
From here, we use the coercivity results \eqref{eq:coercivity_local}   and the definition of $b_\mathsf e$ to conclude that \eqref{eq:coercivity_graph} holds. We omit the details for brevity.
\end{proof}
\section{Numerical results} \label{sec:numerics}
\subsection{Manufactured solutions with one vessel in a 3D domain} In this first example, we consider manufactured solutions and compute error rates. Let $\Omega = (-0.5,0.5)^3$ contain $\Lambda = \{ (0,0,z), z \in (-0.5,0.5) \}$ with a surrounding cylinder of constant radius $R = 0.05 $. Denoting by $r$ the distance to the line $\Lambda$, the exact solutions are 
\begin{equation}
    u = \begin{cases}
    \frac{\xi}{\xi+1} (1 - R\ln (\frac{r}{R}) ) \hat u ,  & r > R,  \\ 
     \frac{\xi}{\xi+1} \hat u  ,  & r \leq R .
\end{cases} \label{eq:exact_sol},  \quad \mathrm{and} \quad \hat u = \sin(\pi z) + 2. 
\end{equation} 
The above 3D solution is obtained from the observation that \cite[eq. 40]{engquist2005discretization}, see also \cite{koppl2018mathematical}: 
\begin{equation}
\int_{\Omega} - ( \partial_{xx} \,  u + \partial_{yy} u) \,\, v = \int_{\Gamma} \frac{\xi}{\xi + 1} \hat u \, v  = -  \int_{\Lambda} \xi \, P \, (\overline{u} - \hat u ) \overline v .   
\end{equation}
We set $\xi = 1$, and we modify the source terms $f, \hat f$ and the boundary conditions so that the equations are satisfied.
The parameters are set to  $\epsilon_1 = \epsilon_2 = -1$, $k_1=k_2= 1$, and $\sigma_\Omega = \sigma_\Lambda = 30$.
For all our examples, we use the FEniCS finite element framework \cite{alnaes2015fenics,logg2012automated} and the $\mathrm{(FEniCS)}_{ii}$ module \cite{kuchta2020assembly}. 
We compute the solution  
 $(u_h, \hat u_h)$,  the $L^2$ and the $H^1$ norms of the  errors  $e_h = u - u_h$ and $\hat e_h = \hat u - \hat u_h$ on a family of uniform meshes created by FEniCS ``BoxMesh'' with $6N^3$ number of elements. The results and the rates of convergence are reported in Tables \ref{tab:first} and \ref{tab:second} for the 3D and the 1D approximation respectively.
\begin{table}[H]
    \centering
    \begin{tabular}{|c|c|c|c|c|}
       \hline  $N$  & $\|e_h\|_{H^1(\Omega)}$ & rate &  $\|e_h\|_{L^2(\Omega)}$& rate   \\
       \hline \hline
4&  2.313e-01 & -  & 1.562e-02 & -\\  
\hline 
8 & 1.300e-01 & 0.832 & 4.714e-03 &  1.729 \\  \hline 
16 & 8.323e-02 &0.643  & 1.457e-03 &  1.694 \\  \hline 
32 & 5.247e-02 & 0.666& 4.345e-04 &  1.746 \\ \hline 
64 & 3.292e-02 & 0.673  &1.171e-04 &  1.891 \\ \hline 
\end{tabular}
    \caption{$L^2$ and $H^1$ errors and rates between the 3D exact solution \eqref{eq:exact_sol} and the computed solution on a family of  uniform meshes. }
\label{tab:first}
\end{table}

\begin{table}[H]
    \centering
    \begin{tabular}{|c|c|c|c|c|}
       \hline  $N$  & $\|\hat e_h \|_{H^1(\Lambda )}$ & rate &  $\|\hat e_h\|_{L^2(\Lambda)}$& rate   \\
       \hline \hline
4   & 5.008e-01 & -   &3.663e-02 & -  \\ \hline 
 8  & 2.519e-01&  0.992 & 1.779e-02 & 1.042  \\  \hline 
 16 & 1.262e-01 & 0.998  & 7.832e-03 & 1.184 \\ \hline 
 32  & 6.308e-02 &1.000  &  3.374e-03 &1.215 \\  \hline 
 64 & 3.150e-02 & 1.002 & 8.293e-04 & 2.024 \\   \hline 
\end{tabular}
    \caption{$L^2$ and $H^1$ errors and rates between the 1D exact solution \eqref{eq:exact_sol} and the computed solution on a family of  uniform meshes.}
\label{tab:second}
\end{table}
\subsection{Manufactured solution for a vessel network.}  Next, we verify the convergence of the dG scheme for the $1\mathrm{D}$ network model. Precisely, in this example, we now consider only the Poisson problem posed on the network $-\Delta \hat u = \hat f$ on $\mathcal{G}$ complemented  with \eqref{eq:b_c_network1} and   homogeneous
  Dirichlet conditions on $\mathcal{V}_{\partial}$, and we do not solve for a 3D solution. The dG scheme for this 1D diffusion problem problem is given in \eqref{eq:dG_network_1} with $u_h =v_h = \xi = 0$ and the penalty parameters set as $\sigma_\mathsf e= \sigma_\mathsf v =  10$. We consider the network embedded in $\mathbb{R}^2$
  shown in \Cref{tab:hdg_1d} which includes 3 bifurcations, i.e. $\lvert \mathcal{B}  \rvert = 3$,
  located at $\mathsf v_1=(0, 1)$, $ \mathsf v_2= (-1, 2)$, $\mathsf v_3=(1, 2)$ while
  the remaining nodes are placed at $\mathsf v_0=(0, 0)$, 
  $\mathsf v_4=(-1.5, 3)$, $ \mathsf v_5=(-0.5, 3)$, $ \mathsf v_6=(0.5, 3)$, $ \mathsf v_7=(1.5, 3)$. Given $\mathcal{G}$,
  we consider the following  solution and data 
\begin{equation}\label{eq:network_hdg_mms}
    \hat u = \begin{cases}
      y + \cos 2\pi y,  & \!\!(x, y)\in \mathsf e_0\\
      2 + \frac{1}{2}\sqrt{2}(y-1),  & \!\!(x, y) \in \mathsf e_{1\leq i \leq 2}\\
      2 + \frac{1}{2}\sqrt{2} + \frac{1}{8}\sqrt{5}(y-2), & \!\!(x, y)\in \mathsf e_{3\leq i \leq 6}\\            
    \end{cases},
    \,\, 
    \hat f = \begin{cases}
      4\pi^2\cos 2\pi y, & \!\!(x, y)\in \mathsf e_0 \\
      0,                  & \!\!(x, y)\in \mathsf e_{i\neq 0}\\
    \end{cases}.
  \end{equation}    
  Using \eqref{eq:network_hdg_mms} and the dG scheme with linear polynomials,
  \Cref{tab:hdg_1d} confirms the first order convergence of the method.  \begin{table}[H]
    \begin{minipage}{0.45\textwidth}
      \includegraphics[width=0.8\textwidth]{./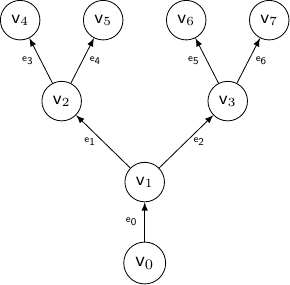}
    \end{minipage}
    \begin{minipage}{0.54\textwidth}
      \scriptsize{
\begin{tabular}{c|cc|cc}
\hline
$h$ &    $\lVert (\hat{e}_h, \tilde{e}_h)\rVert_{\mathbb{V}^{\mathcal{G}}_h\times \mathbb{V}^{\mathcal{B}}_h}$ & rate 
& $\max_{\mathsf{v}\in\mathcal{B}}\lvert j_h(\mathsf{v}) \rvert $ & rate\\[0.5em]
\hline
 5.00e-1 & 1.420 &  --   &  1.242 & -- \\  
 2.50e-1 & 9.705e-1 &  5.49 &  4.683e-1 &  1.41 \\
 1.25e-1 & 4.268e-1 &  1.19 &  1.759e-1 &  1.41 \\
 6.25e-2 & 1.809e-1 &  1.24 &  7.807e-2 &  1.17 \\
 3.13e-2 & 7.896e-2 &  1.18 &  3.770e-2 &  1.05 \\
 1.56e-2 & 3.698e-2 &  1.11 &  1.868e-2 &  1.01 \\
 7.81e-3 & 1.771e-2 &  1.06 &  9.320e-3 &  1.00 \\
 3.91e-3 & 8.656e-3 &  1.03 &  4.657e-3 &  1.00 \\
 1.95e-3 & 4.277e-3 &  1.02 &  2.328e-3 &  1.00 \\
 9.77e-4 & 2.126e-3 &  1.01 &  1.164e-3 &  1.00 \\
 4.88e-4 & 1.060e-3 &  1.00 &  5.821e-4 &  1.00 \\
 2.44e-4 & 5.291e-4 &  1.00 &  2.910e-4 &  1.00 \\
 \hline
\end{tabular}
}
    \end{minipage}
    \caption{Error convergence and flux conservation of the DG scheme defined as part of
      \eqref{eq:dG_network_0}-\eqref{eq:dG_network_1} and applied to the
      standalone diffusion problem on the network shown to the
      left. Here $\hat{e}_h = u - \hat{u}_h$, $\tilde{e}_h = u - \tilde{u}_h$
      with $u$ the exact solution \eqref{eq:network_hdg_mms}. The norm is defined in \eqref{eq:table_norm }. Following
      Remark \ref{remark:local_conservation},  we let
      $j_h(\mathsf{v}) = \sum_{\mathsf e \in \mathcal{E}(\mathsf{v})} \mathrm{d}_s \hat u_{\mathsf e,h}(\mathsf{v}) n_\mathsf e(\mathsf{v})$.
      We set the polynomial degree $k_\mathsf e  = 1$. 
    }
    \label{tab:hdg_1d}
  \end{table}
The norm in \Cref{tab:hdg_1d} is given by 
\begin{equation} \label{eq:table_norm }
\lVert (\hat{e}_h, \tilde{e}_h)\rVert_{\mathbb{V}^{\mathcal{G}}_h\times \mathbb{V}^{\mathcal{B}}_h}^2  = \sum_{\mathsf  e \in \mathcal E} \|\hat{e}_h\|_{\mathbb V_{\mathsf e} ^h}^2 +   \sum_{\mathsf{v} \in \mathcal{B} } \sum_{\mathsf e \in \mathcal{E}(\mathsf{v})}  \frac{\sigma_\mathsf e}{h_{\mathsf e}} (\hat{u}_{\mathsf e,h}(v) - \tilde{u}_{\mathsf{v},h})^2,
\end{equation}
    where $\|\hat{e}_h\|_{\mathbb V_{\mathsf e}^h}^2$ is a slight modification to \eqref{eq:def_dg_e} to also  include boundary terms. 
    
\subsection{Coupled 3D-1D simulation in realistic networks.} In \Cref{fig:silvie}, we finally illustrate the capabilities of our dG scheme to model tissue micro-circulation in a realistic setting. To this end, we utilize
the data set \cite{goirand2021network} which includes vasculature of a
  $1\,\text{mm}^3$ of a mouse cortex, and we  let $\mathcal{G}$ be defined in terms
  of arteries and venules of this network, leaving out the capillaries. The vessel radius in the network ranges approximately from $5\,\mu\text{m}$ to $35\,\mu\text{m}$. 
  The $3$D domain $\Omega$ is then defined as a bounding box of $\mathcal{G}$.
  Upon discretization, $\text{dim}\mathbb{V}^{\Omega}_h=196608$, $\text{dim}\mathbb{V}^{\mathcal{G}}_h=11196$,  and $\text{dim}\mathbb{V}^{\mathcal B}_h=57$.
  The solution fields obtained from \eqref{eq:pde_2} considered with $f=0$, $\hat{f}=1$ and homogeneous Dirichlet and Neumann conditions for $u$ and $\hat{u}$ respectively are shown in \Cref{fig:silvie}.
  \begin{figure}[H]
    \centering
\includegraphics[width=0.5\textwidth]{./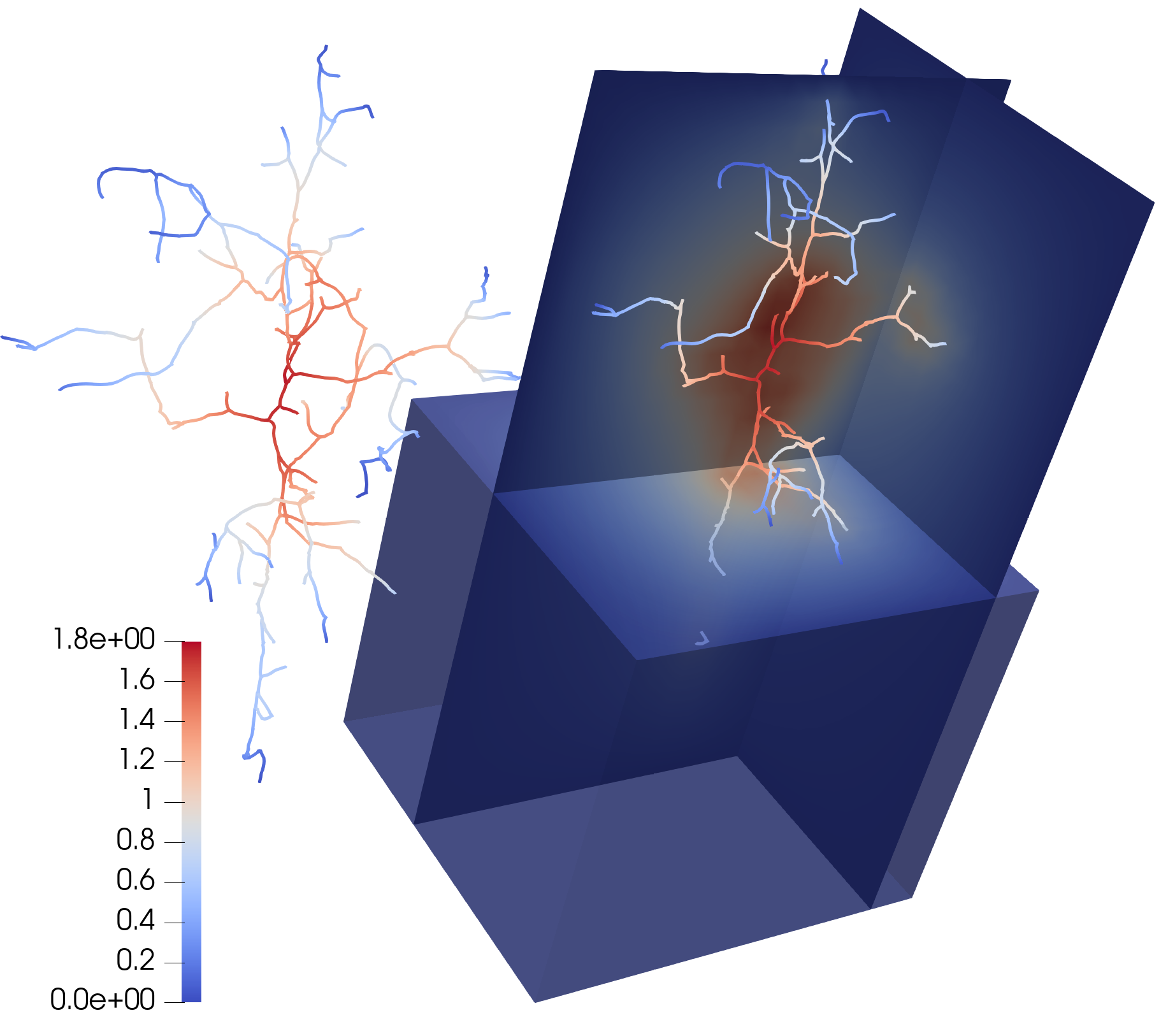}
    \caption{
      Numerical solutions $\hat{u}_h$ and $u_h$ due to the dG scheme 
      \eqref{eq:dG_network_0}-\eqref{eq:dG_network_1}
      applied to 
      the coupled $3$D-$1$D problem \eqref{eq:3D_equation_network}--\eqref{eq:network_1d_eq} with bifurcation conditions \eqref{eq:b_c_network1} 
      considered on a realistic network taken from 
 \cite{goirand2021network}.
    }    
    \label{fig:silvie}
  \end{figure}

\section{Conclusions} \label{sec:conclusion}
 Interior penalty discontinuous Galerkin methods are introduced for coupled $3$D-$1$D problems. These models span several areas of applications such as modeling flow and transport in vascularized tissue. We analyze dG approximations for the steady state problem and a backward Euler dG method for the time dependent problem.  Our analysis is valid under minimal assumptions on the regularity of the solution and on the mesh. Recovering almost optimal rates for graded meshes is also shown, under sufficient regularity assumptions. Further, we propose a novel dG method with hybridization for a network of vessels in a 3D surrounding. The method, up to jump terms, locally conserves mass at bifurcation points. Numerical results demonstrate our error analysis.   
\bibliographystyle{plain}
\bibliography{references}

\begin{thebibliography}{10}

\bibitem{alnaes2015fenics}
Martin Aln{\ae}s, Jan Blechta, Johan Hake, August Johansson, Benjamin Kehlet,
  Anders Logg, Chris Richardson, Johannes Ring, Marie~E Rognes, and Garth~N
  Wells.
\newblock The {FEniCS} project version 1.5.
\newblock {\em Archive of Numerical Software}, 3(100), 2015.

\bibitem{apel1996graded}
Thomas Apel, Anna-Margarete S{\"a}ndig, and John~R Whiteman.
\newblock Graded mesh refinement and error estimates for finite element
  solutions of elliptic boundary value problems in non-smooth domains.
\newblock {\em Mathematical Methods in the Applied Sciences}, 19(1):63--85,
  1996.

\bibitem{cattaneo2014computational}
Laura Cattaneo and Paolo Zunino.
\newblock A computational model of drug delivery through microcirculation to
  compare different tumor treatments.
\newblock {\em International Journal for Numerical Methods in Biomedical
  Engineering}, 30(11):1347--1371, 2014.

\bibitem{d2007multiscale}
Carlo D'Angelo.
\newblock Multiscale modelling of metabolism and transport phenomena in living
  tissues.
\newblock Technical report, EPFL, 2007.

\bibitem{d2012finite}
Carlo D'Angelo.
\newblock Finite element approximation of elliptic problems with {D}irac
  measure terms in weighted spaces: applications to one-and three-dimensional
  coupled problems.
\newblock {\em SIAM Journal on Numerical Analysis}, 50(1):194--215, 2012.

\bibitem{d2008coupling}
Carlo D'angelo and Alfio Quarteroni.
\newblock On the coupling of {1D} and {3D} diffusion-reaction equations:
  application to tissue perfusion problems.
\newblock {\em Mathematical Models and Methods in Applied Sciences},
  18(08):1481--1504, 2008.

\bibitem{di2011mathematical}
Daniele~Antonio Di~Pietro and Alexandre Ern.
\newblock {\em Mathematical Aspects of Discontinuous Galerkin Methods},
  volume~69.
\newblock Springer Science \& Business Media, 2011.

\bibitem{drelichman2020weighted}
Irene Drelichman, Ricardo~G Dur{\'a}n, and Ignacio Ojea.
\newblock A weighted setting for the numerical approximation of the {P}oisson
  problem with singular sources.
\newblock {\em SIAM Journal on Numerical Analysis}, 58(1):590--606, 2020.

\bibitem{egger2023hybrid}
Herbert Egger and Nora Philippi.
\newblock A hybrid-d{G} method for singularly perturbed convection-diffusion
  equations on pipe networks.
\newblock {\em ESAIM: Mathematical Modelling and Numerical Analysis},
  57(4):2077--2095, 2023.

\bibitem{engquist2005discretization}
Bj{\"o}rn Engquist, Anna-Karin Tornberg, and Richard Tsai.
\newblock Discretization of {D}irac delta functions in level set methods.
\newblock {\em Journal of Computational Physics}, 207(1):28--51, 2005.

\bibitem{girault2016strong}
Vivette Girault, Jizhou Li, and Beatrice Rivi{\`e}re.
\newblock Strong convergence of discrete {DG} solutions of the heat equation.
\newblock {\em Journal of Numerical Mathematics}, 24(4):235--252, 2016.

\bibitem{gjerde2020singularity}
Ingeborg~G Gjerde, Kundan Kumar, and Jan~M Nordbotten.
\newblock A singularity removal method for coupled {1D}--{3D} flow models.
\newblock {\em Computational Geosciences}, 24(2):443--457, 2020.

\bibitem{gjerde2021mixed}
Ingeborg~G Gjerde, Kundan Kumar, and Jan~M Nordbotten.
\newblock A mixed approach to the {P}oisson problem with line sources.
\newblock {\em SIAM Journal on Numerical Analysis}, 59(2):1117--1139, 2021.

\bibitem{goirand2021network}
Florian Goirand, Tanguy Le~Borgne, and Sylvie Lorthois.
\newblock Network-driven anomalous transport is a fundamental component of
  brain microvascular dysfunction.
\newblock {\em Nature Communications}, 12(1):7295, 2021.

\bibitem{gudi2010new}
Thirupathi Gudi.
\newblock A new error analysis for discontinuous finite element methods for
  linear elliptic problems.
\newblock {\em Mathematics of Computation}, 79(272):2169--2189, 2010.

\bibitem{karakashian2003posteriori}
Ohannes~A Karakashian and Frederic Pascal.
\newblock A posteriori error estimates for a discontinuous {G}alerkin
  approximation of second-order elliptic problems.
\newblock {\em SIAM Journal on Numerical Analysis}, 41(6):2374--2399, 2003.

\bibitem{koppl2014optimal}
T~K{\"o}ppl and Barbara Wohlmuth.
\newblock Optimal a priori error estimates for an elliptic problem with {D}irac
  right-hand side.
\newblock {\em SIAM Journal on Numerical Analysis}, 52(4):1753--1769, 2014.

\bibitem{koppl2018mathematical}
Tobias K{\"o}ppl, Ettore Vidotto, Barbara Wohlmuth, and Paolo Zunino.
\newblock Mathematical modeling, analysis and numerical approximation of
  second-order elliptic problems with inclusions.
\newblock {\em Mathematical Models and Methods in Applied Sciences},
  28(05):953--978, 2018.

\bibitem{kuchta2020assembly}
Miroslav Kuchta.
\newblock Assembly of multiscale linear {PDE} operators.
\newblock In {\em Numerical Mathematics and Advanced Applications ENUMATH 2019:
  European Conference, Egmond aan Zee, The Netherlands, September 30-October
  4}, pages 641--650. Springer, 2020.

\bibitem{doi:10.1137/20M1329664}
Miroslav Kuchta, Federica Laurino, Kent-Andre Mardal, and Paolo Zunino.
\newblock Analysis and approximation of mixed-dimensional {PDEs} on {3D}-{1D}
  domains coupled with {L}agrange multipliers.
\newblock {\em SIAM Journal on Numerical Analysis}, 59(1):558--582, 2021.

\bibitem{laurino2019derivation}
Federica Laurino and Paolo Zunino.
\newblock Derivation and analysis of coupled {PDE}s on manifolds with high
  dimensionality gap arising from topological model reduction.
\newblock {\em ESAIM: Mathematical Modelling and Numerical Analysis},
  53(6):2047--2080, 2019.

\bibitem{logg2012automated}
Anders Logg, Kent-Andre Mardal, and Garth Wells.
\newblock {\em Automated solution of differential equations by the finite
  element method: The FEniCS book}, volume~84.
\newblock Springer Science \& Business Media, 2012.

\bibitem{malenica2018groundwater}
Luka Malenica, Hrvoje Gotovac, Grgo Kamber, Srdjan Simunovic, Srikanth Allu,
  and Vladimir Divic.
\newblock Groundwater flow modeling in karst aquifers: Coupling 3{D} matrix and
  1{D} conduit flow via control volume isogeometric analysis—experimental
  verification with a 3{D} physical model.
\newblock {\em Water}, 10(12):1787, 2018.

\bibitem{masri2023discontinuous}
Rami Masri, Boqian Shen, and Beatrice Riviere.
\newblock Discontinuous {G}alerkin approximations to elliptic and parabolic
  problems with a {D}irac line source.
\newblock {\em ESAIM: Mathematical Modelling and Numerical Analysis},
  57(2):585--620, 2023.

\bibitem{masri2023modelling}
Rami Masri, Marius Zeinhofer, Miroslav Kuchta, and Marie~E Rognes.
\newblock The modelling error in multi-dimensional time-dependent solute
  transport models.
\newblock {\em arXiv preprint arXiv:2303.17999}, 2023.

\bibitem{possenti2019numerical}
Luca Possenti, Giustina Casagrande, Simone Di~Gregorio, Paolo Zunino, and
  Maria~Laura Costantino.
\newblock Numerical simulations of the microvascular fluid balance with a
  non-linear model of the lymphatic system.
\newblock {\em Microvascular Research}, 122:101--110, 2019.

\bibitem{riviere2008discontinuous}
B{\'e}atrice Rivi{\`e}re.
\newblock {\em Discontinuous Galerkin methods for solving elliptic and
  parabolic equations: theory and implementation}.
\newblock SIAM, 2008.

\bibitem{rohan2018modeling}
Eduard Rohan, Vladim{\'\i}r Luke{\v{s}}, and Alena Jon{\'a}{\v{s}}ov{\'a}.
\newblock Modeling of the contrast-enhanced perfusion test in liver based on
  the multi-compartment flow in porous media.
\newblock {\em Journal of Mathematical Biology}, 77(2):421--454, 2018.

\bibitem{scott1990finite}
L~Ridgway Scott and Shangyou Zhang.
\newblock Finite element interpolation of nonsmooth functions satisfying
  boundary conditions.
\newblock {\em Mathematics of Computation}, 54(190):483--493, 1990.

\bibitem{verfurth1999review}
R{\"u}diger Verf{\"u}rth.
\newblock A review of a posteriori error estimation techniques for elasticity
  problems.
\newblock {\em Computer Methods in Applied Mechanics and Engineering},
  176(1-4):419--440, 1999.

\bibitem{vinje2021brain}
Vegard Vinje, Erik~NTP Bakker, and Marie~E Rognes.
\newblock Brain solute transport is more rapid in periarterial than perivenous
  spaces.
\newblock {\em Scientific Reports}, 11(1):1--11, 2021.

\bibitem{wu2019unfitted}
Haijun Wu and Yuanming Xiao.
\newblock An unfitted hp-interface penalty finite element method for elliptic
  interface problems.
\newblock {\em Journal of Computational Mathematics}, 37(3):316, 2019.

\end{thebibliography}
\end{document}